\newmdenv[fontcolor=red,linecolor=red,skipabove=12pt,skipbelow=12pt,
        innertopmargin=12pt,innerbottommargin=12pt]{note}
\def\A{\mathcal{A}}
\def\Q{\mathcal{Q}}
\def\R{\mathbb{R}}
\def\T{\mathcal{T}}
\def\DG{\mathrm{DG}}
\def\BR{\mathrm{BR}}
\def\k{\kappa}
\def\khat{\widehat{\kappa}}
\def\Qh{\mathsf{Q}_h}
\def\Ahat{\widehat{A}}
\def\Atilde{\widetilde{A}}
\def\ktilde{\widetilde{\k}}
\def\dotdot{\cdot\,,\cdot}
\def\tr{\intercal}
\def\lmax{\lambda_{\max}}
\def\lmin{\lambda_{\min}}
\def\card{\mathrm{card}}
\def\eq{\approx}
\def\llb{\llbracket}
\def\rrb{\rrbracket}
\def\interior#1{\accentset{\circ}{#1}}
\newtheorem{thm}{Theorem}
\newtheorem{lem}{Lemma}
\newtheorem{cor}{Corollary}
\newtheorem{prop}{Proposition}
\newtheorem{rem}{Remark}
\newdimen\imageheight
\title[Subspace correction preconditioners for DG methods with $hp$-refinement]{Uniform subspace correction preconditioners for discontinuous Galerkin methods with $hp$-refinement}
\author[Pazner and Kolev]{Will Pazner and Tzanio Kolev}
\address{Center for Applied Scientific Computing, Lawrence Livermore National Laboratory}
\begin{document}

\begin{abstract}
   In this paper, we develop subspace correction preconditioners for discontinuous Galerkin (DG) discretizations of elliptic problems with $hp$-refinement.
   These preconditioners are based on the decomposition of the DG finite element space into a conforming subspace, and a set of small nonconforming edge spaces.
   The conforming subspace is preconditioned using a matrix-free low-order refined technique, which in this work we extend to the $hp$-refinement context using a variational restriction approach.
   The condition number of the resulting linear system is independent of the granularity of the mesh $h$, and the degree of polynomial approximation $p$.
   The method is amenable to use with meshes of any degree of irregularity and arbitrary distribution of polynomial degrees.
   Numerical examples are shown on several test cases involving adaptively and randomly refined meshes, using both the symmetric interior penalty method and the second method of Bassi and Rebay (BR2).
\end{abstract}

\maketitle

\section{Introduction}

High-order discontinuous Galerkin (DG) methods have seen significant recent interest in a wide range of application areas \cite{Bassi2000,Wang2013,Cockburn2001}.
One advantageous feature of the DG method is its flexibility, allowing for the natural handling of irregular (nonconforming) meshes and variable polynomial degrees, thus making it well-suited for $hp$-adaptive refinement \cite{Wihler2003}.
However, one challenging aspect of $hp$-refined DG methods is the iterative solution of the resulting ill-conditioned system of linear equations.
Many past works have developed and analyzed solvers and preconditioners for DG discretizations of elliptic problems, including multigrid methods \cite{Gopalakrishnan2003,Dobrev2006}, and domain decomposition methods \cite{Chung2013,Antonietti2007,Antonietti2016}, among others.
However, much of the work on solvers for DG has focused on low-order elements, and some of the results hold only in the case of $h$-refinement alone.
In the case of general $hp$-refinement, the presence of hanging vertices, highly graded meshes, and variable polynomial degrees makes the construction of efficient preconditioners more challenging.

In this work, we take a subspace correction approach, where an effective preconditioner on the conforming subspace of the DG finite element space is combined with a simple smoother, also based on a space decomposition, defined in terms of Gauss--Lobatto nodal points.
Similar ideas were developed for low-order DG methods in \cite{Dobrev2006}, and for high-order DG methods on conforming meshes in \cite{Antonietti2016,Pazner2019a}.
In the case of conforming meshes or $h$-refinement only, a simple point Jacobi smoother is sufficient to obtain uniform bounds on the preconditioned system.
However, in the case of $hp$-refinement, the conforming subspace does not provide as good of an approximation to the DG finite element space (as quantified by a Jackson-type estimate), necessitating more powerful block smoothers, which take the form of block Jacobi methods.
These block smoothers correspond to a subspace decomposition that is generated using a simple algorithm, and which depends on the irregularity of the mesh and the distribution of polynomial degrees.
The resulting preconditioned system has condition number independent of the mesh size, polynomial degree, and penalty parameter.

Past work on solvers for $hp$-DG methods include multilevel methods and domain decomposition methods.
In \cite{Antonietti2010}, a class of domain decomposition preconditioners for $hp$-DG discretizations was analyzed, where the condition number of the resulting system is bounded by $\eta p^2 H/h$, where $H$ is the coarse mesh size, and $\eta$ is the DG penalty parameter.
A multilevel approach related to the current work was developed in \cite{Brix2008}, where a \textit{minimal} conforming subspace (corresponding to a low-order $H^1$ finite element space) was used on graded (1-irregular) meshes to obtain uniform bounds on the condition number of the preconditioned system.
This work was extended in \cite{Brix2014} using a low-order preconditioner that is defined on nested dyadic grids.
This is in contrast to the present work, where a low-order preconditioner is formed using a variational restriction strategy.
In \cite{Antonietti2015}, multigrid algorithms for $hp$-DG methods were considered, and the dependence of the convergence factor on the type of multigrid cycle and number of smoothing steps was studied.

The structure of this paper is as follows.
In Section \ref{sec:discretization}, we introduce the symmetric interior penalty DG discretization for the model problem, and enumerate several useful known results.
In Section \ref{sec:decomp}, we define a novel space decomposition that is analyzed within the abstract framework of additive Schwarz methods.
This analysis depends on approximation results for an Oswald-type averaging operator that maps from the DG finite element space to its conforming subspace.
An algorithm to generate the subspace decomposition based on the mesh irregularity is described.
At the end of Section \ref{sec:decomp} we also discuss a matrix-free preconditioners for the conforming problem based on a low-order refined methodology.
Finally, in Section \ref{sec:results}, we present numerical results on both adaptively and randomly refined meshes, conforming the theoretical properties of the preconditioner.
We end with conclusions in Section \ref{sec:conclusions}.

\section{Discontinuous Galerkin discretization}
\label{sec:discretization}

Consider the model Poisson problem
\begin{equation}
   \label{eq:poisson}
   \begin{aligned}
   \Delta u &= f &&\qquad \text{ in $\Omega$,} \\
   u &= 0 &&\qquad \text{ on $\partial\Omega$,}
   \end{aligned}
\end{equation}
on the spatial domain $\Omega \subseteq \R^2$.
Let $\T$ denote a mesh of the domain $\Omega$ consisting of non-overlapping quadrilateral elements denoted $\k$.
We will consider the cases of both regular (conforming) meshes (i.e.\ those without hanging nodes) and irregular (nonconforming) meshes.
Let $\khat$ denote the reference element $\khat = [0,1]^2$.
For given $p$, let $\Q_p(\khat)$ denote the space of bivariate polynomials of degree at most $p$ in each variable.
For each element $\k \in \T$, we associate a mapping $T_\k : \khat \mapsto \k$, and a polynomial degree $p_\k$
Define the DG finite element space by
% To me v_h|_\k \in T_\k \circ \Q_{p_\k}(\khat) is more clear below
\begin{equation}
   V_h = \{ v_h \in L^2(\Omega) : v_h|_\k \circ T_\k \in \Q_{p_\k}(\khat) \text{ for all $\k \in \T$} \}.
\end{equation}
In what follows, we will assume that the mesh is quasi-uniform, and that the ratio of polynomial degrees in elements sharing a common edge remains bounded.

Let $\Gamma$ denote the mesh skeleton, given by the set of all mesh faces.
Let $e \in \Gamma$ denote an interface bordering elements $\k^-$ and $\k^+$, and let $\bm n_e^-$ (resp.\ $\bm n_e^+$) denote the unit vector normal to $e$ pointing outward from $\k^-$ (resp.\ $\k^+$).
Let $\phi \in V_h$ be given.
Then, $\phi_e^-$ (resp.\ $\phi_e^+$) is used to denote the trace of $\phi$ on $e$ from within $\k^-$ (resp.\ $\k^+$)
We define the average and jump of $\phi$ at this interface by
\[
   \{ \phi \}_e = \frac{1}{2}(\phi_e^- + \phi_e^+), \qquad
   \llb \phi \rrb_e = \phi_e^- \bm n_e^- + \phi_e^+ \bm n_e^+.
\]
Typically, the edge $e$ in question will be clear from the context, and so we will often drop the subscripts $e$.

Given these definitions, we discretize \eqref{eq:poisson} using the symmetric interior penalty DG method \cite{Arnold1982,Arnold2002}.
The finite element problem is: find $u_h \in V_h$ such that, for all test functions $v_h \in V_h$,
\begin{equation} \label{eq:fe-problem}
   \A(u_h, v_h) = \int_\Omega f v_h \, d\bm x,
\end{equation}
where the bilinear form $\A(\dotdot)$ is defined by
\begin{equation}
   \A(u_h, v_h) =
      \int_\Omega \nabla_h u_h \cdot \nabla_h v_h \, d\bm x
      - \int_\Gamma \{ \nabla_h u_h \} \cdot \llb v_h \rrb \, ds
      - \int_\Gamma \{ \nabla_h v_h \} \cdot \llb u_h \rrb \, ds
      + \int_\Gamma \sigma \llb u_h \rrb \cdot \llb v_h \rrb \, ds.
\end{equation}
Here, $\nabla_h \phi$ denotes the \textit{broken gradient} of $\phi$, defined elementwise over $\T$.
The parameter $\sigma$ must be chosen sufficiently large in order to ensure that the resulting discretization is stable \cite{Arnold2002}.
In particular, at each interface, we take $\sigma = \eta\, p_e^2/h_e$, where $p_e = \max \{ p_{\k^-}, p_{\k^+} \}$, and $h_e = \min \{ h_{\k^-}, h_{\k^+} \}$, where $h_\k$ denotes the mesh size of the element $\k$ \cite{Wihler2003}.
The parameter $\eta$ is known as the \textit{penalty parameter}, and is chosen to satisfy $\eta \geq \eta_0 > 0$ for some $\eta_0$.
An explicit formula for $\eta_0$ was given in \cite{Shahbazi2005}, where it was also noted that the conditioning of the resulting linear system degrades for larger values of $\eta$.
One of the goals of this work is to develop preconditioners for this system whose convergence is independent of the choice of $\eta$.

\begin{rem}
   There are many alternative DG discretizations for elliptic problems, including the local DG (LDG) method \cite{Cockburn1998}, the compact DG (CDG) method \cite{Peraire2008}, the method of Bassi and Rebay (BR2) \cite{Bassi2000}, among others.
   For simplicity, we focus on the symmetric interior penalty (SIPDG) method, but the preconditioners developed here are also applicable to these alternative methods.
   Numerical examples using the BR2 method are presented in Section \ref{sec:penalty}.
\end{rem}

\begin{rem}
   In what follows, we will use the notation $a \lesssim b$ to mean that $a \leq C b$, where $C$ is a constant that is independent of the mesh size, polynomial degree, and penalty parameter, but which \textit{may} depend on the level of irregularity of the mesh, and on the ratio of polynomial degrees in neighboring elements.
   We will write $a \gtrsim b$ to mean $b \lesssim a$, and $a \eq b$ to mean that both $a \lesssim b$ and $b \lesssim a$.
\end{rem}

\subsection{DG norms and estimates}

We define the mesh-dependent DG norm $\|\cdot\|_{\DG}$ by
\begin{equation}
   \| v_h \|_{\DG}^2 = \| \nabla_h v_h \|_{0}^2 + \sum_{e \in \Gamma} \| \sigma^{1/2} \llb v_h \rrb \|_{0,e}^2.
\end{equation}
The bilinear form $\A(\dotdot)$ satisfies the following continuity and coercivity bounds \cite{Antonietti2010}.
\begin{lem}
   For all $v_h, u_h \in V_h$,
   \begin{align}
      \A(u_h, v_h) &\lesssim \| u_h \|_{\DG} \| v_h \|_{\DG}, \\
      \A(u_h, u_h) &\gtrsim \| u_h \|_{\DG}^2.
   \end{align}
\end{lem}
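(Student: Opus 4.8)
The plan is to check both inequalities term by term against the definition of $\|\cdot\|_{\DG}$; the only ingredient beyond Cauchy--Schwarz and Young's inequality is the discrete trace (inverse) inequality for polynomials. Recall that if $w|_\k$ is a polynomial of degree at most $p_\k$, then $\|w\|_{0,\partial\k}^2 \lesssim (p_\k^2/h_\k)\|w\|_{0,\k}^2$ with a hidden constant depending only on the shape-regularity of $\k$; applied componentwise to $\nabla_h u_h$ this controls the normal traces of the broken gradient on $\partial\k$. On an irregular mesh an edge $e \in \Gamma$ is a (possibly proper) subset of $\partial\k^-$ and of $\partial\k^+$, so restricting a trace to $e$ only decreases its $L^2(e)$-norm, and by the irregularity assumption each element meets a bounded number of skeleton edges; hence summing edge contributions reproduces $\|\nabla_h u_h\|_0^2$ up to a constant. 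The key scaling observation is that, for either neighbour $\k$ of an edge $e$, the choice $\sigma = \eta\, p_e^2/h_e$ with $p_e = \max\{p_{\k^-},p_{\k^+}\}$ and $h_e = \min\{h_{\k^-},h_{\k^+}\}$ gives $\sigma^{-1}(p_\k^2/h_\k) = \eta^{-1}(h_e/h_\k)(p_\k^2/p_e^2) \le \eta^{-1}$, since $h_e \le h_\k$ and $p_\k \le p_e$. Combining the last two facts, $\sum_{e\in\Gamma}\|\sigma^{-1/2}\{\nabla_h u_h\}\|_{0,e}^2 \le (C_1/\eta)\|\nabla_h u_h\|_0^2$ with $C_1$ depending only on shape-regularity and the irregularity level.

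For continuity, apply Cauchy--Schwarz to each of the four terms of $\A(\dotdot)$. The volume term is bounded by $\|\nabla_h u_h\|_0\|\nabla_h v_h\|_0 \le \|u_h\|_{\DG}\|v_h\|_{\DG}$ and the penalty term by $\|\sigma^{1/2}\llb u_h\rrb\|_{0,\Gamma}\|\sigma^{1/2}\llb v_h\rrb\|_{0,\Gamma} \le \|u_h\|_{\DG}\|v_h\|_{\DG}$. For each consistency term, insert the weight $\sigma^{\pm 1/2}$ and use Cauchy--Schwarz edgewise:
\[
   \int_\Gamma \{\nabla_h u_h\}\cdot\llb v_h\rrb\,ds \le \Big(\sum_{e\in\Gamma}\|\sigma^{-1/2}\{\nabla_h u_h\}\|_{0,e}^2\Big)^{1/2}\|v_h\|_{\DG} \le (C_1/\eta)^{1/2}\|u_h\|_{\DG}\|v_h\|_{\DG},
\]
and since $\eta \ge \eta_0$ the prefactor is at most $(C_1/\eta_0)^{1/2}$. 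Summing the four contributions gives $\A(u_h,v_h)\lesssim \|u_h\|_{\DG}\|v_h\|_{\DG}$ with a constant independent of $h$, $p$, and $\eta$.

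For coercivity, expand
\[
   \A(u_h,u_h) = \|\nabla_h u_h\|_0^2 - 2\int_\Gamma\{\nabla_h u_h\}\cdot\llb u_h\rrb\,ds + \sum_{e\in\Gamma}\|\sigma^{1/2}\llb u_h\rrb\|_{0,e}^2.
\]
Bounding the cross term as above and then applying Young's inequality $2xy \le 2x^2 + \tfrac12 y^2$ with $x = (C_1/\eta)^{1/2}\|\nabla_h u_h\|_0$ and $y = \|\sigma^{1/2}\llb u_h\rrb\|_{0,\Gamma}$ yields
\[
   \A(u_h,u_h) \ge \Big(1 - \frac{2C_1}{\eta}\Big)\|\nabla_h u_h\|_0^2 + \frac12\sum_{e\in\Gamma}\|\sigma^{1/2}\llb u_h\rrb\|_{0,e}^2 \ge \frac12\|u_h\|_{\DG}^2,
\]
the final step using that $\eta_0$ is taken at least $4C_1$, which is precisely the role of the explicit stability threshold of \cite{Shahbazi2005}; the constant $\tfrac12$ does not depend on $h$, $p$, or $\eta$.

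The work here is bookkeeping rather than conceptual: the main thing to watch is that every constant coming out of the trace inequality and out of the $\min$ and $\max$ defining $h_e$ and $p_e$ ends up depending only on shape-regularity, the irregularity level, and (were the penalty defined asymmetrically) the neighbouring-degree ratio, and \emph{not} on $h$, $p$, or $\eta$; for coercivity one must additionally note that this pins down how large the threshold $\eta_0$ has to be relative to that trace constant.
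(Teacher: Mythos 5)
Your argument is correct and is precisely the standard proof of continuity and coercivity for the symmetric interior penalty form, which the paper does not reproduce but instead cites from \cite{Antonietti2010}. You have supplied the two pieces that make it go through in the general $hp$-nonconforming setting --- the observation that $\sigma^{-1}(p_\k^2/h_\k)\le\eta^{-1}$ because of how $p_e$ and $h_e$ are defined as the max/min over the two neighbours, and the finite-overlap bookkeeping for hanging edges --- and then run Cauchy--Schwarz plus Young's inequality as usual; coercivity does indeed require $\eta_0$ to dominate the discrete trace constant, which is exactly what the threshold in \cite{Shahbazi2005} encodes.
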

We additionally have the following useful eigenvalue estimates \cite{Antonietti2010}.
\begin{lem} \label{lem:eigenvalue}
   For any $u_h \in V_h$,
   \[
      \| u_h \|_0^2 \lesssim \A(u_h, u_h) \lesssim \sum_{\k\in\T} \eta \frac{p_\k^4}{h_\k^2} \| u_h \|_{0,\k}^2.
   \]
\end{lem}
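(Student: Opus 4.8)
The plan is to establish the two inequalities separately. For the lower bound $\|u_h\|_0^2 \lesssim \A(u_h,u_h)$, I would combine the coercivity estimate of the previous lemma, $\A(u_h,u_h) \gtrsim \|u_h\|_{\DG}^2$, with a discrete Poincar\'e--Friedrichs inequality on the broken space, namely $\|u_h\|_0^2 \lesssim \|u_h\|_{\DG}^2$ with a constant independent of $h$, $p$, and $\eta$. The latter follows from the Poincar\'e inequality for piecewise-$H^1$ functions (in the spirit of Brenner): $\|u_h\|_0^2 \lesssim \|\nabla_h u_h\|_0^2 + \sum_{e\in\Gamma} h_e^{-1}\|\llb u_h\rrb\|_{0,e}^2$, where the homogeneous Dirichlet condition enters through the jump terms on faces $e \subseteq \partial\Omega$. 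Since $\sigma = \eta\, p_e^2/h_e \geq \eta_0 h_e^{-1}$, each face term is dominated by $\eta_0^{-1} \|\sigma^{1/2}\llb u_h\rrb\|_{0,e}^2$; hence the Poincar\'e constant does not deteriorate as $\eta$ or $p$ grow, and the lower bound follows.

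For the upper bound I would first invoke the continuity estimate, $\A(u_h,u_h) \lesssim \|u_h\|_{\DG}^2$, reducing the claim to a purely local bound on $\|u_h\|_{\DG}^2$. The volumetric term is controlled by the polynomial inverse inequality $\|\nabla v\|_{0,\k} \lesssim p_\k^2 h_\k^{-1} \|v\|_{0,\k}$, valid for the restriction $v = u_h|_\k$ (a polynomial of degree $p_\k$ pulled back through $T_\k$), which yields $\|\nabla_h u_h\|_0^2 \lesssim \sum_{\k\in\T} p_\k^4 h_\k^{-2} \|u_h\|_{0,\k}^2$. For the penalty term I would apply the discrete trace inequality $\|v\|_{0,e}^2 \lesssim p_\k^2 h_\k^{-1} \|v\|_{0,\k}^2$ on each face $e\subseteq\partial\k$, together with the elementary bound of $\|\llb u_h\rrb\|_{0,e}^2$ by the squared face-norms of the two one-sided traces, and the equivalences $p_e \eq p_{\k^\pm}$ and $h_e \eq h_{\k^\pm}$ that follow from quasi-uniformity and the bounded ratio of neighboring polynomial degrees. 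Summing over the boundedly many faces incident to each element then gives $\sum_{e\in\Gamma} \|\sigma^{1/2}\llb u_h\rrb\|_{0,e}^2 \lesssim \sum_{\k\in\T} \eta\, p_\k^4 h_\k^{-2} \|u_h\|_{0,\k}^2$, and adding the two contributions proves the upper bound.

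The main obstacle is the lower bound. The upper bound uses only local polynomial estimates on the reference element together with standard scaling, whereas the Poincar\'e--Friedrichs inequality is a global statement about the broken space whose constant must be shown not to degrade as $\eta \to \infty$ or as the degrees $p_\k$ grow. The key observation is that increasing $\sigma$ only strengthens the right-hand side of the Poincar\'e inequality, so the estimate is inherited from the minimal penalization $\sigma \eq h_e^{-1}$; a little extra care is needed on boundary faces to ensure the zero Dirichlet data is used correctly. With that in hand, the remainder is routine.
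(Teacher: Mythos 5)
The paper does not prove this lemma; it is simply quoted from Antonietti, Houston, and Smears (their reference [Antonietti2010]), so there is no internal argument to compare against. Your derivation is the standard one and is correct: the upper bound follows from $\A(u_h,u_h)\lesssim\|u_h\|_{\DG}^2$ together with the polynomial inverse inequality $\|\nabla u_h\|_{0,\k}\lesssim (p_\k^2/h_\k)\|u_h\|_{0,\k}$, the discrete trace inequality, and the quasi-uniformity and bounded-degree-ratio assumptions to equate $p_e,h_e$ with $p_\k,h_\k$ on each side (and $\eta\geq\eta_0$ lets the gradient term be absorbed into the $\eta$-weighted sum); the lower bound follows from coercivity plus the broken Poincar\'e--Friedrichs inequality of Brenner, whose $h_e^{-1}$-weighted jump terms (including those on $\partial\Omega$, which enforce the Dirichlet condition) are dominated by $\eta_0^{-1}\sigma$ since $\sigma=\eta p_e^2/h_e\geq\eta_0 h_e^{-1}$. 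Your observation that increasing the penalization only strengthens the right-hand side, so the Poincar\'e constant does not degrade with $\eta$ or $p$, is exactly the point that makes the lower bound uniform, and no step in your argument fails.
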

The following simple result is a slight refinement of Lemma \ref{lem:eigenvalue}.
\begin{lem} \label{lem:eigval-2}
For any $u_h \in V_h$,
\[
   \A(u_h, u_h) \lesssim \sum_{\k\in\T} \frac{p_\k^4}{h_\k^2} \| u_h \|_{0,\k}^2
   + \sum_{e\in\Gamma} \eta \frac{p_e^2}{h_e} \| \llb u_h \rrb \|_{0,e}^2.
\]
Furthermore, for any $w_h\in V_h$ with $\llb w_h \rrb = 0$,
\[
   \A(u_h - w_h, u_h - w_h)
   \lesssim \sum_{\k\in\T} \frac{p_\k^4}{h_\k^2} \| u_h - w_h \|_{0,\k}^2
   + \sum_{e\in\Gamma} \eta \frac{p_e^2}{h_e} \| \llb u_h \rrb \|_{0,e}^2.
\]
\end{lem}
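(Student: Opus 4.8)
The plan is to prove the first estimate directly from the definition of $\A(\dotdot)$, bounding the volume, consistency, and penalty terms separately, and then to deduce the second estimate from the first by linearity of the jump operator. Expanding,
\[
   \A(u_h,u_h) = \|\nabla_h u_h\|_0^2 - 2\int_\Gamma \{\nabla_h u_h\}\cdot\llb u_h\rrb \, ds + \int_\Gamma \sigma\,\llb u_h\rrb\cdot\llb u_h\rrb\,ds .
\]
Since $\sigma = \eta\,p_e^2/h_e$ on each edge, the last term is \emph{exactly} $\sum_{e\in\Gamma}\eta\,(p_e^2/h_e)\|\llb u_h\rrb\|_{0,e}^2$, i.e.\ the second term on the right-hand side of the claim. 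It therefore remains only to bound the volume term and the consistency term by $\sum_{\k\in\T}(p_\k^4/h_\k^2)\|u_h\|_{0,\k}^2$ plus a harmless multiple of this penalty term.

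For the consistency term I would apply Cauchy--Schwarz followed by Young's inequality weighted by $\sigma$,
\[
   2\left|\int_\Gamma \{\nabla_h u_h\}\cdot\llb u_h\rrb\,ds\right| \le \int_\Gamma \sigma^{-1}\bigl|\{\nabla_h u_h\}\bigr|^2\,ds + \int_\Gamma \sigma\,\bigl|\llb u_h\rrb\bigr|^2\,ds ,
\]
where the second piece is again absorbed into the penalty term. For the first piece, the discrete trace inequality $\|\nabla u_h\|_{0,e}^2 \lesssim (p_\k^2/h_\k)\|\nabla u_h\|_{0,\k}^2$ for an edge $e$ of $\k$, combined with $h_e \le h_\k$, $p_e \ge p_\k$, and $\eta\ge\eta_0$, gives $\int_\Gamma \sigma^{-1}|\{\nabla_h u_h\}|^2\,ds \lesssim \|\nabla_h u_h\|_0^2$. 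Finally, the polynomial inverse inequality $\|\nabla u_h\|_{0,\k} \lesssim (p_\k^2/h_\k)\|u_h\|_{0,\k}$ yields $\|\nabla_h u_h\|_0^2 \lesssim \sum_{\k\in\T}(p_\k^4/h_\k^2)\|u_h\|_{0,\k}^2$. Collecting the three bounds proves the first inequality.

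For the second statement, I would simply observe that $\llb\cdot\rrb$ is linear and $\llb w_h\rrb = 0$, hence $\llb u_h - w_h\rrb = \llb u_h\rrb$; applying the first inequality with $u_h - w_h$ in place of $u_h$ then gives precisely the asserted bound, the penalty contribution being automatically expressed through $\llb u_h\rrb$ rather than $\llb u_h-w_h\rrb$.

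The only point needing care is the bookkeeping in the consistency estimate: one must use $h_e = \min\{h_{\k^-},h_{\k^+}\}$, $p_e = \max\{p_{\k^-},p_{\k^+}\}$, the quasi-uniformity of the mesh, the boundedness of the ratio of polynomial degrees across an edge, the bounded irregularity (so that each element borders only a bounded number of edges), and $\eta\ge\eta_0$, so that all of these factors collapse into the constant hidden by $\lesssim$. None of the individual ingredients is deep — they are the standard inverse and discrete trace inequalities for tensor-product polynomials on quasi-uniform quadrilateral meshes — and the content of the lemma is only that, in contrast to Lemma \ref{lem:eigenvalue}, the entire $\eta$-dependence has been isolated in the jump term; when $u_h$ is replaced by $u_h-w_h$ with $\llb w_h\rrb = 0$, that term involves merely $\llb u_h\rrb$, so the estimate becomes useful precisely for functions close to the conforming subspace.
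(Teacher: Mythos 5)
Your proof is correct and follows essentially the same route as the paper, which disposes of the first statement in one terse line (citing ``Poincar\'e's inequality'' --- more accurately, the continuity of $\A$ in the DG norm combined with the standard inverse estimate) and of the second by linearity of the jump. You have simply made the continuity argument explicit: expanding $\A(u_h,u_h)$, absorbing the consistency term into the penalty term by $\sigma$-weighted Young's inequality plus the discrete trace inequality, and converting the remaining volume gradient term via the inverse inequality; the handling of the second statement via $\llb u_h - w_h\rrb = \llb u_h\rrb$ is identical to the paper's.
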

\begin{proof}
   The first statement follows from Poincar\'e's inequality, and the second statement follows from linearity of the jump.
\end{proof}

\subsection{Gauss--Lobatto basis}

A key ingredient in the construction of preconditioners for the DG discretization will be the use of the Gauss--Lobatto nodal basis.
Given polynomial degree $p_\k$, let $\widehat{\bm\xi_i} \in \khat$, $1 \leq i \leq (p+1)^2$ denote the tensor-product Gauss--Lobatto points.
For each element $\k$, the points $\bm\xi_{\k,i}$ are defined as the image of $\widehat{\bm\xi}_i$ under the mapping $T_\k$, i.e.\ $\bm\xi_{\k,i} = T_\k(\widehat{\bm\xi}_i)$.
For each such point, we define the basis function $\phi_{\k,i}$ as the unique element of $V_h$ such that
\[
   \phi_{\k,i}(\bm\xi_{\k',i'}) = \begin{cases}
      % 1, & \text{if $\bm\xi_{\k',i'} = \bm\xi_{\k,i}$,} \\
      1, & \text{if $(\k',i') = (\k,i)$,} \\
      0, & \text{otherwise.}
   \end{cases}
\]
The set of functions $\{ \phi_{\k,i} \}$ forms a basis for the space $V_h$.
For a given element $\k\in\T$, let $\mathcal{B}(\k)$ denote the set of Gauss--Lobatto nodes lying on $\partial\k$, and let $\mathcal{I}(\k)$ denote the set of nodes lying in the interior of $\k$.
It is well-known that the discrete $L^2$ norm associated with the Gauss--Lobatto quadrature nodes and weights is equivalent to the (exactly integrated) $L^2$ norm \cite{Canuto1982,Canuto1994,Canuto2006}.
As a consequence, we have the following closely related result from \cite{Antonietti2016}.
\begin{lem}[{{\cite[Lemma 3]{Antonietti2016}}}] \label{lem:quadrature}
   Take any $v_h \in V_h$.
   Let $v_{\k,i}(\bm x) = v_h(\bm\xi_{\k,i}) \phi_{\k,i}(\bm x)$.
   Then,
   \[
      \| v_h \|_0^2 \eq \sum_{\k,i} \| v_{\k,i} \|_0^2.
   \]
\end{lem}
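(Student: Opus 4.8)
The plan is to reduce the statement to a single element, where it becomes the classical $p$-uniform equivalence between the Gauss--Lobatto quadrature norm and the $L^2$ norm (Canuto et al.), and then to sum over $\T$. The first step is to exploit the discontinuous structure of $V_h$. Each basis function $\phi_{\k,i}$ vanishes at every node $\bm\xi_{\k',i'}$ with $\k'\neq\k$; since a function in $\Q_{p_{\k'}}(\khat)$ is determined by its values at the $(p_{\k'}+1)^2$ tensor-product Gauss--Lobatto points, the restriction of $\phi_{\k,i}$ to any $\k'\neq\k$ is identically zero, i.e.\ $\phi_{\k,i}$ is supported on $\overline{\k}$. Consequently $\|v_{\k,i}\|_0 = \|v_{\k,i}\|_{0,\k}$, and because $\|v_h\|_0^2 = \sum_{\k\in\T}\|v_h\|_{0,\k}^2$, it suffices to show, for each fixed $\k\in\T$,
\[
   \|v_h\|_{0,\k}^2 \eq \sum_i \|v_{\k,i}\|_{0,\k}^2,
\]
with constants independent of $h$ and $p_\k$.

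Next I would set up the local quadrature equivalence. Let $\widehat w_i$ denote the tensor-product Gauss--Lobatto weights on $\khat$, and set $w_{\k,i} := \widehat w_i\,|J_\k(\widehat{\bm\xi}_i)|$, where $J_\k$ is the Jacobian of $T_\k$. The cited result gives, for every $\widehat q \in \Q_{p_\k}(\khat)$, the $p_\k$-uniform bound $\|\widehat q\|_{0,\khat}^2 \eq \sum_i \widehat w_i\,\widehat q(\widehat{\bm\xi}_i)^2$. Pulling back through $T_\k$, and using quasi-uniformity and shape-regularity to control $|J_\k|$ — which for a bilinear quadrilateral map is not constant but is bounded above and below by a fixed multiple of its mean over $\khat$ — one obtains $\|\phi\|_{0,\k}^2 \eq \sum_i w_{\k,i}\,\phi(\bm\xi_{\k,i})^2$ for every $\phi$ whose restriction to $\k$ lies in $\Q_{p_\k}(\khat)\circ T_\k^{-1}$, again with $p_\k$-independent constants.

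Finally, I apply this equivalence twice. Taking $\phi = v_h$ yields $\|v_h\|_{0,\k}^2 \eq \sum_i w_{\k,i}\,v_h(\bm\xi_{\k,i})^2$; taking $\phi = \phi_{\k,i}$ and using the nodal property $\phi_{\k,i}(\bm\xi_{\k,i'}) = \delta_{ii'}$ yields $\|\phi_{\k,i}\|_{0,\k}^2 \eq w_{\k,i}$. Combining,
\[
   \sum_i \|v_{\k,i}\|_{0,\k}^2 = \sum_i v_h(\bm\xi_{\k,i})^2\,\|\phi_{\k,i}\|_{0,\k}^2 \eq \sum_i w_{\k,i}\,v_h(\bm\xi_{\k,i})^2 \eq \|v_h\|_{0,\k}^2,
\]
and summing over $\k\in\T$ gives the claim. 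There is no deep obstacle here; the only points requiring care are that the equivalence constants be genuinely independent of $p$ (this is the nontrivial content of the cited Gauss--Lobatto norm equivalence, which we invoke as a black box) and that the non-constant Jacobian of a general quadrilateral map not spoil the per-node weight comparison, which is handled by shape-regularity.
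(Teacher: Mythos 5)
The paper does not give its own proof of this lemma; it states it as a known result and cites \cite[Lemma 3]{Antonietti2016}, which in turn rests on the classical $p$-uniform equivalence of the Gauss--Lobatto quadrature norm and the $L^2$ norm (Canuto et al.). Your proof is correct and follows exactly that standard route: localize to a single element using the elementwise support of the DG basis, invoke the reference-element Gauss--Lobatto norm equivalence as a black box, pull it back through the element map using shape regularity to control the non-constant Jacobian, and then apply the resulting per-element equivalence once to $v_h$ and once to each $\phi_{\k,i}$ (whose quadrature-norm square collapses to the single weight $w_{\k,i}$ by the nodal property). This is essentially the same argument underlying the cited result, so nothing further to add.
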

We can easily show the following result.
\begin{cor} \label{cor:quadrature-2}
   Let $v_h \in V_h$, and write $v_h = \sum_j v_j$, such that for any node $\bm\xi_{\k,i}$, the value $v_j(\bm\xi_{\k,i})$ is nonzero for at most one function $v_j$.
   Then,
   \[
      \| v_h \|_0^2 \eq \sum_j \| v_j \|_0^2.
   \]
\end{cor}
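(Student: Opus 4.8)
The plan is to reduce Corollary \ref{cor:quadrature-2} to Lemma \ref{lem:quadrature} by expanding each $v_j$ in the Gauss--Lobatto nodal basis. Write $v_j = \sum_{\k,i} v_j(\bm\xi_{\k,i}) \phi_{\k,i}$. The hypothesis is that for each fixed node $\bm\xi_{\k,i}$, at most one index $j$ has $v_j(\bm\xi_{\k,i}) \neq 0$; call that index $j(\k,i)$ when it exists (and note $v_h(\bm\xi_{\k,i}) = v_{j(\k,i)}(\bm\xi_{\k,i})$, since the other summands vanish at that node). Thus the nodal basis coefficients of the $v_j$, taken together over all $j$, are exactly a partition of the nodal basis coefficients of $v_h$: each basis function $\phi_{\k,i}$ appears in exactly one $v_j$ with coefficient $v_h(\bm\xi_{\k,i})$.

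Now I would apply Lemma \ref{lem:quadrature} twice. First, directly to $v_h$: with $v_{\k,i}(\bm x) = v_h(\bm\xi_{\k,i})\phi_{\k,i}(\bm x)$ we get $\|v_h\|_0^2 \eq \sum_{\k,i}\|v_{\k,i}\|_0^2$. Second, I would like to say the same for each $v_j$. The cleanest route is to observe that the conclusion of Lemma \ref{lem:quadrature} is really a statement about the equivalence of the $L^2$ norm with the discrete Gauss--Lobatto norm, applied on the subset of elements where $v_j$ is supported; on each element $\k$, $v_j|_\k$ is a polynomial whose nodal values are a sub-collection of those of $v_h|_\k$, so $\|v_j\|_0^2 \eq \sum_{\k,i} \|v_j(\bm\xi_{\k,i})\phi_{\k,i}\|_0^2 = \sum_{(\k,i): j(\k,i)=j} \|v_{\k,i}\|_0^2$, using the partition identity $v_j(\bm\xi_{\k,i})\phi_{\k,i} = v_{\k,i}$ whenever $j(\k,i)=j$ and $0$ otherwise. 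Summing over $j$ and using that $\{(\k,i)\}$ is partitioned by the map $(\k,i)\mapsto j(\k,i)$ (on nodes where some $v_j$ is nonzero; nodes where all $v_j$ vanish contribute $v_h(\bm\xi_{\k,i})=0$ and hence $v_{\k,i}=0$), I obtain $\sum_j \|v_j\|_0^2 \eq \sum_{\k,i}\|v_{\k,i}\|_0^2 \eq \|v_h\|_0^2$, with constants depending only on those in Lemma \ref{lem:quadrature}.

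**The main subtlety** I expect is purely bookkeeping: making precise that Lemma \ref{lem:quadrature} applies to each $v_j$ (it is stated for an arbitrary element of $V_h$, so this is immediate) and that summing the local equivalences over $j$ does not accumulate constants, because the collections of basis functions appearing across the different $v_j$ are disjoint and their union is the full nodal basis. There is no analytic difficulty here — the hypothesis on the $v_j$ is exactly engineered so that the nodal decomposition of $v_h$ refines both the decomposition $v_h = \sum_j v_j$ and the decomposition $v_h = \sum_{\k,i} v_{\k,i}$ — so the proof is a one-line application of Lemma \ref{lem:quadrature} to each piece followed by a summation.
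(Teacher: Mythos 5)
Your proof is correct and takes essentially the same route as the paper's: apply Lemma \ref{lem:quadrature} to each $v_j$, use the hypothesis to identify $\sum_j \sum_{\k,i} \| v_j(\bm\xi_{\k,i})\phi_{\k,i} \|_0^2$ with $\sum_{\k,i}\|v_{\k,i}\|_0^2$, and apply Lemma \ref{lem:quadrature} once more to $v_h$. The paper states this as a single chain of (in)equalities; your write-up just spells out the bookkeeping in more detail, including the harmless case of nodes where every $v_j$ vanishes.
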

\begin{proof}
   By Lemma \ref{lem:quadrature},
   \[
      \sum_j \| v_j \|_0^2
      \approx \sum_j \sum_{\k,i} \| v_j(\bm\xi_{\k,i}) \phi_{\k,i} \|_0^2
      = \sum_{\k,i} \| v_{\k,i} \|_0^2 \eq \| v_h \|_0^2. \qedhere
   \]
\end{proof}

We will also make use of the following trace and inverse trace inequalities from \cite{Burman2007}.
\begin{lem}[{{\cite[Lemma 3.2]{Burman2007}}}]
   Let $v_h \in V_h$ be given.
   Then,
   \begin{equation} \label{eq:trace-ineq}
      \| v_h \|_{0,\partial\k}^2 \lesssim \frac{p_\k^2}{h_\k} \| v_h \|_{0,\k}^2.
   \end{equation}
   Now, suppose that $v_h$ vanishes at all interior Gauss--Lobatto nodes of an element $\k$, i.e.\ $v_h(\bm\xi) = 0$ for all $\bm\xi \in \mathcal{I}(\k)$.
   Then,
   \begin{equation} \label{eq:inv-trace-ineq}
      \| v_h \|_{0,\k}^2 \lesssim \frac{h_\k}{p_\k^2} \| v_h \|_{0,\partial\k}^2.
   \end{equation}
\end{lem}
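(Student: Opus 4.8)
The plan is to reduce both estimates to the reference element $\khat = [0,1]^2$ by pulling back through $T_\k$, and there to exploit the tensor-product structure of $\Q_{p_\k}(\khat)$ together with a few elementary one-dimensional facts about Gauss--Lobatto points. Write $p = p_\k$ and $\hat v = v_h \circ T_\k \in \Q_p(\khat)$. Under the quasi-uniformity assumption, $|\det DT_\k| \eq h_\k^2$ pointwise and each edge of $\khat$ is mapped onto an edge of $\k$ of length $\eq h_\k$, so $\|v_h\|_{0,\k}^2 \eq h_\k^2 \|\hat v\|_{0,\khat}^2$ and $\|v_h\|_{0,\partial\k}^2 \eq h_\k \|\hat v\|_{0,\partial\khat}^2$. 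Thus \eqref{eq:trace-ineq} is equivalent to the scale-free bound $\|\hat v\|_{0,\partial\khat}^2 \lesssim p^2 \|\hat v\|_{0,\khat}^2$, and, since $T_\k$ carries interior Gauss--Lobatto nodes of $\khat$ to interior Gauss--Lobatto nodes of $\k$, \eqref{eq:inv-trace-ineq} is equivalent to $\|\hat v\|_{0,\khat}^2 \lesssim p^{-2} \|\hat v\|_{0,\partial\khat}^2$ whenever $\hat v$ vanishes at the interior Gauss--Lobatto nodes of $\khat$.

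For the trace bound, I would first establish the one-dimensional polynomial trace inequality $|q(0)|^2 + |q(1)|^2 \lesssim p^2 \|q\|_{L^2(0,1)}^2$ for univariate polynomials $q$ of degree at most $p$; this is the standard estimate obtained by expanding $q$ in Legendre polynomials and applying Cauchy--Schwarz, the factor $p^2$ coming from $\sum_{k=0}^{p}(2k+1) = (p+1)^2$. Applying this in the coordinate transverse to a given edge of $\khat$, for each fixed value of the tangential coordinate, and then integrating gives $\|\hat v\|_{0,e}^2 \lesssim p^2 \|\hat v\|_{0,\khat}^2$ for each of the four edges $e$; summing over the edges yields $\|\hat v\|_{0,\partial\khat}^2 \lesssim p^2 \|\hat v\|_{0,\khat}^2$, hence \eqref{eq:trace-ineq}.

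For the inverse trace bound, let $\{\psi_i\}$ be the Gauss--Lobatto Lagrange basis of $\Q_p(\khat)$. By the same Gauss--Lobatto quadrature norm equivalence that underlies Lemma~\ref{lem:quadrature}, applied on $\khat$ and on each of its edges, $\|\hat v\|_{0,\khat}^2 \eq \sum_i |\hat v(\widehat{\bm\xi}_i)|^2 \|\psi_i\|_{0,\khat}^2$ and $\|\hat v\|_{0,e}^2 \eq \sum_{\widehat{\bm\xi}_i \in e} |\hat v(\widehat{\bm\xi}_i)|^2 \|\psi_i|_e\|_{0,e}^2$. Because $\hat v$ vanishes at the interior nodes, the first sum reduces to the boundary nodes. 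Now the tensor-product structure is decisive: each $\psi_i$ is a product of two one-dimensional Gauss--Lobatto Lagrange functions, and for a node $\widehat{\bm\xi}_i$ on an edge $e$ the factor transverse to $e$ is an endpoint function $\ell_0$ or $\ell_p$, whose squared $L^2(0,1)$ norm is $\eq p^{-2}$ (the boundary Gauss--Lobatto weight), while the restriction $\psi_i|_e$ is precisely the remaining, tangential factor. Hence $\|\psi_i\|_{0,\khat}^2 \eq p^{-2} \|\psi_i|_e\|_{0,e}^2$ for every boundary node, uniformly (including corners). Substituting this, assigning each boundary node to one incident edge, and enlarging each edge sum back to run over all Gauss--Lobatto nodes of that edge gives $\|\hat v\|_{0,\khat}^2 \lesssim p^{-2} \sum_e \|\hat v\|_{0,e}^2 = p^{-2} \|\hat v\|_{0,\partial\khat}^2$, which is the desired bound.

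The difficulty here is essentially bookkeeping rather than anything deep. The two facts to invoke with care are the $p$-uniformity of the Gauss--Lobatto quadrature norm equivalence and the asymptotics $\|\ell_0\|_{L^2(0,1)}^2 \eq p^{-2}$ of the endpoint weight. One must also keep the summation structure intact: bounding each of the $\eq 4p$ boundary-node contributions separately by the full $\|\hat v\|_{0,\partial\khat}^2$ would lose a spurious factor of $p$, so the regrouping by edges has to be done before estimating. Finally, the corner nodes (assigned arbitrarily to one of their two edges) and the variation of the Jacobian of the non-affine quadrilateral map are harmless and affect only the hidden constants; this is where the quasi-uniformity assumption enters.
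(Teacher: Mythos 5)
The paper does not give its own proof of this lemma; it is quoted verbatim from Burman and Ern \cite[Lemma 3.2]{Burman2007}, so there is no internal argument to compare against. Your proof is a correct, self-contained derivation and follows the standard route one would expect the cited reference to take: scale to $\khat$, invoke the one-dimensional Legendre-based polynomial trace inequality and tensorize to get \eqref{eq:trace-ineq}, and for \eqref{eq:inv-trace-ineq} use the $p$-uniform Gauss--Lobatto quadrature norm equivalence on both $\khat$ and its edges together with the endpoint-weight asymptotic $\|\ell_0\|_{L^2(0,1)}^2 \eq p^{-2}$ to convert the volume sum over boundary nodes into a boundary integral. The two points you flag as the places where carelessness would lose a power of $p$ --- keeping the sum grouped by edges rather than bounding node-by-node, and the $p^{-2}$ transverse factor in $\|\psi_i\|_{0,\khat}^2 \eq p^{-2}\|\psi_i|_e\|_{0,e}^2$ --- are exactly the right places to be careful, and your handling of corner nodes and of the variable Jacobian of the bilinear quadrilateral map is appropriate given the paper's quasi-uniformity hypothesis.
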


\section{Preconditioning and space decomposition}
\label{sec:decomp}

\subsection{Parallel subspace corrections}

The preconditioner for the DG discretization will be constructed using the framework of \textit{parallel subspace corrections} (additive Schwarz methods), which has been studied in great detail in numerous works \cite{Dryja1990,Xu1992,Griebel1995,Xu2001,Xu2002,Toselli2005}.
In this section we briefly describe the abstract framework, and enumerate some results that will be useful in what follows.

Let $W$ be a vector space with inner product $(\dotdot)$.
Let $A:W \to W$ be a symmetric positive-definite linear operator, and let $\A(\dotdot)$ denote the induced inner product, i.e.\ $\A(u, v) = (Au, v)$.
The space $W$ is decomposed into a sum of subspaces, $W = \sum_{i=0}^J W_i$.
For each $i$, define the $L^2$ projection $Q_i$ by
\[
   (Q_i w, w_i) = (w, w_i) \quad\text{for all $w_i\in W_i$},
\]
and define the elliptic projection $P_i$ by
\[
   \A(P_i w, w_i) = \A(w, w_i) \quad\text{for all $w_i\in W_i$}.
\]
Let $A_i$ denote the restriction of $A$ to $W_i$.
We note the useful identity
\begin{equation} \label{eq:pq-identity}
   A_i P_i = Q_i A.
\end{equation}
The preconditioned system $P$ is defined by
\[
   P = \sum_{i=0}^J P_i = \left( \sum_{i=0}^J A_i^{-1} Q_i \right) A.
\]
In some circumstances, the subspace $W_i$ may be sufficiently large that $A_i^{-1}$ (and hence $P_i$) is impractical to compute.
In this case, we may replace each $A_i^{-1}$ with an approximation $R_i$ to obtain the preconditioner
\[
   B = \sum_{i=0}^J R_i Q_i.
\]
The corresponding approximate projections are denoted $T_i = R_i Q_i A$, and the preconditioned operator is written $T = \sum_{i=0}^J T_i = BA$.
Our main goal will be to estimate the \textit{iterative condition number} of the preconditioned operator, $\kappa(T) = \lmax(T)/\lmin(T)$, which will determine the speed of convergence when using the conjugate gradient method.
The main tool in our analysis of the subspace correction preconditioner will be the following useful identity (cf.\ \cite{Xu1992,Toselli2005,Xu2002}).
\begin{lem} \label{lem:inf-identity}
   For any $w \in W$ we have the identity
   \begin{equation} \label{eq:Tinv-inf-identity}
      \A(T^{-1}w, w) = \inf_{\substack{w_i \in W_i\\\sum w_i = w}} \sum_{i=0}^J \A(T_i^{-1} w_i, w_i),
   \end{equation}
   and, since $P_i$ restricted to the subspace $W_i$ equals the identity operator, we have the special case for exact projections
   \begin{equation} \label{eq:Pinv-inf-identity}
      \A(P^{-1}w, w) = \inf_{\substack{w_i \in W_i\\\sum w_i = w}} \sum_{i=0}^J \A(w_i, w_i).
   \end{equation}
\end{lem}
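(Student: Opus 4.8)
The plan is to strip the operator $A$ out of the statement and reduce it to the classical additive Schwarz norm identity for the preconditioner $B=\sum_{i=0}^J R_iQ_i$. Since $T=BA$ we have $T^{-1}=A^{-1}B^{-1}$, whence $\A(T^{-1}w,w)=(AT^{-1}w,w)=(B^{-1}w,w)$. Restricting $T_i=R_iQ_iA$ to $W_i$ and using \eqref{eq:pq-identity} together with $P_i|_{W_i}=\mathrm{id}$ (which gives $Q_iA|_{W_i}=A_i$), we see that $T_i$ acts on $W_i$ as $R_iA_i$, hence is invertible there with inverse $A_i^{-1}R_i^{-1}$, and $\A(T_i^{-1}w_i,w_i)=(A_iT_i^{-1}w_i,w_i)=(R_i^{-1}w_i,w_i)$ for $w_i\in W_i$. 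Thus \eqref{eq:Tinv-inf-identity} is equivalent to
\[
   (B^{-1}w,w)=\inf_{\substack{w_i\in W_i\\\sum w_i=w}}\ \sum_{i=0}^J(R_i^{-1}w_i,w_i),
\]
and \eqref{eq:Pinv-inf-identity} is the special case $R_i=A_i^{-1}$, in which $(R_i^{-1}w_i,w_i)=(A_iw_i,w_i)=\A(w_i,w_i)$.

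Next I would record that $B$ is symmetric positive definite, so that $B^{-1}$ is defined: symmetry is immediate from $(Bw,v)=\sum_i(R_iQ_iw,Q_iv)$ (using $R_iQ_iw\in W_i$) and symmetry of each $R_i$, and positivity follows because $(Bw,w)=\sum_i(R_iQ_iw,Q_iw)$ can vanish only if $Q_iw=0$ for every $i$, i.e.\ only if $w$ is orthogonal to $\sum_iW_i=W$. Throughout the argument I will use the elementary fact that, $Q_i$ being a self-adjoint projection, $(Q_ix,y_i)=(x,y_i)$ whenever $y_i\in W_i$.

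For the bound ``$\le$'' I would produce the minimizer explicitly: set $w_i:=R_iQ_iB^{-1}w$, so that $\sum_iw_i=\bigl(\sum_iR_iQ_i\bigr)B^{-1}w=w$ is an admissible decomposition, and compute
\[
   \sum_i(R_i^{-1}w_i,w_i)=\sum_i(Q_iB^{-1}w,R_iQ_iB^{-1}w)=\sum_i(B^{-1}w,R_iQ_iB^{-1}w)=(B^{-1}w,w),
\]
the middle step being the projection identity. For ``$\ge$'', let $w=\sum_iw_i$ be any admissible decomposition; writing $(B^{-1}w,w)=\sum_i(B^{-1}w,w_i)=\sum_i(Q_iB^{-1}w,w_i)$, applying the Cauchy--Schwarz inequality in the $R_i$-weighted inner product termwise and then the discrete Cauchy--Schwarz inequality in $i$ yields
\[
   (B^{-1}w,w)\le\Bigl(\sum_i(R_i^{-1}w_i,w_i)\Bigr)^{1/2}\Bigl(\sum_i(R_iQ_iB^{-1}w,Q_iB^{-1}w)\Bigr)^{1/2}.
\]
The last factor telescopes, again by the projection identity, to $\bigl(\sum_iR_iQ_iB^{-1}w,B^{-1}w\bigr)^{1/2}=(w,B^{-1}w)^{1/2}$, so cancelling one power of $(B^{-1}w,w)^{1/2}$ gives $(B^{-1}w,w)\le\sum_i(R_i^{-1}w_i,w_i)$; together with the previous bound this is the identity, and translating back via the reductions of the first paragraph gives \eqref{eq:Tinv-inf-identity} and \eqref{eq:Pinv-inf-identity}.

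The only steps that call for care are the bookkeeping ones: (i) interpreting $T_i^{-1}$ correctly — it is the inverse of $T_i$ as an operator on the subspace $W_i$, not on all of $W$ — and carrying out the reduction $\A(T_i^{-1}w_i,w_i)=(R_i^{-1}w_i,w_i)$; and (ii) the standing assumption, implicit in the construction of $B$, that each approximate solver $R_i$ is symmetric positive definite on $W_i$, which is what makes both the invertibility of $B$ and the $R_i$-weighted Cauchy--Schwarz step legitimate. I would state this hypothesis on the $R_i$ explicitly. Beyond that the argument is the standard subspace-correction computation and presents no real obstacle.
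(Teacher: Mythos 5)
Your proof is correct. The paper does not actually supply a proof of Lemma~\ref{lem:inf-identity}; it records the identity as a known result and cites \cite{Xu1992,Toselli2005,Xu2002}, so there is no internal proof to compare against. What you have written is the standard argument from that literature, and it is carried out cleanly: the reduction $\A(T^{-1}w,w)=(B^{-1}w,w)$ and $\A(T_i^{-1}w_i,w_i)=(R_i^{-1}w_i,w_i)$ via \eqref{eq:pq-identity} and $P_i|_{W_i}=\mathrm{id}$ is sound (taking care, as you note, that $T_i^{-1}$ means the inverse of $T_i|_{W_i}=R_iA_i$ on $W_i$); the explicit choice $w_i=R_iQ_iB^{-1}w$ gives the upper bound with equality; and the $R_i$-weighted Cauchy--Schwarz followed by the discrete Cauchy--Schwarz in $i$, with the telescoping $\sum_i(R_iQ_iB^{-1}w,Q_iB^{-1}w)=(B^{-1}w,w)$, gives the lower bound. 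You are also right to flag that the argument needs each $R_i$ to be symmetric positive definite on $W_i$; the paper leaves this hypothesis implicit, but it is what makes $B$ invertible and the weighted Cauchy--Schwarz legitimate, and it is satisfied trivially in the special case $R_i=A_i^{-1}$ that yields \eqref{eq:Pinv-inf-identity}.
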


\subsection{Conforming and boundary subspaces}

In this section we describe the subspace decomposition for the DG finite element space $V_h$.
Let $V_C$ denote the \textit{conforming subspace} of $V_h$, i.e.\ $V_C = V_h \cap H^1(\Omega)$.
Let $V_B$ (where $B$ here stands for boundary) denote the set of functions $v_b$ such that $v_b(\bm\xi_{\k,i}) = 0$ for all $\bm\xi_{k,i} \in \mathcal{B}(\k)$ for all elements $\k\in\T$.
That is to say, a function $v_b \in V_B$ vanishes at every \textit{interior} Gauss--Lobatto node of every element in the mesh.
It is clear that $V_h = V_B + V_C$.

\subsubsection{Conforming approximation}
\label{sec:conf-approx}

Let $u_h \in V_h$ be given.
We are interested in approximating $u_h$ by a conforming function $u_c \in V_C$.
We will make use of an interpolation operator $\Qh : V_h \to V_C$ that is often referred to as the Oswald operator (cf.\ \cite{Burman2007,Oswald1993}, among others).
The operator $\Qh$ is defined as follows.
Consider a nodal Gauss--Lobatto basis for the conforming space $V_C$.
In the case of $h$-refinement, the nodal points on a nonconforming edge are chosen to be the corresponding Gauss--Lobatto nodes of the coarse element.
In the case $p$-refinement, the nodal points on an element interface are chosen to be the corresponding Gauss--Lobatto nodes of the element with lower polynomial degree.
Note that in the case of $hp$-refinement, the conforming nodal points are no longer a subset of the DG nodal points.
Instead, the nodes on an $hp$-interface are chosen to be the coarse element Gauss--Lobatto nodes corresponding to the lowest polynomial degree of any element containing the given edge.
Then, any conforming function $u_c \in V_C$ is well-defined given its value at all conforming nodal points $\bm\xi$.
Let $u_h \in V_h$ be given.
We define $\Qh u_h(\bm\xi)$ to be the average value of $u_h(\bm\xi)$ over all elements $\k$ containing the node $\bm\xi$,
\[
   \Qh u_h(\bm \xi) = \frac{1}{\card \{\k\in\T : \bm\xi \in \k\}} \sum_{\k\ni\bm\xi} u_h|_\k(\bm\xi).
\]

If the mesh $\T$ is conforming, then it is possible to show the following important $hp$ approximation property of $\Qh$:
\begin{equation} \label{eq:Qh-jump}
   \| v_h - \Qh v_h \|_{0}^2
   \leq C \sum_{e\in\Gamma} \frac{h_\k}{p_\k^2} \| \llb v_h \rrb \|_{0,e}^2.
\end{equation}
This result was shown for conforming meshes and uniform polynomial degree in \cite{Burman2007}.
The case of nonconforming meshes and uniform polynomial degree was considered in \cite{Karakashian2003}.
The case of conforming meshes and variable polynomial degree was considered in \cite{Houston2007}.
Additionally, similar results were shown for one-irregular meshes with variable polynomial degree using an auxiliary mesh technique in \cite{Houston2007a,Zhu2010,Zhu2011}.

\begin{figure}
   \centering
   \pbox{0.4\linewidth}{
      \begin{tikzpicture}
         \draw (0,0) rectangle ++(6,3);
         \draw (3,0) -- ++(0,3);
         \draw (3,1) -- ++(3,0);
         \draw (3,2) -- ++(3,0);
         \node at (1.3,3.4) {$e$};
         \draw [->] (1.5,3.4) to[out=0,in=180] (3,1.5);
         \node at (1.5,1.5) {$p=2$};
         \node at (1.5,0.7) {$\k_0$};
         \node at (4.5,0.5) {$p=1 \qquad \k_1$};
         \node at (4.5,1.5) {$\vdots$};
         \node at (4.5,2.5) {$p=1 \qquad \k_n$};
         \draw[fill] (3,0) circle (2.5pt);
         \draw[fill] (3,3) circle (2.5pt);
      \end{tikzpicture}
   }
   \hspace{1cm}
   \pbox{0.59\linewidth}{
      \includegraphics{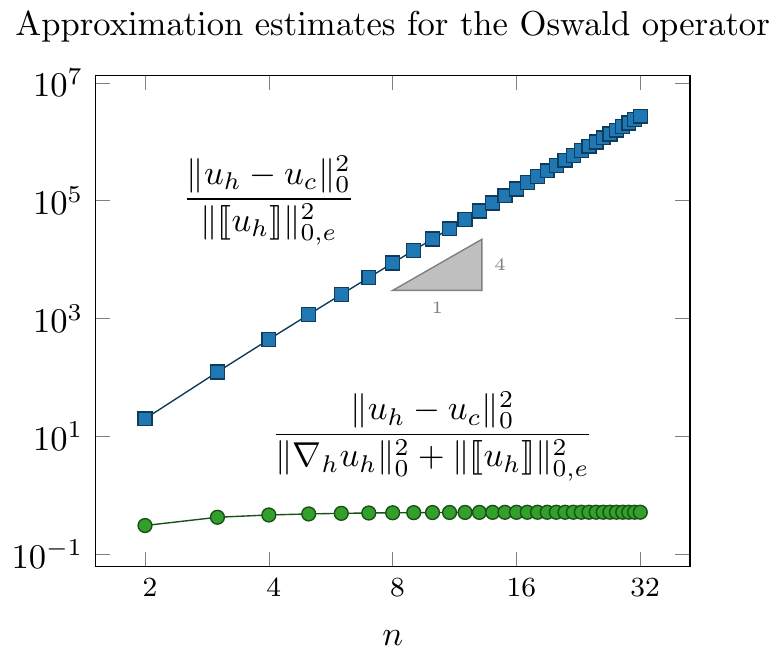}
   }

   \caption{
      Left: example of problematic case for estimates of type \eqref{eq:Qh-jump}.
      A large high-order element ($p=2$) borders $n$ low-order ($p=1$) elements.
      The value of any conforming function at the nonconforming interface $e$ is determined by its values at the two indicated nodes.
      Right: numerically computed best value of the constants in estimates \eqref{eq:Qh-jump} and \eqref{eq:Qh-jump-grad} for this case.
   }
   \label{fig:mesh-example}
\end{figure}

%% However, in the case of general nonconforming meshes with variable polynomial degrees, estimates of the form \eqref{eq:Qh-jump} are not satisfactory, as the following simple example illustrates.
%% Consider the simple mesh shown in Figure \ref{fig:mesh-example}.
However, in the case of general nonconforming meshes with variable polynomial degrees, estimates of the form \eqref{eq:Qh-jump} are not satisfactory and so one of the contributions of this paper is to propose a more precise estimate, see \eqref{eq:Qh-jump-grad} below, that will be critical in the preconditioner analysis.

To see why \eqref{eq:Qh-jump} is not sufficient in the general $hp$ case, consider the simple mesh shown in Figure \ref{fig:mesh-example}.
Let $f$ be a given biquadratic function, and define $u_h \in V_h$ by pointwise interpolation of $f$ at the Gauss--Lobatto nodes.
Since the left element has degree $p=2$, we have $u_h|_{\k_0} = f$.
On the remaining elements, $u_h$ is given by piecewise bilinear interpolation of this function, and so the pointwise error will scale as $1/n^2$, where $n$ is the number of refined elements on the right.
As a result, we see that
\[
   \| \llb u_h \rrb \|_{0,e}^2 \lesssim 1/n^4.
\]

Let $u_c = \Qh u_h \in V_c$.
Notice that if we refine the elements on the right (i.e.\ increase $n$), the error on the interface $\| u_h - u_c \|_{0,e}$ remains unchanged because $u_c$ is always determined by its values at the two black nodes indicated in the diagram.
We attempt to estimate the constant in the inequality \eqref{eq:Qh-jump}.
We have
\begin{align*}
   C &\geq \frac{p_e^2}{h_e} \frac{\| u_h - u_c \|_0^2}{\| \llb u_h \rrb \|_{0,e}^2} \\
     &\gtrsim \frac{p_e^2}{h_e} n^4 \| u_h - u_c \|_0^2 \\
     &\gtrsim n^4 \| u_h - u_c \|_{0,e}^2
     \eq n^4,
\end{align*}
where the second-to-last step follows from the inverse trace inequality.
We see that the constant in this inequality degrades very quickly with the number of nonconforming refinements in the $hp$ case.
This estimate is verified numerically by computing the minimum value of $C$ such that that inequality \eqref{eq:Qh-jump} holds for this particular configuration.
The values are shown in Figure \ref{fig:mesh-example}, illustrating the degradation of the constant $C$ with increasing refinements.

To address this issue, we bound the difference $\| u_c - u_c \|_0^2$ by both $\| \nabla u_h \|_0^2$ and $\sum_{e\in\Gamma} \frac{h_e}{p_e^2} \| \llb u_h \rrb \|_{0,e}^2$.
The numerical computations in Figure \ref{fig:mesh-example} indicate that the resulting modified estimates remain constant with increasing refinements.
This is confirmed by the following lemma, which is closely related to the Jackson-type estimates of \cite{Brix2008}.
\begin{lem} \label{lem:l2-jump-h1}
   Let $u_h \in V_h$, and let $u_c = \Qh u_h \in V_C$.
   Then,
   \begin{equation} \label{eq:Qh-jump-grad}
      \| u_h - u_c \|_{0}^2
      \lesssim \sum_\k \frac{h_\k^2}{p_\k^4} \| \nabla u_h \|_{0,\k}^2
      + \sum_{e\in\Gamma} \frac{h_e}{p_e^2} \| \llb u_h \rrb \|_{0,e}^2.
   \end{equation}
\end{lem}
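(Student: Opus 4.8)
The plan is to localize the estimate to individual elements using the inverse trace inequality \eqref{eq:inv-trace-ineq}, reduce it to a family of one-dimensional estimates on the mesh faces, and then prove a sharp bound on the error produced when the trace of a polynomial is replaced by a truncated Legendre expansion of lower degree; that last bound is where the real work lies. First I would observe that on each element $\k$ the function $u_h-u_c$ vanishes at every node of $\mathcal I(\k)$: these interior nodes belong to a single element, so $\Qh$ acts there as the identity and the interior conforming degrees of freedom of $\k$ coincide with $\mathcal I(\k)$. Hence \eqref{eq:inv-trace-ineq} applies and $\|u_h-u_c\|_{0,\k}^2\lesssim (h_\k/p_\k^2)\,\|u_h-u_c\|_{0,\partial\k}^2$. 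Summing over $\k$, it suffices to show, for each face $e\subset\partial\k$, that
\[
   \bigl\|(u_h-u_c)|_\k\bigr\|_{0,e}^2 \;\lesssim\; \frac{h_\k}{p_\k^2}\,\|\nabla u_h\|_{0,\k}^2 \;+\; \bigl\|\llb u_h\rrb\bigr\|_{0,e}^2 ;
\]
multiplying by the prefactor $h_\k/p_\k^2$ and using local quasi-uniformity of the mesh and the bounded ratio of neighbouring degrees (so $h_\k/p_\k^2\eq h_e/p_e^2$), the jump terms reproduce the second term of \eqref{eq:Qh-jump-grad}.

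On a face $e$, let $m$ be the smallest polynomial degree among the elements meeting $e$, let $\Pi_e^m$ denote degree-$m$ Gauss--Lobatto interpolation on $e$, and write $w=u_h|_\k$. The conforming trace $u_c|_e$ is the $\Pi_e^m$-interpolant of the pointwise averages defining $\Qh u_h$, and at each conforming node the difference between $w$ and that average is a multiple of the jump through the sub-face containing the node. Splitting $w|_e-u_c|_e$ accordingly into the interpolant of these nodal discrepancies plus the degree-truncation error $w|_e-\Pi_e^m(w|_e)$, the first part is controlled by $\|\llb u_h\rrb\|_{0,e}^2$ (summed over the sub-faces of $e$) via Lemma \ref{lem:quadrature} together with a Marcinkiewicz--Zygmund bound making $\Pi_e^m$ $L^2$-stable on polynomials of degree comparable to $m$. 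Thus everything reduces to the single inequality $\|w|_e-\Pi_e^m(w|_e)\|_{0,e}^2\lesssim (h_\k/p_\k^2)\|\nabla w\|_{0,\k}^2$ for a polynomial $w$ of degree $p_\k$ on $\k$, with $m\eq p_\k$; here, and throughout, every constant is allowed to depend on the mesh irregularity (which bounds the number of sub-faces per macro-face) and on the ratio of neighbouring degrees, but on nothing else.

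This is the main obstacle. By the same stability and Legendre orthogonality the left-hand side is $\eq \|w_{>m}|_e\|_{0,e}^2$, the $L^2(e)$ norm of the component of $w|_e$ spanned by the Legendre modes along $e$ of degree exceeding $m$; since $m\eq p_\k$ this is $\eq (h_\k/p_\k)\sum_{j>m}c_j^2$, where $c_j$ are the Legendre coefficients of $w|_e$, so the target is $\sum_{j>m}c_j^2\lesssim p_\k^{-1}\|\nabla w\|_{0,\k}^2$ --- sharp, for instance, on $w(x,y)=L_{p_\k}(x)\,g(y)$ with $L_{p_\k}$ the Legendre polynomial of degree $p_\k$, $g$ a scaled Legendre polynomial, and $e$ a face of constant $x$. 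The delicate point is that the naive argument --- bound $\|w_{>m}|_e\|_{0,e}$ by the best degree-$m$ approximation of $w|_e$, lift it to $\k$ with the polynomial trace inequality \eqref{eq:trace-ineq}, and apply a polynomial Jackson estimate $\inf_{\deg\rho\le m}\|w-\rho\|_{0,\k}\lesssim (h_\k/m)\|\nabla w\|_{0,\k}$ --- loses a factor $p_\k^2$, because \eqref{eq:trace-ineq} is far from sharp for polynomials that are spread across $\k$ rather than concentrated near $e$. I expect to have to estimate $w_{>m}|_e$ directly instead: expand $w$ in a tensor-product Legendre basis on $\k$, use the scaling-correct (non-polynomial) trace inequality $\|v\|_{0,e}^2\lesssim h_\k^{-1}\|v\|_{0,\k}^2+h_\k\|\partial_n v\|_{0,\k}^2$ for the normal direction, the weighted Legendre identity $\int_{-1}^1(1-t^2)(v')^2\,dt=\sum_j \tfrac{2j(j+1)}{2j+1}|\hat v_j|^2$ together with $\|\partial_s w\|_{0,\k}^2$ for the tangential direction, and Markov/Nikol'skii-type inequalities to convert endpoint values into $L^2$ norms without dropping powers of $p_\k$. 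The independence of all constants from the number of refinements is the precise improvement over \eqref{eq:Qh-jump} illustrated in Figure \ref{fig:mesh-example}.

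Finally, an alternative overall structure I would also consider: first establish that $\Qh$ is $L^2(\Omega)$-stable on $V_h$ (again from Lemma \ref{lem:quadrature}), whence $\|u_h-\Qh u_h\|_0\lesssim\inf_{w_c\in V_C}\|u_h-w_c\|_0$, and then construct a single explicit conforming comparison function achieving the bound; the essential work --- producing $w_c$ with the stated estimate --- is the same Legendre/trace analysis as above.
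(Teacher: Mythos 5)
Your approach is genuinely different from the paper's, and the first part of it is sound: the observation that $u_h - u_c$ vanishes at every node of $\mathcal I(\k)$ (since interior nodes see only one element, so $\Qh$ is the identity there) is correct, and it legitimately reduces the problem, via the inverse trace inequality \eqref{eq:inv-trace-ineq}, to estimating $\frac{h_\k}{p_\k^2}\|u_h-u_c\|_{0,\partial\k}^2$ face by face. Splitting the face error into a part seen at the conforming interpolation nodes (controlled by the jumps through the adjacent sub-faces, via Lemma \ref{lem:quadrature}) and a degree-truncation part is also a reasonable decomposition. The paper instead never localizes in this way: it constructs an auxiliary $\tilde u\in V_h$ that agrees with $u_h$ at interior and conforming nodes but has degree-$p_e$ traces on nonconforming edges, so that $\Qh\tilde u=u_c$; then $\|\tilde u-u_c\|_0$ is handled by importing the known Jackson-type estimates of Houston et al.\ and Karakashian--Pascal, and only a residual term $\|u_h-\tilde u\|_0$ plus a jump perturbation need to be controlled by interpolation accuracy. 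That is a different route, and it buys the author the ability to cite existing results for the bulk of the estimate; your route tries to prove everything from scratch.

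The gap is exactly where you say it is, and it is real: the inequality $\|w|_e-\Pi_e^m(w|_e)\|_{0,e}^2\lesssim (h_\k/p_\k^2)\,\|\nabla w\|_{0,\k}^2$ (with $m\eq p_\k$) is stated but not proved. You correctly diagnose that the obvious chain --- interpolation error $\lesssim m^{-2}\|\partial_s(w|_e)\|_{0,e}^2$, then the polynomial trace inequality \eqref{eq:trace-ineq} --- yields only $\lesssim \frac{1}{h_\k}\|\nabla w\|_{0,\k}^2$, i.e.\ loses a factor $p_\k^2$. But the Legendre-expansion/weighted-derivative program you sketch does not obviously recover that factor either: the weighted identity $\int(1-x^2)(\partial_x w)^2$ is a lossy lower bound for $\|\partial_x w\|_{0,\k}^2$ precisely for the extremal tensor-Legendre examples (e.g.\ $w=L_p(x)L_p(y)$, where it underestimates by a factor $\eq p$), and straightforward Cauchy--Schwarz weightings applied to $a_j=\sum_k c_{jk}$ keep landing at $\lesssim\|\nabla w\|_{0,\k}^2$ rather than $\lesssim p^{-2}\|\nabla w\|_{0,\k}^2$. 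I was not able to close this along the lines you indicate, and I don't think the claim can be reduced to off-the-shelf Markov/Nikol'skii inequalities; if you take this route, this sublemma is a substantial free-standing result and needs a full proof. (It is worth noting that the paper's own step $\|u_h-\tilde u\|_{0,\k}^2\lesssim \frac{h_\k^2}{p_\k^4}\|\nabla u_h\|_{0,\k}^2$, justified only by ``accuracy of the polynomial interpolant,'' amounts to essentially the same estimate after an application of the inverse trace inequality and the nodal $L^2$-equivalence of Lemma \ref{lem:quadrature}; so either route has to pay this cost, but the paper at least reduces everything else to cited literature.) Your ``alternative overall structure'' at the end --- build an explicit conforming comparison function and use $L^2$-stability of $\Qh$ --- is in spirit closer to the paper's construction of $\tilde u$ and is probably the easier route to complete, since it lets you quote the Houston and Karakashian--Pascal estimates for the conforming-edge and vertex contributions instead of redoing them, leaving only the interpolation-accuracy estimate above.
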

\begin{proof}
   Let $u_h \in V_h$ be given.
   We construct a function $\tilde{u} \in V_h$ element-by-element as follows.
   If a nodal point $\bm\xi_{\k,i}$ does not lie on a nonconforming edge (i.e. it is an interior node, $\bm\xi_{\k,i}\in\mathcal{I}(\k)$, or it lies on a conforming edge), then we set $\tilde{u}(\bm\xi_{\k,i}) = u_h(\bm\xi_{\k,i})$.
   It remains to define $\tilde{u}$ on the nonconforming edges of the mesh.
   Let $e$ denote a nonconforming edge, bordering coarse element $\k_0$, and fine elements $\k_1, \ldots, \k_n$.
   Let $p_e = \min_{i} p_{\k_i}$, and let $\bm\xi_j$ denote the $p_e+1$ Gauss-Lobatto nodes on $e$ (these are the \textit{conforming} nodal points defined at the beginning of Section \ref{sec:conf-approx}).
   Then, let $\tilde{u}_{\k_0,e}$ is chosen to be the degree-$p_e$ polynomial that interpolates $u_h|_{\k_0}$ at the points $\bm\xi_j$.
   For $i > 0$, if $p_{\k_i} = p_e$, then $\tilde{u}|_{\k_i,e} = u_h|_{\k_i,e}$.
   However, if $p_{\k_i} > p_e$, we choose $\tilde{u}|_{\k_i,e}$ to be a degree-$p_e$ interpolant as follows.
   Note that fewer than $p_e+1$ of the points $\bm\xi_j$ lie within $e\cap\k_i$.
   We select interpolation points $\bm\zeta_{i,j}$ that consist of those nodal points $\bm\xi_j$ that lie within $e \cap \k_i$, supplemented with additional Gauss-Lobatto points to obtain $p_e+1$ distinct points lying within $e \cap \k_i$.
   Then, $\tilde{u}_{\k_i,e}$ is chosen to interpolate $u_h|_{\k_i}$ at the $p_e+1$ points $\bm\zeta_{i,j}$.

   By this definition, we have $\tilde{u}|_\k(\bm\xi) = u|_\k(\bm\xi)$ for all conforming nodal points $\bm\xi$.
   Consequently, $\Qh \tilde{u} = \Qh u_h = u_c$.
   However, for any nonconforming interface, $\tilde{u}|_{e}$ has degree no higher than $p_e$.
   Therefore, we can apply the arguments of \cite[Proposition 5.2]{Houston2007} and \cite[Theorem 2.3]{Karakashian2003} to show that
   \[
      \| \tilde{u} - u_c \|_0^2
      \lesssim \sum_{e\in\Gamma} \frac{h_e}{p_e^2} \| \llb \tilde{u} \rrb \|_{0,e}^2.
   \]
   Additionally, we have
   \[
      \| u_h - u_c \|_0^2
      \lesssim \| u_h - \tilde{u} \|_0^2 + \| \tilde{u} - u_c \|_0^2
      \lesssim \| u_h - \tilde{u} \|_0^2 + \sum_{e\in\Gamma} \frac{h_e}{p_e^2} \| \llb \tilde{u} \rrb \|_{0,e}^2.
   \]
   By accuracy of the polynomial interpolant,
   \[
      \| u_h - \tilde{u} \|_0^2
      = \sum_\k \| u_h - \tilde{u} \|_{0,\k}^2
      \lesssim \sum_\k \frac{h_\k^2}{p_{\k'}^4} \| \nabla u_h \|_{0,\k}^2,
   \]
   where $p_{\k'}$ is the minimum polynomial degree of all elements $\k'$ neighboring $\k$ (note that we make that assumption that the ratio of polynomial degrees on neighboring elements is bounded, and so $p_{\k'} \eq p_\k$).

   It remains to estimate the term $\sum_{e\in\Gamma} \frac{h_e}{p_e^2} \| \llb \tilde{u} \rrb \|_{0,e}^2$.
   On a given edge $e$, we have
   \begin{align*}
      \| \llb \tilde{u} \rrb \|_{0,e}
      &= \| \llb \tilde{u} \rrb - \llb u_h \rrb + \llb u_h \rrb \|_{0,e}
      = \| \tilde{u}^- - \tilde{u}^+ - u_h^- + u_h^+ + u_h^- - u_h^+ \|_{0,e} \\
      &\leq \| \tilde{u}^- - u_h^- \|_{0,e} + \| \tilde{u}^- - u_h^- \|_{0,e} + \| \llb u_h \rrb \|_{0,e}.
   \end{align*}
   Again using accuracy of the interpolant and the trace inequality \eqref{eq:trace-ineq}, we obtain
   \[
      \frac{h_e}{p_e^2} \| \tilde{u}^\pm - u_h^\pm \|_{0,e}^2
      \lesssim \| \tilde{u} - u_h \|_{0,k^\pm}^2
      \lesssim \frac{h_e^2}{p_e^4} \| \nabla u_h \|,
   \]
   and so
   \[
      \sum_{e\in\Gamma} \frac{h_e}{p_e^2} \| \llb \tilde{u} \rrb \|_{0,e}^2
      \lesssim \sum_\k \frac{h_\k^2}{p_\k^4} \| \nabla u_h \|_{0,\k}^2
      + \sum_{e\in\Gamma} \frac{h_e}{p_e^2} \| \llb u_h \rrb \|_{0,e}^2,
   \]
   from which the conclusion \eqref{eq:Qh-jump-grad} follows.
\end{proof}

Now, we consider some special cases in which the stronger estimates of the form \eqref{eq:Qh-jump} hold.
The following lemmas are generalizations of the $hp$ estimates from \cite{Burman2007} to the case of nonconforming meshes.
Let $\nu$ denote a vertex of the mesh $\T$, and let $K_\nu$ denote the set of all elements $\k\in\T$ containing $\nu$ as a vertex.
A vertex is called \textit{hanging} if it is contained in an element of which it is not a vertex (i.e.\ $\nu \in \k$ where $\k \notin K_\nu$).
A vertex is called \textit{regular} if it not hanging.
That is, a vertex $\nu$ is regular if for each $\k$ such that $\nu\in\k$, we have $\k\in K_\nu$.
For a given vertex $\nu$, for each element $\k\in K_\nu$, there exists a Gauss--Lobatto node $\bm\xi_{\k,i}$ that is coincident with $\nu$.
Let $\Xi_\nu$ denote the set of all such Gauss--Lobatto nodes coincident with $\nu$.

\begin{lem} \label{lem:regular-vertex}
   Let $u_h \in V_h$ be given, and let $v_b = u_h - \Qh u_h$.
   Let $\nu$ be a regular vertex.
   Then,
   \[
      \sum_\k \sum_{\bm\xi_{\k,i} \in \Xi_\nu} \| v_b(\bm\xi_{\k,i}) \phi_{\k,i} \|_{0,\k}^2
      \lesssim \sum_{e \ni \nu} \frac{h_e}{p_e^2} \| \llb u_h \rrb \|_{0,e}^2.
   \]
\end{lem}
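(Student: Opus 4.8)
The plan is to reduce the statement to a local computation at a single regular vertex $\nu$, where the conforming node set $\Xi_\nu$ collapses to a single conforming value. Since $\nu$ is regular, every element $\k \in K_\nu$ has $\nu$ as a genuine vertex, so there is exactly one Gauss--Lobatto node $\bm\xi_{\k,i}$ in each such $\k$ coincident with $\nu$, and the conforming node at $\nu$ inherits the averaged value $\Qh u_h(\nu) = \frac{1}{|K_\nu|}\sum_{\k\in K_\nu} u_h|_\k(\nu)$. Therefore $v_b(\bm\xi_{\k,i}) = u_h|_\k(\nu) - \Qh u_h(\nu)$, and the sum on the left-hand side is, up to the norms $\|\phi_{\k,i}\|_{0,\k}^2$, a sum of squared deviations of the per-element values $u_h|_\k(\nu)$ from their average over $K_\nu$.

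First I would estimate $\|\phi_{\k,i}\|_{0,\k}^2$. The Gauss--Lobatto vertex basis function on the reference element has $L^2$ norm comparable to the corresponding Gauss--Lobatto quadrature weight, which for a corner node scales like $p_\k^{-4}$ (a product of two one-dimensional endpoint weights, each $\eq p_\k^{-2}$); pulling back through $T_\k$ introduces a factor $h_\k^2$, so $\|\phi_{\k,i}\|_{0,\k}^2 \eq h_\k^2/p_\k^4$. (Alternatively one can invoke Lemma~\ref{lem:quadrature} applied to the single-node function $v_b(\bm\xi_{\k,i})\phi_{\k,i}$ together with the discrete norm equivalence.) Thus the left-hand side is equivalent to $\sum_{\k\in K_\nu} \frac{h_\k^2}{p_\k^4}\, |u_h|_\k(\nu) - \Qh u_h(\nu)|^2$. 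Next I would use the elementary fact that the average minimizes the sum of squared deviations, so $\sum_{\k} |u_h|_\k(\nu) - \mathrm{avg}|^2 \lesssim \sum_{\k,\k'} |u_h|_\k(\nu) - u_h|_{\k'}(\nu)|^2$, where the double sum runs over pairs of elements in $K_\nu$; and since $\nu$ is regular, any two elements of $K_\nu$ are connected through a short chain of edges $e \ni \nu$, along each of which the difference of traces at $\nu$ is a pointwise value of the jump $\llb u_h \rrb_e$ at $\nu$. Hence $|u_h|_\k(\nu) - u_h|_{\k'}(\nu)|^2 \lesssim \sum_{e \ni \nu} |\llb u_h\rrb_e(\nu)|^2$, with the constant depending only on the number of elements meeting at $\nu$.

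It then remains to control the pointwise jump value $|\llb u_h \rrb_e(\nu)|^2$ at the endpoint $\nu$ of the edge $e$ by the $L^2$-norm $\|\llb u_h\rrb\|_{0,e}^2$, with the correct $h_e/p_e^2$ scaling. This is the step I expect to be the main obstacle, since a naive inverse (pointwise-from-$L^2$) estimate on a degree-$p_e$ polynomial on an edge of length $h_e$ costs a factor $p_e^2/h_e$ — giving exactly $|\llb u_h\rrb_e(\nu)|^2 \lesssim \frac{p_e^2}{h_e}\|\llb u_h\rrb\|_{0,e}^2$, which when multiplied by $h_\k^2/p_\k^4 \eq h_e^2/p_e^4$ and summed yields $\sum_{e\ni\nu} \frac{h_e}{p_e^2}\|\llb u_h\rrb\|_{0,e}^2$, precisely the claimed bound. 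One should check that the inverse estimate for the boundary value of a trace polynomial holds with the stated scaling; this follows from a reference-element inverse inequality (the $L^\infty$ norm of a degree-$p$ polynomial on $[0,1]$ is $\lesssim p$ times its $L^2$ norm, so the squared endpoint value is $\lesssim p^2$ times the squared $L^2$ norm) combined with the scaling $ds = h_e\, d\hat s$ on the edge. Assembling these three estimates — the basis-norm scaling, the average/chain argument using regularity of $\nu$, and the edge inverse estimate — gives \eqref{eq:regular-vertex}; using $p_\k \eq p_e$ for edges and elements meeting at $\nu$ (the bounded-polynomial-degree-ratio assumption) throughout to reconcile the $p_\k$ and $p_e$ factors.
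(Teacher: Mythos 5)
Your proposal is correct and follows essentially the same route as the paper: scale the basis-function norm to $h_\k^2/p_\k^4$ times the pointwise value at $\nu$, write $v_b|_\k(\nu)$ as a combination of the jumps $\llb u_h\rrb_e(\nu)$ over edges $e\ni\nu$ using the regularity of $\nu$, and bound the pointwise jump by $(p_e^2/h_e)\,\|\llb u_h\rrb\|_{0,e}^2$ via a one-dimensional trace/Markov inequality. The paper obtains the first step by applying the inverse trace inequality \eqref{eq:inv-trace-ineq} twice and the last by the trace inequality \eqref{eq:trace-ineq} restricted to an edge, whereas you invoke Gauss--Lobatto quadrature weights and a reference-element Markov estimate; these are interchangeable and yield identical scalings.
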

\begin{proof}
   Applying the inverse trace inequality \eqref{eq:inv-trace-ineq} twice, we obtain
   \[
      \| v_b(\bm\xi_{\k,i}) \phi_{\k,i} \|_{0,\k}^2
      \lesssim \frac{h_\k^2}{p_\k^4} \| v_b(\bm\xi_{\k,i}) \phi_{\k,i} \|_{0,\nu}^2.
   \]
   Recall that $\Qh u_h$ is defined by
   \[
      \Qh u_h(\nu) = \frac{1}{\card(K_\nu)} \sum_{\k' \in K_\nu} u_h|_{\k'}(\nu),
   \]
   and so
   \[
      v_b|_\k(\nu)
      = u_h|_\k(\nu) - \frac{1}{\card(K_\nu)} \sum_{\k' \in K_\nu} u_h|_{\k'}(\nu),
   \]
   which can be written as, for appropriate choice of coefficients $\alpha_e$,
   \[
      v_b|_\k(\nu) = \sum_{e\ni\nu} \alpha_e \llb u_h \rrb_e.
   \]
   By the trace inequality \eqref{eq:trace-ineq}, we have
   \[
      \| \llb u_h \rrb_e \|_{0,\nu}^2 \lesssim \frac{p_e^2}{h_e} \| \llb u_h \rrb \|_{0,e}^2,
   \]
   and the desired result follows.
\end{proof}

\begin{lem} \label{lem:conf-edge}
   Let $e$ be a conforming edge (i.e.\ no hanging vertex lies in the interior of $e$, denoted $\interior{e}$), and suppose that $p_{\k^-} = p_{\k^+}$, where $\k^-$ and $\k^+$ are the two elements containing $e$.
   Let $u_h \in V_h$ be given, and let $v_b = u_h - \Qh u_h$.
   Then,
   \[
      \sum_{\k\in\{\k^-,\k^+\}} \sum_{\bm\xi_{\k,i} \in \interior{e}} \| v_b(\bm\xi_{\k,i}) \phi_{\k,i} \|_{0,\k}^2
      \lesssim \frac{h_e}{p_e^2} \| \llb u_h \rrb \|_{0,e}^2.
   \]
\end{lem}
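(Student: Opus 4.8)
The plan is to mimic the structure of the proof of Lemma \ref{lem:regular-vertex}, but now working with the interior edge nodes $\bm\xi_{\k,i} \in \interior{e}$ rather than the vertex nodes. First I would use the inverse trace inequality \eqref{eq:inv-trace-ineq} to pass from the full element norm $\| v_b(\bm\xi_{\k,i}) \phi_{\k,i} \|_{0,\k}^2$ to the edge norm $\| v_b(\bm\xi_{\k,i}) \phi_{\k,i} \|_{0,e}^2$, picking up a factor $h_\k/p_\k^2$; since $\bm\xi_{\k,i}$ lies in the interior of $\k$'s boundary edge $e$, the basis function $\phi_{\k,i}$ vanishes at all interior Gauss--Lobatto nodes of $\k$, so \eqref{eq:inv-trace-ineq} applies. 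Summing over the interior edge nodes and using Corollary \ref{cor:quadrature-2} (applied to the restriction of $v_b$ to $e$, viewed as a one-dimensional function on the edge) gives
\[
   \sum_{\bm\xi_{\k,i} \in \interior{e}} \| v_b(\bm\xi_{\k,i}) \phi_{\k,i} \|_{0,\k}^2
   \lesssim \frac{h_e}{p_e^2} \sum_{\bm\xi_{\k,i}\in\interior e} \| v_b(\bm\xi_{\k,i}) \phi_{\k,i} \|_{0,e}^2
   \eq \frac{h_e}{p_e^2} \| \Pi_e v_b|_\k \|_{0,e}^2,
\]
where $\Pi_e$ denotes the interpolation onto the interior edge nodes. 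So the task reduces to controlling the interior-edge part of $v_b|_{\k^\pm}$ in the edge $L^2$ norm by $\| \llb u_h \rrb \|_{0,e}$.

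Next I would identify what $v_b|_\k$ looks like on $e$. Because $e$ is conforming and $p_{\k^-} = p_{\k^+} = p_e$, the conforming nodal points on $e$ are exactly the $p_e+1$ Gauss--Lobatto nodes shared by $\k^-$ and $\k^+$; at each such node $\bm\xi$, $\Qh u_h(\bm\xi) = \tfrac12(u_h|_{\k^-}(\bm\xi) + u_h|_{\k^+}(\bm\xi))$ (there is no hanging vertex in $\interior e$, so the averaging count is $2$ for interior edge nodes). Hence at an interior edge node $\bm\xi_{\k^-,i}$,
\[
   v_b|_{\k^-}(\bm\xi_{\k^-,i}) = u_h|_{\k^-}(\bm\xi_{\k^-,i}) - \tfrac12\big(u_h|_{\k^-}(\bm\xi_{\k^-,i}) + u_h|_{\k^+}(\bm\xi_{\k^-,i})\big)
   = \pm\tfrac12 \llb u_h \rrb_e \cdot \bm n_e^-\,(\bm\xi_{\k^-,i}),
\]
i.e.\ the interior-edge nodal values of $v_b|_{\k^\pm}$ are (up to sign) exactly half the nodal values of the jump $\llb u_h \rrb_e$ on those nodes. (Vertices of $e$ need not be regular, which is why the lemma restricts to $\interior e$ and does not claim anything at the endpoints.) Therefore $\Pi_e v_b|_{\k^\pm}$, as a polynomial of degree $p_e$ on $e$, is $\pm\tfrac12$ times the interpolant at those same nodes of the degree-$p_e$ jump $\llb u_h\rrb_e$ restricted to $e$; since both sides are polynomials of degree $\le p_e$ on $e$ and they agree at $p_e-1$ interior nodes, what remains is a boundary/trace term at the two endpoints. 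I would bound $\| \Pi_e v_b|_{\k} \|_{0,e}^2$ by $\| \llb u_h \rrb_e \|_{0,e}^2$ using the discrete-norm equivalence on the edge (Lemma \ref{lem:quadrature} / Corollary \ref{cor:quadrature-2} in 1D) together with the fact that dropping the endpoint contributions only decreases the relevant discrete sum.

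The main obstacle is the bookkeeping at the endpoints of $e$: $v_b|_\k$ at the vertices of $e$ is generally \emph{not} a simple multiple of $\llb u_h \rrb_e$ (it involves jumps across all edges meeting that vertex, possibly hanging), so I must be careful that the estimate only uses the \emph{interior} edge nodes and that the discrete-norm equivalence is applied to a function whose controlled degrees of freedom are precisely those interior nodes. Concretely, I expect to write $\Pi_e v_b|_\k = \tfrac12 \Pi_e \llb u_h \rrb_e + (\text{endpoint correction})$ and observe that $\Pi_e$ here already zeroes out endpoint values, so in fact $\Pi_e v_b|_\k$ and $\tfrac12 \Pi_e^{\mathrm{int}} \llb u_h \rrb_e$ have the same interior nodal data; then the 1D discrete norm equivalence gives $\| \Pi_e v_b|_\k\|_{0,e}^2 \lesssim \sum_{\bm\xi\in\interior e}|\llb u_h\rrb_e(\bm\xi)|^2 \cdot w_{\bm\xi} \lesssim \|\llb u_h\rrb_e\|_{0,e}^2$, where $w_{\bm\xi}$ are the Gauss--Lobatto weights on $e$, the last step being Lemma \ref{lem:quadrature}. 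Summing the two elements $\k\in\{\k^-,\k^+\}$ and combining with the inverse trace estimate from the first paragraph yields the claimed bound. Everything else is routine scaling and the already-available trace/inverse-trace inequalities.
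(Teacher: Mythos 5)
Your argument is correct and follows essentially the same route as the paper's proof: identify that at interior edge nodes $v_b^\pm(\bm\xi) = \pm\tfrac12\llb u_h\rrb(\bm\xi)$, pass from element to edge norms via the inverse trace inequality \eqref{eq:inv-trace-ineq}, and conclude with the one-dimensional Gauss--Lobatto quadrature norm equivalence (Lemma~\ref{lem:quadrature}). The paper just applies these three ingredients a bit more tersely and in a slightly different order; your extra discussion of $\Pi_e$ and the endpoint bookkeeping is sound but is not something the paper needs to spell out.
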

\begin{proof}
   Let $\bm\xi_i \in \interior{e}$ denote a Gauss--Lobatto node lying on the interior of $e$, and let $\phi^\pm$ denote the corresponding basis functions.
   Then, $\Qh u_h(\bm\xi_i) = \frac{1}{2} (u_h^-(\bm\xi_i) + u_h^+(\bm\xi_i))$ and so $v_b^\pm(\bm\xi_i) = \pm\frac{1}{2} \llb u_h \rrb (\bm\xi_i)$.
   By Lemma \ref{lem:quadrature} and the inverse trace inequality,
   \begin{align*}
      \sum_{\k\in\{\k^-,\k^+\}} \sum_{\bm\xi_{\k,i} \in \interior{e}} \| v_b(\bm\xi_{\k,i}) \phi_{\k,i} \|_{0,\k}^2
      &\eq \sum_{\k\in\{\k^-,\k^+\}} \sum_{\bm\xi_{\k,i} \in \interior{e}} \| \llb u_h \rrb (\bm\xi_{\k,i}) \phi_{\k,i} \|_{0,\k}^2 \\
      &\eq \sum_{\k\in\{\k^-,\k^+\}} \sum_{\bm\xi_{\k,i} \in \interior{e}} \frac{h_e}{p_e^2} \| \llb u_h \rrb (\bm\xi_{\k,i}) \phi_{\k,i} \|_{0,e}^2
      \eq \frac{h_e}{p_e^2} \| \llb u_h \rrb \|_{0,e}^2. \qedhere
   \end{align*}
\end{proof}

We will call a vertex $\nu$ of the mesh $\T$ an \textit{$hp$-vertex} if $\nu$ is a hanging vertex, and the elements containing $\nu$ do not all have the same polynomial degree.
In the case that an edge does not contain any $hp$-vertices, we can apply the result of Karakashian and Pascal \cite{Karakashian2003} to obtain the following result.
\begin{lem} \label{lem:no-hp}
   Let $e$ denote an edge that does not contain any $hp$-vertices.
   Let $u_h \in V_h$ be given, and let $v_b = u_h - \Qh u_h$.
   Then,
   \[
      \sum_{\k \ni e} \| v_b \|_{0,k}^2
      \lesssim  \frac{h_e}{p_e^2} \| \llb u_h \rrb \|_{0,e}^2.
   \]
\end{lem}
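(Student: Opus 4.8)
The plan is to use the hypothesis that $e$ carries no $hp$-vertex to reduce the local mesh configuration around $e$ to one that has already been treated in the literature, and then to quote the corresponding Oswald-type estimate. As in Lemma~\ref{lem:conf-edge}, of which this is the nonconforming generalization, the relevant quantity is the part of $v_b$ attached to the Gauss--Lobatto nodes lying on $\interior{e}$; the contributions of the nodes at the vertices of $e$ are accounted for separately via Lemma~\ref{lem:regular-vertex}. I would split into the cases where $e$ is conforming and where it is nonconforming.

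If $e$ is conforming, with neighbours $\k^-$ and $\k^+$, then when $p_{\k^-}=p_{\k^+}$ the bound is exactly Lemma~\ref{lem:conf-edge}, while when $p_{\k^-}\neq p_{\k^+}$ — which is still compatible with the absence of $hp$-vertices, since the endpoints of $e$ may be regular — one invokes instead the variable-degree conforming averaging estimate of Houston, Sch\"otzau and Wihler \cite[Proposition~5.2]{Houston2007}, whose contribution on $e$ is again $\lesssim (h_e/p_e^2)\,\|\llb u_h\rrb\|_{0,e}^2$.

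The substantive case is when $e$ is nonconforming. Write $\k_0$ for the coarse element having $e$ as a full edge and $\k_1,\dots,\k_n$ ($n\ge 2$) for the fine elements meeting $e$, listed in order along $e$, so that for $1\le j\le n-1$ the vertex $\nu_j$ shared by $\k_j$ and $\k_{j+1}$ lies in $\interior{e}$ and is hanging. By hypothesis $\nu_j$ is not an $hp$-vertex, so, being hanging, it forces all elements containing it to share a common polynomial degree; in particular $p_{\k_0}=p_{\k_j}=p_{\k_{j+1}}$. Taking the union of these constraints over $1\le j\le n-1$ gives $p_{\k_0}=p_{\k_1}=\dots=p_{\k_n}=:p_e$, so the star of $e$ carries a single polynomial degree. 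One is then precisely in the setting of Karakashian and Pascal \cite{Karakashian2003}, and their averaging estimate, specialized to the edge $e$, yields the asserted bound. The only jump entering is $\llb u_h\rrb|_e$: once the vertex contributions have been removed, the nodal data of $v_b$ attached to $\interior{e}$ is, elementwise, an explicit bounded linear combination of pointwise values of $\llb u_h\rrb$ on sub-edges of $e$ — even at the hanging vertices $\nu_j$, where the jump across the edge transversal to $e$ equals, at that point, the difference of the jumps on $e\cap\k_j$ and on $e\cap\k_{j+1}$ — and one finishes with the trace inequality \eqref{eq:trace-ineq} and Lemma~\ref{lem:quadrature} applied on $e$, as in the proof of Lemma~\ref{lem:conf-edge}.

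I expect the genuine obstacle to be the structural reduction in the nonconforming case: making rigorous that ``no $hp$-vertex on $e$'' propagates a single polynomial degree across the whole star of $e$, and the bookkeeping that isolates $\llb u_h\rrb|_e$. The latter is delicate because on the fine elements $\k_j$ the conforming nodes of $e$ form only a proper subset of the Gauss--Lobatto nodes of $\k_j$ lying on $e$, which is exactly the point at which one needs the careful averaging argument of \cite{Karakashian2003} (equivalently the auxiliary construction in the proof of Lemma~\ref{lem:l2-jump-h1}) rather than the elementary computation of Lemma~\ref{lem:conf-edge}. Once the patch is uniform in $p$, the estimate itself is classical.
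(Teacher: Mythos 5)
The paper offers no proof of Lemma~\ref{lem:no-hp} beyond the single lead-in sentence citing Karakashian and Pascal \cite{Karakashian2003}, so your proposal is essentially an expansion of the same strategy. The central structural observation you isolate — that the absence of $hp$-vertices on $e$ forces every element in the star of $e$ to share one polynomial degree (chaining the constraint through the interior hanging vertices $\nu_1,\dots,\nu_{n-1}$), so that the nonconforming case literally reduces to the uniform-degree setting of \cite{Karakashian2003} — is exactly the reduction the paper's citation silently relies on, and you have stated it correctly. Your addition of the conforming, variable-degree case via \cite[Proposition~5.2]{Houston2007} is in fact a gap the paper's one-liner does not cover: a conforming edge with $p_{\k^-}\neq p_{\k^+}$ and regular endpoints has no $hp$-vertices, is included in the definition of $V_J$, yet lies outside the scope of both Lemma~\ref{lem:conf-edge} and the uniform-degree result of \cite{Karakashian2003}; so this branch of your argument is a genuine improvement in rigor. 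Two small things to tighten: (i) the endpoints of $e$ need not be regular — they may be hanging but not $hp$ — in which case Lemma~\ref{lem:regular-vertex} does not apply verbatim, though the same averaging argument goes through because the elements around such a vertex share one degree; (ii) the statement as printed bounds the full element norm $\|v_b\|_{0,\k}^2$, which cannot be controlled by jumps on $e$ alone, and must be read (consistently with Lemmas~\ref{lem:regular-vertex} and~\ref{lem:conf-edge}) as the contribution of the Gauss--Lobatto nodes lying on $e$, which is the reading your proof implicitly adopts.
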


\begin{rem}\label{rem:cases}
   In practice, we observe that certain nonconforming interfaces not included in the above three cases also satisfy estimate \eqref{eq:Qh-jump}.
   In particular, this is observed for interfaces for which the coarse element has polynomial degree no higher than any of the fine elements containing the given face.
   This is corroborated by numerical examples shown in Section \ref{sec:results}, however it is not implied as a consequence of the above estimates.
\end{rem}

\subsection{Preconditioning}

Let $P_C$ denote elliptic projection onto $V_C$, and likewise let $P_B$ denote elliptic projection onto $V_B$.
We will show that the additive Schwarz system
\[
   P = P_B + P_C
\]
is uniformly well-conditioned with respect to $h$, $p$, and $\eta$.
First, we show that the decomposition $V_h = V_B + V_C$ is stable.
\begin{lem} \label{lem:stable-decomp}
   Let $u_h \in V_h$, and write $u_h = u_b + u_c$, where $u_c = \Qh u_h$.
   Then,
   \[
      \A(u_b, u_b) \lesssim \A(u_h, u_h)
   \]
   and
   \[
      \A(u_c, u_c) \lesssim \A(u_h, u_h).
   \]
\end{lem}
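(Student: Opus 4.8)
The plan is to estimate $u_c = \Qh u_h$ first, and then obtain the bound on $u_b = u_h - u_c$ by the triangle inequality in the $\DG$-norm together with the continuity/coercivity equivalence $\A(v,v) \eq \|v\|_{\DG}^2$. For the conforming piece, I would use Lemma~\ref{lem:eigval-2}: since $u_c \in V_C$ has $\llb u_c \rrb = 0$, only the volume term survives, so
\[
   \A(u_c, u_c) \lesssim \sum_{\k\in\T} \frac{p_\k^4}{h_\k^2} \| u_c \|_{0,\k}^2
   \lesssim \sum_{\k\in\T} \frac{p_\k^4}{h_\k^2} \| u_h \|_{0,\k}^2 + \sum_{\k\in\T} \frac{p_\k^4}{h_\k^2} \| u_h - u_c \|_{0,\k}^2.
\]
The first sum is bounded by $\A(u_h,u_h)$ via the right-hand inequality of Lemma~\ref{lem:eigenvalue} (up to the $\eta$ factor — here one must be slightly careful, see below). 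For the second sum I would invoke Lemma~\ref{lem:l2-jump-h1}: the key point is that the estimate \eqref{eq:Qh-jump-grad} carries exactly the weights $h_\k^2/p_\k^4$ on the gradient term and $h_e/p_e^2$ on the jump term, which are precisely the reciprocals of the weights appearing when we multiply through by $p_\k^4/h_\k^2$ (resp.\ use the penalty $\sigma \eq \eta p_e^2/h_e$). Thus
\[
   \sum_{\k} \frac{p_\k^4}{h_\k^2} \| u_h - u_c \|_{0,\k}^2
   \lesssim \sum_\k \| \nabla u_h \|_{0,\k}^2 + \sum_{e\in\Gamma} \eta\frac{p_e^2}{h_e} \| \llb u_h \rrb \|_{0,e}^2
   \lesssim \|u_h\|_{\DG}^2 \lesssim \A(u_h,u_h),
\]
where I have used $\eta \geq \eta_0$ to absorb the jump term into $\|\cdot\|_{\DG}$. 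This gives $\A(u_c,u_c) \lesssim \A(u_h, u_h)$. Then $u_b = u_h - u_c$ satisfies $\|u_b\|_{\DG} \leq \|u_h\|_{\DG} + \|u_c\|_{\DG} \lesssim \|u_h\|_{\DG}$, and coercivity/continuity converts this back to $\A(u_b,u_b) \lesssim \A(u_h,u_h)$.

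The main obstacle I anticipate is the bookkeeping with the penalty parameter $\eta$ and the $p^4$ versus $p^2$ powers. The naive application of Lemma~\ref{lem:eigenvalue} to $\sum p_\k^4/h_\k^2 \|u_h\|_{0,\k}^2$ picks up a spurious factor $\eta$, which is fine for the first sum (it is still $\lesssim \eta \A \lesssim$ something, but we want a bound uniform in $\eta$ — actually the upper bound in Lemma~\ref{lem:eigenvalue} reads $\A(u_h,u_h) \lesssim \sum \eta p_\k^4/h_\k^2 \|u_h\|_{0,\k}^2$, i.e.\ the wrong direction). So I would instead route the first sum through Lemma~\ref{lem:eigval-2} applied in reverse is not available either; the cleanest fix is to note we never actually need $\sum p_\k^4/h_\k^2\|u_h\|_{0,\k}^2 \lesssim \A(u_h,u_h)$ — rather, we split differently: write $\|u_c\|_{0,\k} \leq \|u_h\|_{0,\k} + \|u_h-u_c\|_{0,\k}$ only after first bounding $\A(u_c,u_c)$ more carefully, or better, observe that for $u_c$ conforming one has the sharper $H^1$-type bound $\A(u_c,u_c) = \|\nabla u_c\|_0^2 \lesssim \|\nabla_h u_h\|_0^2 + (\text{jump terms})$ directly from the definition of $\Qh$ and a standard inverse-estimate argument à la Lemmas~\ref{lem:regular-vertex}–\ref{lem:no-hp}. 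This avoids the $\eta$ pitfall entirely.

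Concretely, then, the safest route for the $u_c$ bound is: (i) since $u_c \in V_C \subset H^1$, $\A(u_c,u_c) = \|\nabla u_c\|_0^2$; (ii) bound $\|\nabla u_c\|_0^2 \lesssim \|\nabla_h u_h\|_0^2 + \|\nabla_h(u_h - u_c)\|_0^2$; (iii) apply an inverse inequality elementwise, $\|\nabla_h(u_h-u_c)\|_{0,\k}^2 \lesssim (p_\k^4/h_\k^2)\|u_h-u_c\|_{0,\k}^2$, and then Lemma~\ref{lem:l2-jump-h1} as above; (iv) conclude $\A(u_c,u_c) \lesssim \|\nabla_h u_h\|_0^2 + \sum_e \eta p_e^2/h_e \|\llb u_h\rrb\|_{0,e}^2 = \|u_h\|_{\DG}^2 \lesssim \A(u_h,u_h)$. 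The $u_b$ bound then follows from the triangle inequality in $\|\cdot\|_{\DG}$ exactly as described, using coercivity one final time. I would present the $u_c$ estimate first and derive the $u_b$ estimate as an immediate corollary.
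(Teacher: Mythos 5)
Your ``safest route'' is correct, but it is organized differently from the paper's. The paper bounds $u_b = u_h - u_c$ directly: it applies the second statement of Lemma~\ref{lem:eigval-2} with $w_h = u_c$ (which is conforming, so $\llb u_c\rrb = 0$), giving
\[
\A(u_h - u_c, u_h - u_c) \lesssim \sum_{\k}\frac{p_\k^4}{h_\k^2}\|u_h - u_c\|_{0,\k}^2 + \sum_e \eta\frac{p_e^2}{h_e}\|\llb u_h\rrb\|_{0,e}^2,
\]
then absorbs the volume sum with Lemma~\ref{lem:l2-jump-h1}, and obtains the $u_c$ bound last by the triangle inequality. You invert the order: you estimate $u_c$ first, using $\A(u_c,u_c) = \|\nabla u_c\|_0^2$ on the conforming subspace, a gradient triangle inequality, and an elementwise polynomial inverse estimate $\|\nabla_h(u_h - u_c)\|_{0,\k}^2 \lesssim (p_\k^4/h_\k^2)\|u_h - u_c\|_{0,\k}^2$ — a standard bound, though not explicitly recorded in the paper — before arriving at the same application of Lemma~\ref{lem:l2-jump-h1}. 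The two routes are equivalent in substance: the crux in both is that the weights $h_\k^2/p_\k^4$ and $h_e/p_e^2$ appearing in \eqref{eq:Qh-jump-grad} are exactly reciprocal to those produced by the inverse/eigenvalue estimates, so they cancel. The paper's version is marginally more economical because Lemma~\ref{lem:eigval-2} already packages the inverse inequality inside the eigenvalue bound, whereas your presentation makes the cancellation mechanism slightly more explicit. Your diagnosis of why a naive application of Lemma~\ref{lem:eigenvalue} fails (it gives the inequality in the wrong direction, with a spurious $\eta$) is accurate, and the paper indeed sidesteps this issue in the same way you ultimately do, by never needing $\sum_\k (p_\k^4/h_\k^2)\|u_h\|_{0,\k}^2 \lesssim \A(u_h,u_h)$.
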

\begin{proof}
   By the estimates in Lemma \ref{lem:l2-jump-h1} and Lemma \ref{lem:eigval-2}, we have
   \begin{align*}
      \A(u_h - u_c, u_h - u_c)
      &\lesssim \sum_{\k\in\T} \frac{p_\k^4}{h_\k^2} \| u_h - \Qh u_h \|_{0,\k}^2
      + \sum_{e\in\Gamma} \eta \frac{p_\k^2}{h_\k} \| \llb u_h \rrb \|_{0,e}^2 \\
      &\lesssim \| \nabla_h u_h \|_{0,\k}^2 + \sum_{e\in\Gamma} \eta \frac{h}{p^2} \| \llb u_h \rrb \|_{0,e}^2 \\
      &\eq \| u_h \|_{\DG}^2 \lesssim \A(u_h, u_h),
   \end{align*}
   proving the first assertion.
   The second assertion follows by writing $u_c = (u_h - u_c) + u_c$ and using the triangle inequality.
\end{proof}

\begin{thm}
   The iterative condition number of $P$ satisfies
   \[
      \kappa(P) = \lmax(P)/\lmin(P) \eq 1.
   \]
\end{thm}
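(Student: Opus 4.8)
The plan is to invoke the standard additive Schwarz theory: $\kappa(P) \eq 1$ follows from two ingredients, a stable decomposition estimate (the "$C_0$" or lower bound) and a boundedness/strengthened-Cauchy--Schwarz estimate (the upper bound). Since there are only two subspaces $V_B$ and $V_C$ and the projections $P_B$, $P_C$ are \emph{exact}, the upper bound $\lmax(P) \lesssim 1$ is essentially automatic: for any $u_h = w_b + w_c$ one has $\A(P u_h, u_h) = \A(P_B u_h, u_h) + \A(P_C u_h, u_h)$, and since $P_B, P_C$ are $\A$-orthogonal projections each term is bounded by $\A(u_h,u_h)$, giving $\lmax(P) \le 2$. (Equivalently, this is the trivial $N_c = 1$ colouring bound for a two-term decomposition.)

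The substantive part is the lower bound $\lmin(P) \gtrsim 1$. By Lemma~\ref{lem:inf-identity}, specifically \eqref{eq:Pinv-inf-identity},
\[
   \A(P^{-1} u_h, u_h) = \inf_{\substack{v_b \in V_B,\ v_c \in V_C \\ v_b + v_c = u_h}} \left( \A(v_b, v_b) + \A(v_c, v_c) \right),
\]
so it suffices to exhibit \emph{one} admissible splitting $u_h = u_b + u_c$ with $\A(u_b,u_b) + \A(u_c,u_c) \lesssim \A(u_h,u_h)$; then $\A(P^{-1}u_h,u_h) \lesssim \A(u_h,u_h)$, i.e.\ $\lmin(P) \gtrsim 1$. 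The natural choice is exactly the one analyzed in Lemma~\ref{lem:stable-decomp}: take $u_c = \Qh u_h \in V_C$ and $u_b = u_h - u_c$. I must first check this splitting is admissible, i.e.\ that $u_b \in V_B$: since $\Qh u_h$ agrees with $u_h$ at every conforming nodal point, and in particular (for interior Gauss--Lobatto nodes, which are always conforming nodal points) $u_b$ vanishes at every interior node of every element, so $u_b \in V_B$ by definition. Lemma~\ref{lem:stable-decomp} then gives $\A(u_b,u_b) \lesssim \A(u_h,u_h)$ and $\A(u_c,u_c) \lesssim \A(u_h,u_h)$, which is precisely what is needed.

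Combining the two bounds, $\lmin(P) \gtrsim 1$ and $\lmax(P) \lesssim 1$, yields $\kappa(P) = \lmax(P)/\lmin(P) \eq 1$. The only real obstacle in this argument is the stable-decomposition estimate, and that work has already been front-loaded into Lemma~\ref{lem:l2-jump-h1} (the refined Jackson-type bound on $\|u_h - \Qh u_h\|_0$, controlling the conforming-approximation error by $\|\nabla_h u_h\|_0$ and the scaled jumps) together with the eigenvalue/inverse-estimate bookkeeping in Lemma~\ref{lem:eigval-2}; Lemma~\ref{lem:stable-decomp} packages these into exactly the form the Schwarz theory consumes. So the proof of the theorem itself is short: verify admissibility of the $\Qh$-splitting, quote Lemma~\ref{lem:inf-identity} and Lemma~\ref{lem:stable-decomp} for the lower bound, and note the triviality of the upper bound for a two-term exact-projection decomposition.
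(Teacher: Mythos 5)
Your proposal is correct and follows the same route as the paper: $\lmax(P)\le 2$ from $P_B,P_C$ being $\A$-orthogonal projections, and $\lmin(P)\gtrsim 1$ via the infimum identity \eqref{eq:Pinv-inf-identity} combined with the stable decomposition of Lemma~\ref{lem:stable-decomp}. The one small addition you make---explicitly verifying that $u_b = u_h - \Qh u_h$ actually lies in $V_B$ because $\Qh$ reproduces values at interior Gauss--Lobatto nodes---is a detail the paper leaves implicit, and is worth including.
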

\begin{proof}
   Since both $P_B$ and $P_C$ are projections, we have $\lmax(P) \leq 2$.
   To establish the bound for the minimum eigenvalue of $P$, we use identity \eqref{eq:Pinv-inf-identity} from Lemma \ref{lem:inf-identity},
   \[
      \A(P^{-1} v_h, v_h) = \inf_{v_h = v_b + v_c} \left( \A(v_b, v_b) + \A(v_c, v_c) \right).
   \]
    Lemma \ref{lem:stable-decomp} shows that $V_B + V_C$ is a stable decomposition of $V_h$, i.e.\ for all $v_h \in V_h$, there exist $v_b \in V_B$, $v_c \in V_C$ such that $v_h = v_b + v_c$ and
   \[
      \A(v_b, v_b) + \A(v_c, v_c) \lesssim \A(v_h, v_h).
   \]
   Consequently, we have
   \[
      \A(P^{-1} v_h, v_h) \lesssim \A(v_h, v_h),
   \]
   and the desired result follows.
\end{proof}

The spaces $V_B$ and $V_C$ are themselves large, and the computation of $P_B$ and $P_C$ requires the inversion of the bilinear forms $\A_B$ and $\A_C$, respectively.
This cost is clearly prohibitive, and so we seek to replace $P_B$ and $P_C$ with approximations $T_B$ and $T_C$.
On the conforming space $V_C$, note that $\A_C$ corresponds to a standard $H^1$-conforming discretization, and so we may replace $\A_C^{-1}$ with a uniform preconditioner for the conforming problem.
In this work, we use a low-order refined matrix-free preconditioner, which is described in greater detail in Section \ref{sec:lor}.
In principle, any uniform preconditioner for $\A_C$ may be used, and in the remainder of this section we will assume that $\A( T_C^{-1} u_c, u_c ) \eq \A(u_c, u_c)$ for all $u_c \in V_C$.

The projection $P_B$ onto the boundary space $V_B$ is approximated using a further space decomposition.
We decompose the space $V_B$ as the sum of yet-to-be-defined subspaces
\begin{equation} \label{eq:VB-subspaces}
   V_B
   = V_E + V_J,
   \qquad V_E = \sum_{e} V_e,
   \qquad V_J = \sum_j V_j,
\end{equation}
and define the corresponding approximate projection $T_B$ by
\begin{equation} \label{eq:TB}
   T_B = T_E + T_J,
   \qquad T_E = \sum_e P_e,
   \qquad T_J = \sum_j P_j.
\end{equation}

We begin by defining the space $V_J$.
The subscript $J$ is used to indicate that the approximate projection $T_J$ onto $V_J$ will be a simple point Jacobi method.
For this reason, it is advantageous to choose $V_J$ to be as large as possible while still obtaining a stable decomposition.
Let $V_J$ consist of any degree of freedom coincident with a regular mesh vertex, lying on the interior of a conforming edge with uniform polynomial degrees, or lying on an edge which contains no $hp$-vertices (see the definitions in Section \ref{sec:conf-approx}).
In other words, $V_J$ is designed to consist of those freedom for which the stronger approximation estimate \eqref{eq:Qh-jump} holds for the Oswald operator $\Qh$.
As noted in Remark \ref{rem:cases}, the conditions above are not exhaustive, and in practice $V_J$ can be chosen to also include those degrees of freedom lying on edges for which the coarse element has polynomial degree no higher than any of the neighboring fine elements.
For the $j$th degree of freedom in the space $V_J$, let $V_j$ denote the corresponding one-dimensional subspace, so that $V_J$ can be written as the direct sum $V_j = \sum_j V_j$.

\begin{figure}
   \centering
   \includegraphics{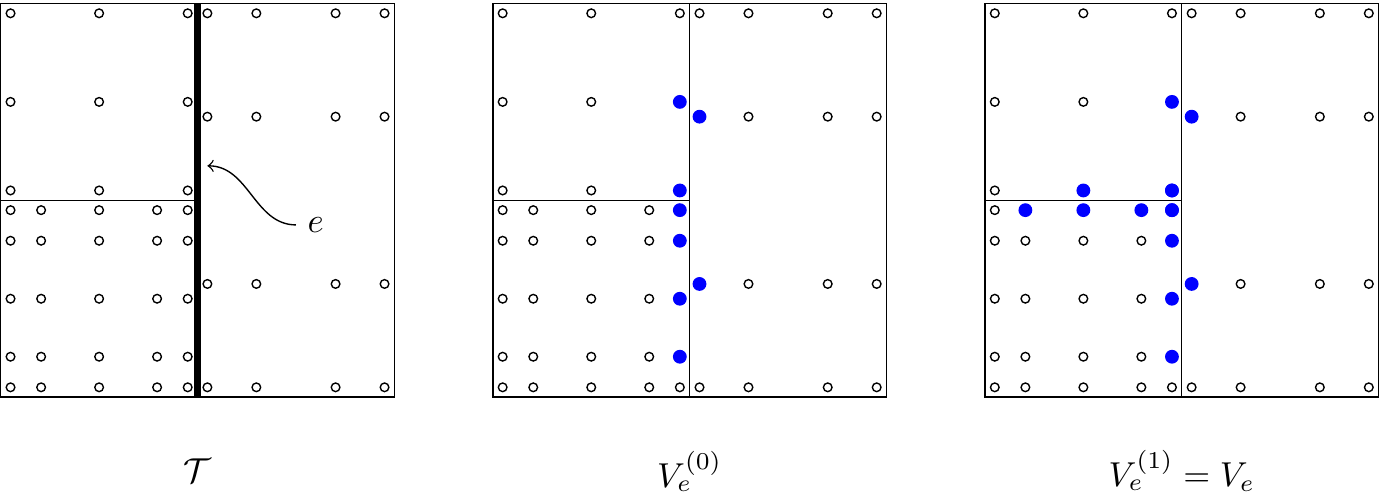}
   \caption{Example of the creation of the subspaces $V_e$ on a non-conforming mesh with variable polynomial degrees.
   The subspaces are spanned by the basis functions corresponding to the indicated nodes.
   Note that nodes that are coincident with regular mesh vertices are omitted.}
   \label{fig:V_e}
\end{figure}

Now, we define the spaces $V_e$.
For every edge $e\in\Gamma$ define a (potentially empty) subspace $V_e$ according to the following process:
\begin{itemize}
   \item $V_e^{(0)}$ is given as the span of all basis functions $\phi_{\k,i} \notin V_J$ whose corresponding Gauss--Lobatto node $\bm\xi_{\k,i}$ lies on the edge $e$.
   \item $V_e^{(i+1)}$ is defined as the span of all basis functions $\phi_{\k,i} \in V_B, \phi_{\k,i} \notin V_J$ such that $\llb \phi_{\k,i} \rrb \cdot \llb v \rrb \not\equiv 0$ for some $v \in V_e^{(i)}$.
   \item The iteration terminates when $V_e^{(i+1)} = V_e^{(i)}$, at which point we set $V_e = V_e^{(i)}$.
\end{itemize}
An example of this process is illustrated in Figure \ref{fig:V_e}.
For any pair of spaces $V_e$ and $V_{e'}$, either $V_e = V_{e'}$ or $V_e \cap V_{e'} = \{ 0 \}$, and in such a case we may simply omit one of the two spaces.
So, without loss of generality, we may assume that the spaces $V_e$ are disjoint.
Furthermore, $V_e \cap V_J = \{ 0 \}$ for all $e$ by construction, and so the decomposition $V_B = \sum_e V_e + \sum_j V_j$ is in fact a direct sum.

\begin{rem}
   The process described above is guaranteed to terminate, however the spaces $V_e$ could, in principle, be quite large.
   In practice, this occurs only in pathological cases, and in realistic cases these spaces remain relatively small.
   Furthermore, we observe in the numerical results in Section \ref{sec:random-refinement} that for 1-irregular meshes, the dimension of the spaces $V_e$ remain bounded with increasing refinements.
\end{rem}

As a consequence of this choice of subspaces, we have the following result concerning the stability of the decomposition.

\begin{lem} \label{lem:j-e-stable-decomp}
   Let $u_h \in V_h$ be given, and let $v_b = u_h - \Qh u_h \in V_B$.
   Let $v_b = \sum_e v_e + \sum_j v_j$ be the unique representation of $v_b$ in the subspace decomposition \eqref{eq:VB-subspaces}.
   Then,
   \begin{equation} \label{eq:j-l2-jump}
      \sum_\k \frac{p_\k^4}{h_\k^2} \sum_j \| v_j \|_{0,\k}^2
      \lesssim \sum_\k \sum_{e'\in\partial\k} \frac{p_{\k}^2}{h_{\k}} \| \llb u_h \rrb \|_{0,e'}^2,
   \end{equation}
   and
   \[
      \sum_{e'\in\Gamma} \frac{p_{e'}^2}{h_{e'}} \left(
         \sum_e \| \llb v_e \rrb \|_{0,e'}^2
         + \sum_j \| \llb v_j \rrb \|_{0,e'}^2
      \right)
      \lesssim \sum_{e'\in\Gamma} \frac{p_{e'}^2}{h_{e'}} \| \llb u_h \rrb \|_{0,e'}^2.
   \]
\end{lem}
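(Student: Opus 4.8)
The plan is to prove the two bounds in turn, deriving most of the second from the first together with the near-orthogonality designed into the spaces $V_e$. Write $S(w) := \sum_{e'\in\Gamma}\frac{p_{e'}^2}{h_{e'}}\|\llb w\rrb\|_{0,e'}^2$ (the common factor $\eta$ plays no role); quasi-uniformity and the bounded ratio of neighbouring degrees make $p_\k^4/h_\k^2$ comparable to $p_{e'}^4/h_{e'}^2$ over any vertex star or single edge, so the right-hand side of \eqref{eq:j-l2-jump} is equivalent to $S(u_h)$. Since $V_B=\sum_e V_e+\sum_j V_j$ merely partitions the nodal basis, each $v_j$ equals $v_b(\bm\xi_{\k,i})\phi_{\k,i}$ for a single node, and $\sum_e v_e = v_b-\sum_j v_j$.

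For \eqref{eq:j-l2-jump}: by the definition of $V_J$ the node carrying $v_j$ is coincident with a regular vertex, lies in the interior of a conforming edge of uniform degree, or lies on an edge with no $hp$-vertices. I would split the dofs of $V_J$ into these three families and apply, respectively, Lemma \ref{lem:regular-vertex}, Lemma \ref{lem:conf-edge} and Lemma \ref{lem:no-hp} (for the third, Lemma \ref{lem:quadrature} passes from $\sum_j\|v_j\|_{0,\k}^2$ to $\|v_b\|_{0,\k}^2$). Each lemma yields a bound of the shape $\sum_j\|v_j\|_{0,\k}^2\lesssim (h_e/p_e^2)\|\llb u_h\rrb\|_{0,e}^2$ on the relevant patch; multiplying by the locally constant weight $p_\k^4/h_\k^2$ turns $h_e/p_e^2$ into $p_e^2/h_e$, and summing over all vertices and edges (each mesh edge counted boundedly often) gives \eqref{eq:j-l2-jump}.

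For the second estimate, write it as $\sum_j S(v_j)+\sum_e S(v_e)\lesssim S(u_h)$. The term $\sum_j S(v_j)$ is easy: $\phi_{\k,i}$ has nonzero jump on only a bounded number of mesh edges (those contained in an edge of $\k$ through $\bm\xi_{\k,i}$), so by \eqref{eq:trace-ineq} and comparability $S(v_j)\lesssim (p_\k^4/h_\k^2)\|v_j\|_{0,\k}^2$, and summing over $j$ and invoking \eqref{eq:j-l2-jump} finishes it. For $\sum_e S(v_e)$ I would use the key structural fact built into the construction: if $V_e\ne V_{e''}$ then $\llb\phi\rrb\cdot\llb\psi\rrb\equiv 0$ on every mesh edge for all basis functions $\phi\in V_e$, $\psi\in V_{e''}$ — otherwise a later iteration of the construction would absorb $\psi$ into $V_e$ (every basis function of $V_{e''}$ lies outside $V_J$ and so is eligible). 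Thus, on each edge the jumps $\{\llb v_e\rrb\}_e$ are pairwise $L^2$-orthogonal, so that
\[
   \sum_e S(v_e)=S\left(\sum_e v_e\right)=S\left(v_b-\sum_j v_j\right)\le 2S(v_b)+2S\left(\sum_j v_j\right)=2S(u_h)+2S\left(\sum_j v_j\right),
\]
using $\llb\Qh u_h\rrb=0$. The remaining task — and the main obstacle — is to bound $S(\sum_j v_j)$: on a given edge $e'$ the number of $V_J$-dofs contributing to $\sum_j\llb v_j\rrb_{e'}$ need not be bounded, so one cannot simply pass to $\sum_j S(v_j)$.

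This is handled by a short case analysis over the edges $e'\in\Gamma$. Let $V_{(e')}$ be the (unique) space among the $V_e$ collecting every non-$V_J$ basis function with nonzero jump on $e'$ (empty if there is none); then $\llb u_h\rrb_{e'}=\llb v_{(e')}\rrb_{e'}+\sum_j\llb v_j\rrb_{e'}$. If $V_{(e')}=\{0\}$ then $\sum_j\llb v_j\rrb_{e'}=\llb u_h\rrb_{e'}$, and $e'$ contributes exactly $\frac{p_{e'}^2}{h_{e'}}\|\llb u_h\rrb\|_{0,e'}^2$. Otherwise one checks — and here the bounded mesh irregularity and bounded degree ratios enter — that either (i) only a bounded number of $V_J$-dofs, namely corner dofs at regular vertices, jump on $e'$, so $\|\sum_j\llb v_j\rrb\|_{0,e'}^2\lesssim\sum_j\|\llb v_j\rrb\|_{0,e'}^2$ and the earlier $\sum_j S(v_j)$-bound applies; or (ii) $e'$ is a conforming edge of uniform degree with an $hp$-vertex endpoint $\nu$, in which case $\llb v_{(e')}\rrb_{e'}=(u_h|_{\k^-}(\nu)-u_h|_{\k^+}(\nu))\,\ell_\nu\,\bm n_{e'}^-$ (the two values of $\Qh u_h$ at $\nu$ cancelling by continuity, where $\ell_\nu$ is the corner nodal function on $e'$), and the equivalence of the Gauss--Lobatto discrete and exact $L^2$ norms on $e'$ gives $\|\llb v_{(e')}\rrb\|_{0,e'}^2\lesssim\|\llb u_h\rrb\|_{0,e'}^2$, whence $\|\sum_j\llb v_j\rrb\|_{0,e'}\lesssim\|\llb u_h\rrb\|_{0,e'}$. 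Summing the three cases over $\Gamma$ gives $S(\sum_j v_j)\lesssim S(u_h)$, and with it the second estimate. The most delicate (though routine) part is the enumeration of edge types and the various bounded-cardinality claims, all of which rest on the standing assumptions on the mesh and the polynomial degrees.
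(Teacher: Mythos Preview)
Your argument for the first assertion, and the overall architecture of the second (bound $\sum_j S(v_j)$ via the trace inequality and \eqref{eq:j-l2-jump}; use jump orthogonality of the $V_e$'s to reduce $\sum_e S(v_e)$ to $S(v_E)$; then the triangle inequality with $\llb v_b\rrb=\llb u_h\rrb$), coincide with the paper's proof.

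The one place you diverge is the bound on $S(v_J)=S\bigl(\sum_j v_j\bigr)$, which you call ``the main obstacle'' and handle by an edge-by-edge case analysis. The paper does this in one line via Corollary~\ref{cor:quadrature-2}, and your worry that ``one cannot simply pass to $\sum_j S(v_j)$'' is unfounded. The point is to apply the quadrature equivalence not on a single (possibly nonconforming) mesh edge $e'$, where the distinct-node hypothesis may indeed fail, but on the full element boundary $\partial\k$: for each $\k$ the functions $\{v_j\}_{j\in\k}$ are nodal at distinct Gauss--Lobatto points of $\partial\k$, so the one-dimensional analogue of Corollary~\ref{cor:quadrature-2} gives
\[
   \Bigl\|\sum_{j\in\k} v_j\Bigr\|_{0,\partial\k}^2 \eq \sum_{j\in\k}\|v_j\|_{0,\partial\k}^2.
\]
Splitting $\|\llb v_J\rrb\|_{0,e'}$ into the two one-sided traces, summing over $e'\subset\partial\k$, and using the standing comparability $p_{e'}^2/h_{e'}\eq p_\k^2/h_\k$ then yields $S(v_J)\lesssim\sum_j S(v_j)$ directly, with no cardinality count and no enumeration of edge types. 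Your case analysis appears to be correct (the enumeration into cases (i) and (ii) checks out once one unwinds the definition of $V_J$), but it is not needed, and the ``delicate enumeration'' you flag is avoided entirely.
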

\begin{proof}
   The first assertion follows from the definition of the spaces $V_j$ and by Lemmas \ref{lem:regular-vertex}, \ref{lem:conf-edge}, and \ref{lem:no-hp}.

   We now prove the second assertion.
   By the trace inequality \eqref{eq:trace-ineq},
   \begin{equation} \label{eq:vj-tr-1}
      \sum_{e'\in\partial\k} \frac{p_{\k}^2}{h_{\k}} \sum_j \| v_j \|_{0,e'}^2
      \lesssim \sum_j \frac{p_\k^4}{h_\k^2} \| v_j \|_{0,\k}^2.
   \end{equation}
   Using \eqref{eq:j-l2-jump},
   \begin{equation} \label{eq:vj-tr-2}
      \sum_j \sum_\k \frac{p_\k^4}{h_\k^2} \| v_j \|_{0,\k}^2
      \lesssim \sum_{\k} \sum_{e'\in\partial\k} \frac{p_{\k}^2}{h_{\k}} \| \llb u_h \rrb \|_{0,e'}^2.
   \end{equation}
   Note that $\| v_j \|_{0,e'}^2 = \| \llb v_j \rrb \|_{0,e'}^2$, and so combining \eqref{eq:vj-tr-1} and \eqref{eq:vj-tr-2} gives
   \[
      \sum_{e'\in\partial\k} \frac{p_{\k}^2}{h_{\k}} \sum_j \| \llb v_j \rrb \|_{0,e'}^2
      \lesssim \sum_{\k} \sum_{e'\in\partial\k} \frac{p_{\k}^2}{h_{\k}} \| \llb u_h \rrb \|_{0,e'}^2.
   \]
   Let $v_J = \sum_j v_j$ and $v_E = \sum_e v_e$.
   Note that as a consequence of Corollary \ref{cor:quadrature-2}, we have
   \[
      \sum_{e'\in\Gamma} \frac{p_{e'}^2}{h_{e'}} \| \llb v_J \rrb \|_{0,e'}^2
      \lesssim \sum_{e'\in\Gamma} \frac{p_{e'}^2}{h_{e'}} \| \llb u_h \rrb \|_{0,e'}^2
   \]
   By the triangle inequality, writing $v_E = v_b - v_J$ and noting that $\llb v_b \rrb = \llb u_h \rrb$,
   \[
      \sum_{e'\in\Gamma} \frac{p_{e'}^2}{h_{e'}} \| \llb v_E \rrb \|_{0,e'}^2
      \lesssim \sum_{e'\in\Gamma} \frac{p_{e'}^2}{h_{e'}} \left( \| \llb v_b \rrb \|_{0,e'}^2 + \| \llb v_J \rrb \|_{0,e'}^2 \right)
      \lesssim \sum_{e'\in\Gamma} \frac{p_{e'}^2}{h_{e'}} \| \llb u_h \rrb \|_{0,e'}^2.
   \]
   Furthermore, by definition of the subspaces $V_e$, we have, for $e_1 \neq e_2$, $\llb v_{e_1} \rrb \cdot \llb v_{e_2} \rrb = 0$.
   Therefore,
   \[
      \sum_{e'} \frac{p_{e'}^2}{h_{e'}} \sum_e \| \llb v_e \rrb \|_{0,e'}^2 = \sum_{e'} \frac{p_{e'}^2}{h_{e'}} \| \llb v_E \rrb \|_{0,e'}^2,
   \]
   and the second statement follows.
\end{proof}

We also have the following lower bounds on $T_E^{-1}$ and $T_J^{-1}$.
\begin{lem} \label{lem:TE-TJ-bounds}
   \begin{equation} \label{eq:TE-inv-lower-bound}
      \A(T_E^{-1} v_E, v_E) \gtrsim \A(v_E, v_E) \quad\text{for all $v_E \in V_E$},
   \end{equation}
   and
   \begin{equation} \label{eq:TJ-inv-lower-bound}
      \A(T_J^{-1} v_J, v_J) \gtrsim \A(v_J, v_J) \quad\text{for all $v_J \in V_J$}.
   \end{equation}
\end{lem}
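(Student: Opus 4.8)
The plan is to combine the infimum identity \eqref{eq:Pinv-inf-identity} with the direct-sum structure of the decompositions $V_E = \bigoplus_e V_e$ and $V_J = \bigoplus_j V_j$ established above. Since $T_E = \sum_e P_e$ and $T_J = \sum_j P_j$ are built from \emph{exact} projections, Lemma \ref{lem:inf-identity} applied with ambient space $V_E$ (resp.\ $V_J$) and subspaces $\{V_e\}$ (resp.\ $\{V_j\}$) gives $\A(T_E^{-1} v_E, v_E) = \inf \sum_e \A(w_e, w_e)$, the infimum taken over all representations $v_E = \sum_e w_e$ with $w_e \in V_e$. Because the $V_e$ are pairwise disjoint, there is exactly one such representation, namely $v_E = \sum_e v_e$, so $\A(T_E^{-1} v_E, v_E) = \sum_e \A(v_e, v_e)$, and likewise $\A(T_J^{-1} v_J, v_J) = \sum_j \A(v_j, v_j)$. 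Both \eqref{eq:TE-inv-lower-bound} and \eqref{eq:TJ-inv-lower-bound} therefore reduce to bounding $\A$ of a sum by the sum of the $\A$'s, i.e.\ to bounds on $\lmax(T_E)$ and $\lmax(T_J)$.

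For $T_J$ I would argue separately for each $j$. Each $v_j$ is a multiple of a single Gauss--Lobatto basis function $\phi_{\k_j,i_j}$ whose node $\bm\xi_{\k_j,i_j}$ lies on $\partial\k_j$ (since $V_j \subset V_B$); hence $\phi_{\k_j,i_j}$ vanishes at every interior node of $\k_j$, so the inverse trace inequality \eqref{eq:inv-trace-ineq} applies, and $\llb v_j \rrb|_e = \pm v_j|_{\k_j}$ on any edge $e \subset \partial\k_j$ containing $\bm\xi_{\k_j,i_j}$ because $v_j$ is supported on the single element $\k_j$. Combining coercivity, the penalty contribution to $\|\cdot\|_\DG$, and \eqref{eq:inv-trace-ineq} then gives
\[
   \A(v_j, v_j) \gtrsim \| v_j \|_\DG^2 \geq \sigma_e \| \llb v_j \rrb \|_{0,e}^2 = \eta \frac{p_e^2}{h_e} \| v_j \|_{0,e}^2 \gtrsim \eta \frac{p_e^2}{h_e} \frac{p_{\k_j}^2}{h_{\k_j}} \| v_j \|_{0,\k_j}^2 \eq \eta \frac{p_{\k_j}^4}{h_{\k_j}^2} \| v_j \|_{0,\k_j}^2,
\]
using $p_e \eq p_{\k_j}$ and $h_e \eq h_{\k_j}$. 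Summing over $j$, grouping by element, and invoking Corollary \ref{cor:quadrature-2} (the nodes $\bm\xi_{\k_j,i_j}$ are distinct, so the $v_j$ supported on a fixed element $\k$ have disjoint nodal support) yields $\sum_j \A(v_j, v_j) \gtrsim \eta \sum_\k \frac{p_\k^4}{h_\k^2} \| v_J \|_{0,\k}^2 \gtrsim \A(v_J, v_J)$, the last step by the upper bound in Lemma \ref{lem:eigenvalue}. This establishes \eqref{eq:TJ-inv-lower-bound}.

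For $T_E$ the pointwise trick is unavailable, so I would use a finite-overlap (coloring) argument. Note that $\A(v_e, v_{e'}) = 0$ unless the supports of $v_e$ and $v_{e'}$ share a mesh element or are adjacent, so the task is to bound the number of $V_{e'} \neq V_e$ for which this can happen. By the construction of the edge spaces, any two basis functions $\phi_{\k,i}, \phi_{\k,i'} \in V_B \setminus V_J$ whose nodes lie on a common edge of $\k$ satisfy $\llb \phi_{\k,i} \rrb \cdot \llb \phi_{\k,i'} \rrb \not\equiv 0$ and hence lie in the same $V_e$; therefore each mesh element meets the supports of at most four distinct edge spaces, and — using the standing assumptions (quasi-uniformity, bounded mesh irregularity, bounded polynomial-degree ratio), under which each $V_e$ has support contained in a bounded number of elements — each $V_e$ interacts with at most a bounded number $N$ of others. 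Writing $\A(v_E, v_E) = \sum_{e,e'} \A(v_e, v_{e'})$, discarding the non-interacting pairs, and applying $\A(v_e, v_{e'}) \leq \A(v_e, v_e)^{1/2} \A(v_{e'}, v_{e'})^{1/2}$ on the rest gives $\A(v_E, v_E) \leq (1 + N) \sum_e \A(v_e, v_e) \lesssim \A(T_E^{-1} v_E, v_E)$, which is \eqref{eq:TE-inv-lower-bound}.

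The main obstacle is precisely this overlap count: rigorously justifying, under the stated hypotheses, that the support of each $V_e$ (and hence $N$) is bounded independently of $h$ and $p$. An alternative route for $T_E$ that avoids it is to use Lemma \ref{lem:eigval-2} together with the pointwise orthogonality $\llb v_e \rrb \cdot \llb v_{e'} \rrb = 0$ for $e \neq e'$ and Corollary \ref{cor:quadrature-2}, which reduces \eqref{eq:TE-inv-lower-bound} to the element-local estimate $\| \nabla w \|_{0,\k}^2 \gtrsim \frac{p_\k^4}{h_\k^2} \| w \|_{0,\k}^2$ for $w \in V_B$; proving that Poincar\'e-type bound on the boundary-layer subspace would then be the crux.
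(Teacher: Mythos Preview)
Your argument for \eqref{eq:TJ-inv-lower-bound} is essentially the paper's, just run in the reverse direction: the paper starts from $\A(v_J,v_J)$, applies Lemma~\ref{lem:eigenvalue} and Corollary~\ref{cor:quadrature-2} to pass to $\sum_j \eta\,p_\k^4 h_\k^{-2}\|v_j\|_{0,\k}^2$, then uses the inverse trace inequality and $\|v_j\|_{0,e}=\|\llb v_j\rrb\|_{0,e}$ to reach $\sum_j \|v_j\|_{\DG}^2 \lesssim \sum_j \A(v_j,v_j) = \A(T_J^{-1}v_J,v_J)$. Same ingredients, same conclusion.

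For \eqref{eq:TE-inv-lower-bound} you correctly identify the mechanism (finite overlap) and the key structural fact (all boundary basis functions of a single element whose nodes lie on one edge belong to the same $V_e$, hence each element meets at most four distinct $V_e$). But you then try to bound $\A(v_E,v_E)=\sum_{e,e'}\A(v_e,v_{e'})$ globally via Cauchy--Schwarz on $\A$-pairings, which forces you to bound how many $V_{e'}$ a fixed $V_e$ can interact with --- and that requires the support of each $V_e$ to be uniformly bounded, the very obstacle you flag. The paper sidesteps this entirely: it replaces $\A$ by the equivalent DG norm, then treats the two pieces separately. For the gradient piece it works \emph{element by element}: on each $\k$ at most four of the $v_e$ are nonzero, so $\|\nabla v_E\|_{0,\k}^2 \lesssim \sum_e \|\nabla v_e\|_{0,\k}^2$ by Cauchy--Schwarz in $\R^4$, and summing over $\k$ gives the bound without ever asking how many elements a single $V_e$ touches. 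For the jump piece, the construction of the $V_e$ gives exact pointwise orthogonality $\llb v_{e_1}\rrb\cdot\llb v_{e_2}\rrb=0$ for $e_1\neq e_2$, so $\sum_{e'}\|\sigma^{1/2}\llb v_E\rrb\|_{0,e'}^2 = \sum_{e'}\sum_e\|\sigma^{1/2}\llb v_e\rrb\|_{0,e'}^2$ with equality. Combining yields $\A(v_E,v_E)\lesssim \sum_e\|v_e\|_{\DG}^2\lesssim \sum_e\A(v_e,v_e)=\A(T_E^{-1}v_E,v_E)$. So your obstacle is not a real one; switching from the bilinear-form expansion to the DG-norm split removes it, and your alternative route via a boundary-layer Poincar\'e inequality is unnecessary.
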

\begin{proof}
   To prove \eqref{eq:TE-inv-lower-bound}, we use the finite overlap property of the spaces $V_e$.
   For each subspace $V_e$, define the subdomain $\Omega_e \subseteq \Omega$ as the union of all elements $\k\in\T$ such that $v_e|_{\k} \not\equiv 0$ for some $v_e \in V_e$.
   Each element $\k$ is contained in a number of subdomains $\Omega_e$ bounded by the number of edges of $\k$.

   We then have
   \[
      \A(v_E, v_E)
      \eq \| v_E \|_{\DG}^2
      = \sum_{\k\in\T} \| \nabla v_E \|_{0,\k}^2 + \sum_{e'\in\Gamma} \| \sigma^{1/2} \llb v_E \rrb \|_{0,e'}^2.
   \]
   Note that $\| \nabla v_e \|_{0,\k}^2 = 0$ if $\k \notin \Omega_e$, and so by the finite overlap property,
   \[
      \| \nabla v_E \|_{0,\k}^2
      = \Big\| \sum_e \nabla v_e \Big\|_{0,\k}^2
      \lesssim \sum_e \| \nabla v_e \|_{0,\k}^2,
   \]
   and so
   \[
      \sum_{\k\in\T} \| \nabla v_E \|_{0,\k}^2 \lesssim \sum_{\k\in\T} \sum_e \| \nabla v_e \|_{0,\k}^2.
   \]
   Additionally, since $\llb v_{e_1} \rrb \cdot \llb v_{e_2} \rrb = 0$ for $e_1 \neq e_2$, we have
   \[
      \sum_{e'\in\Gamma} \| \sigma^{1/2} \llb v_E \rrb \|_{0,e'}^2
      = \sum_{e'\in\Gamma} \sum_e \| \sigma^{1/2} \llb v_e \rrb \|_{0,e'}^2.
   \]
   We conclude that
   \[
      \A(v_E, v_E)
      \lesssim \sum_e \left(
         \sum_{\k\in\T} \| \nabla v_e \|_{0,\k}^2
         + \sum_{e'\in\Gamma} \|\sigma^{1/2} \llb v_e \rrb \|_{0,e'}^2
      \right)
      = \sum_e \| v_e \|_{\DG}^2
      \lesssim \sum_e \A(v_e, v_e),
   \]
   and so, by Lemma \ref{lem:inf-identity},
   \[
      \A(T_E^{-1} v_E, v_E) = \sum_e \A(v_e, v_e) \gtrsim \A(v_E, v_E).
   \]

   To prove \eqref{eq:TJ-inv-lower-bound}, we use an argument from \cite{Antonietti2016}.
   Write $v_J = \sum_j v_j$.
   From the eigenvalue estimate (Lemma \ref{lem:eigenvalue}), we have
   \[
      \A(v_J, v_J)
      \lesssim \sum_{\k\in\T} \eta \frac{p_\k^4}{h_\k^2} \| v_J \|_{0,\k}^2
      \eq \sum_{\k\in\T} \eta \frac{p_\k^4}{h_\k^2} \sum_j \| v_j \|_{0,\k}^2.
   \]
   Using the inverse trace inequality and noting that $\| v_j \|_{0,e}^2 = \| \llb v_j \rrb \|_{0,e}^2$, we see
   \[
      \sum_{\k\in\T} \eta \frac{p_\k^4}{h_\k^2} \sum_j \| v_j \|_{0,\k}^2
      \lesssim \sum_{\k\in\T} \eta \frac{p_\k^2}{h_\k} \sum_j \| v_j \|_{0,\partial\k}^2
      = \sum_{\k\in\T} \eta \frac{p_\k^2}{h_\k} \sum_j \| \llb v_j \rrb \|_{0,\partial\k}^2
      \lesssim \sum_j \| v_j \|_{\DG}^2.
   \]
   Then, by Lemma \ref{lem:inf-identity},
   \[
      \A(T_J^{-1} v_J, v_J)
      = \sum_j \A(v_{j}, v_{j})
      \gtrsim \sum_j \| v_{j} \|_{\DG}^2
      \gtrsim \A(v_J, v_J),
   \]
   proving \eqref{eq:TJ-inv-lower-bound}.
\end{proof}
We are now ready to show that $T_B$ satisfies the following bounds.
\begin{lem} \label{lem:TB-inv-bounds}
   \begin{equation} \label{eq:TB-lower-bound}
      \A(T_B^{-1} v_b, v_b) \gtrsim \A(v_b, v_b) \quad\text{for all $v_b \in V_B$},
   \end{equation}
   and
   \begin{equation} \label{eq:TB-upper-bound}
      \A(T_B^{-1} (v_h - \Qh v_h), v_h - \Qh v_h) \lesssim \A(v_h - \Qh v_h, v_h - \Qh v_h) \quad\text{for all $v_h \in V_h$}.
   \end{equation}
\end{lem}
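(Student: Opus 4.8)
The plan is to establish the two bounds \eqref{eq:TB-lower-bound} and \eqref{eq:TB-upper-bound} by combining the sub-bounds on $T_E$ and $T_J$ from Lemma \ref{lem:TE-TJ-bounds} with the stability of the two-level decomposition $V_B = V_E + V_J$ proved in Lemma \ref{lem:j-e-stable-decomp}, all inside the infimum identity \eqref{eq:Tinv-inf-identity} of Lemma \ref{lem:inf-identity}. For the lower bound \eqref{eq:TB-lower-bound}, I would apply \eqref{eq:Tinv-inf-identity} to $T_B = T_E + T_J$: for any decomposition $v_b = v_E + v_J$ with $v_E\in V_E$, $v_J\in V_J$, we have $\A(T_B^{-1}v_b,v_b) \le \A(T_E^{-1}v_E,v_E) + \A(T_J^{-1}v_J,v_J)$, but I actually need the reverse inequality. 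So instead I would use \eqref{eq:Tinv-inf-identity} as an exact identity: $\A(T_B^{-1}v_b,v_b) = \inf \sum_e \A(v_e,v_e) + \sum_j \A(v_j,v_j)$ over all splittings $v_b = \sum_e v_e + \sum_j v_j$. Each such splitting is bounded below using the finite-overlap / jump-orthogonality arguments already assembled inside the proof of Lemma \ref{lem:TE-TJ-bounds}: writing $v_E = \sum_e v_e$, $v_J = \sum_j v_j$, one has $\A(v_b,v_b) \eq \|v_b\|_{\DG}^2 \lesssim \|v_E\|_{\DG}^2 + \|v_J\|_{\DG}^2 \lesssim \sum_e \A(v_e,v_e) + \sum_j \A(v_j,v_j)$, where the gradient terms are controlled by finite overlap of the $\Omega_e$ and the jump terms by $\llb v_{e_1}\rrb\cdot\llb v_{e_2}\rrb = 0$ for $e_1\neq e_2$ together with Corollary \ref{cor:quadrature-2} for the point-Jacobi part. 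Taking the infimum over splittings yields \eqref{eq:TB-lower-bound}.

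For the upper bound \eqref{eq:TB-upper-bound}, set $v_b = v_h - \Qh v_h$ and let $v_b = \sum_e v_e + \sum_j v_j$ be the \emph{specific} decomposition furnished by the subspace construction (the unique representation in the direct sum \eqref{eq:VB-subspaces}). By the infimum identity, $\A(T_B^{-1}v_b,v_b) \le \A(T_E^{-1}v_E,v_E) + \A(T_J^{-1}v_J,v_J)$, and since $T_E$, $T_J$ are themselves additive Schwarz operators on their respective decompositions, $\A(T_E^{-1}v_E,v_E) = \sum_e \A(v_e,v_e)$ (exact projections $P_e$) and $\A(T_J^{-1}v_J,v_J) = \sum_j \A(v_j,v_j)$. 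Now I bound these two sums. Using Lemma \ref{lem:eigval-2} on each one-dimensional piece, $\sum_j \A(v_j,v_j) \lesssim \sum_\k \frac{p_\k^4}{h_\k^2}\sum_j\|v_j\|_{0,\k}^2 + \sum_{e'}\eta\frac{p_{e'}^2}{h_{e'}}\sum_j\|\llb v_j\rrb\|_{0,e'}^2$, and similarly for the $v_e$ terms, $\sum_e \A(v_e,v_e) \lesssim \sum_e\|v_e\|_{\DG}^2 \lesssim \sum_\k\frac{p_\k^4}{h_\k^2}\sum_e\|v_e\|_{0,\k}^2 + \sum_{e'}\eta\frac{p_{e'}^2}{h_{e'}}\sum_e\|\llb v_e\rrb\|_{0,e'}^2$. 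The jump contributions are handled directly by the second assertion of Lemma \ref{lem:j-e-stable-decomp}, giving $\lesssim \sum_{e'}\eta\frac{p_{e'}^2}{h_{e'}}\|\llb u_h\rrb\|_{0,e'}^2$. The point-Jacobi volume term is handled by the first assertion \eqref{eq:j-l2-jump} of Lemma \ref{lem:j-e-stable-decomp}, again giving $\lesssim \sum_\k\sum_{e'\in\partial\k}\frac{p_\k^2}{h_\k}\|\llb u_h\rrb\|_{0,e'}^2$. Assembling, and recalling $\llb v_b\rrb = \llb u_h\rrb$, the right-hand side is $\lesssim \|\nabla_h v_b\|_0^2 + \sum_{e'}\eta\frac{p_{e'}^2}{h_{e'}}\|\llb u_h\rrb\|_{0,e'}^2 \eq \|v_b\|_{\DG}^2 \eq \A(v_b,v_b) = \A(v_h - \Qh v_h, v_h - \Qh v_h)$.

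The one genuinely delicate point is controlling the \emph{edge-space volume term} $\sum_\k\frac{p_\k^4}{h_\k^2}\sum_e\|v_e\|_{0,\k}^2$ — this is not directly covered by Lemma \ref{lem:j-e-stable-decomp}, which only bounds the \emph{jumps} of the $v_e$. To close this, I expect to invoke the inverse trace inequality \eqref{eq:inv-trace-ineq}: each $v_e$ lies in $V_B$, hence vanishes at all interior Gauss–Lobatto nodes, so $\|v_e\|_{0,\k}^2 \lesssim \frac{h_\k}{p_\k^2}\|v_e\|_{0,\partial\k}^2$, and since the boundary values of $v_e$ are supported on edges where $v_e$ contributes to the jump, $\frac{p_\k^4}{h_\k^2}\|v_e\|_{0,\k}^2 \lesssim \frac{p_\k^2}{h_\k}\|v_e\|_{0,\partial\k}^2 \lesssim \sum_{e'\in\partial\k}\frac{p_{e'}^2}{h_{e'}}\|\llb v_e\rrb\|_{0,e'}^2$, using that $v_e^{\pm}|_{e'} = \pm\frac12\llb v_e\rrb|_{e'}$ up to the finite-overlap bookkeeping (or, where $v_e$ has support on both sides of $e'$, that each trace is comparable to the jump). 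This reduces the edge volume term back to edge jump terms already controlled by Lemma \ref{lem:j-e-stable-decomp}, at which point the proof concludes as above. An alternative, cleaner route for this step is to observe $\sum_e\A(v_e,v_e) \lesssim \sum_e\|v_e\|_{\DG}^2$ and then bound $\|v_e\|_{\DG}^2$ \emph{as a whole} via Lemma \ref{lem:eigval-2} applied with the conforming comparison function $w_h = 0$ is unavailable here, so the inverse-trace reduction is the safer path.
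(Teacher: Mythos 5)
Your lower-bound argument is correct and matches the paper's: apply the infimum identity \eqref{eq:Tinv-inf-identity} to the full fine decomposition, invoke \eqref{eq:TE-inv-lower-bound} and \eqref{eq:TJ-inv-lower-bound} from Lemma~\ref{lem:TE-TJ-bounds} on the (near-)minimizing splitting, and finish with the triangle inequality. Whether you cite Lemma~\ref{lem:TE-TJ-bounds} as a black box (as the paper does) or inline its finite-overlap and jump-orthogonality arguments (as you do) is purely stylistic.

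The upper bound contains a genuine gap, and you have correctly located it: the edge-space volume term $\sum_\k\frac{p_\k^4}{h_\k^2}\sum_e\|v_e\|_{0,\k}^2$. But the inverse-trace reduction you propose there does not work. You need an estimate of the form
\begin{equation*}
   \frac{p_\k^2}{h_\k}\|v_e\|_{0,\partial\k}^2 \lesssim \sum_{e'\in\partial\k}\frac{p_{e'}^2}{h_{e'}}\|\llb v_e\rrb\|_{0,e'}^2,
\end{equation*}
and your justification (``each trace is comparable to the jump'') is false in general: once the generating process \eqref{eq:VB-subspaces} has propagated $V_e$ to both sides of an interior edge $e'$, the specific $v_e$ arising from $v_h-\Qh v_h$ can have nearly matching traces $v_e^\pm|_{e'}$, so $\llb v_e\rrb|_{e'}$ is small while $\|v_e\|_{0,\partial\k}$ is not. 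Worse, a jump-only bound on this term is structurally impossible. The decomposition $v_b = \sum_e v_e + \sum_j v_j$ is a direct sum, so $\A(T_B^{-1}v_b,v_b) = \sum_e\A(v_e,v_e)+\sum_j\A(v_j,v_j) \gtrsim \sum_e\|v_e\|_0^2+\sum_j\|v_j\|_0^2 \eq \|v_b\|_0^2$ by Lemma~\ref{lem:eigenvalue} and Corollary~\ref{cor:quadrature-2}; and $\|v_b\|_0^2 = \|v_h-\Qh v_h\|_0^2$ is precisely the quantity that, in the configuration of Figure~\ref{fig:mesh-example}, \emph{cannot} be controlled by the weighted jump terms alone (the constant degrades like $n^4$). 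So if your chain of inequalities closed, you would have proved exactly the bound the paper's counterexample refutes.

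What the paper does instead is the opposite of localizing to jumps: by Corollary~\ref{cor:quadrature-2} (disjoint nodal supports of the $v_e$, and of $v_E$ versus $v_J$) one has $\sum_e\sum_\k\frac{p_\k^4}{h_\k^2}\|v_e\|_{0,\k}^2 \eq \sum_\k\frac{p_\k^4}{h_\k^2}\|v_E\|_{0,\k}^2 \lesssim \sum_\k\frac{p_\k^4}{h_\k^2}\|v_b\|_{0,\k}^2$, and then, since $\Qh$ is a projection so that $(I-\Qh)v_b = v_b$, Lemma~\ref{lem:l2-jump-h1} converts $\sum_\k\frac{p_\k^4}{h_\k^2}\|v_b\|_{0,\k}^2$ into $\|\nabla_h v_b\|_0^2 + \sum_{e'}\frac{p_{e'}^2}{h_{e'}}\|\llb v_b\rrb\|_{0,e'}^2 \eq \|v_b\|_{\DG}^2$. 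The extra $\|\nabla_h v_b\|_0^2$ term is unavoidable and is exactly what the modified estimate \eqref{eq:Qh-jump-grad} supplies. Your write-up never invokes Lemma~\ref{lem:l2-jump-h1} in the upper-bound argument; that is the missing idea, and it is the paper's central new ingredient.
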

\begin{proof}
   We begin by proving the lower bound \eqref{eq:TB-lower-bound}.
    By Lemma \eqref{lem:inf-identity}, Lemma \eqref{lem:TE-TJ-bounds}, and the triangle inequality, we see, for $v_b = v_E + v_J$,
   \begin{align*}
      \A(T_B^{-1} v_b, v_b)
      &\gtrsim \A(T_E^{-1} v_E, v_E) + \A(T_J^{-1} v_J, v_J) \\
      &\gtrsim \A(v_E, v_E) + \A(v_J, v_J) \\
      &\geq \A(v_b, v_b),
   \end{align*}
   proving \eqref{eq:TB-lower-bound}.

   Now we turn to the upper bound \eqref{eq:TB-upper-bound}.
   This is equivalent to showing that $\sum_e V_e + \sum_{j} V_{j}$ is a stable decomposition of $\mathrm{Ran}(I - \Qh)$.
   Let $v_h$ be given, and let $v_b = v_h - \Qh v_h$.
   By Lemma \ref{lem:eigval-2},
   \begin{align} \label{eq:ATB-inv-upper-bound}
      \A(T_B^{-1} v_b, v_b)
      &\lesssim \sum_e \left( \sum_{\k} \frac{p_\k^4}{h_\k^2} \| v_e \|_{0,\k}^2
      + \sum_{e'} \eta \frac{p_{e'}^2}{h_{e'}} \| \llb v_e \rrb \|_{0,e'}^2 \right)
      + \sum_{j} \sum_\k \eta \frac{p_\k^4}{h_\k^2} \| v_{j} \|_{0,\k}^2.
   \end{align}
   By Lemma \ref{lem:j-e-stable-decomp}, we have
   \begin{equation} \label{eq:jump-bounds}
      \sum_e \sum_{e'} \eta \frac{p_{e'}^2}{h_{e'}} \| \llb v_e \rrb \|_{0,e'}^2
      + \sum_{j} \sum_\k \eta \frac{p_\k^4}{h_\k^2} \| v_{j} \|_{0,\k}^2
      \lesssim \sum_{e'} \eta \frac{p_{e'}^2}{h_{e'}} \| \llb v_b \rrb \|_{0,e'}^2
   \end{equation}
   and by Corollary \eqref{cor:quadrature-2},
   \begin{equation} \label{eq:l2-bounds}
      \sum_e \sum_{\k} \frac{p_\k^4}{h_\k^2} \| v_e \|_{0,\k}^2
      \eq \sum_{\k} \frac{p_\k^4}{h_\k^2} \Big\| \sum_e v_e \Big\|_{0,\k}^2
      \lesssim \sum_{\k} \frac{p_\k^4}{h_\k^2} \| v_b \|_{0,\k}^2.
   \end{equation}
   Inserting \eqref{eq:jump-bounds} and \eqref{eq:l2-bounds} into \eqref{eq:ATB-inv-upper-bound}, noting that $(I-\Qh)v_b = v_b$ and applying Lemma \ref{lem:l2-jump-h1},
   \begin{align*}
      \A(T_B^{-1} v_b, v_b)
      &\lesssim \sum_\k \frac{p_\k^4}{h_\k^2} \| v_b \|_{0,\k}^2 + \sum_{e'} \eta \frac{p_{e'}^2}{h_{e'}} \| \llb v_b \rrb \|_{0,e'}^2 \\
      &\lesssim \sum_k \| \nabla v_b \|_{0,\k}^2 + \sum_{e'} \eta \frac{p_{e'}^2}{h_{e'}} \| \llb v_b \rrb \|_{0,e'}^2 \\
      &= \| v_b \|_{\DG}^2 \lesssim \A(v_b, v_b),
   \end{align*}
   proving the claim.
\end{proof}

Finally, by means of these results, we may prove that the resulting preconditioned system $T = T_B + T_C$ is uniformly well conditioned with respect to the mesh size, polynomial degree, and penalty parameter $\eta$.
\begin{thm} \label{thm:conditioning}
   The preconditioned system $T = T_B + T_C$ is uniformly well-conditioned, i.e.
   \[
      \kappa(T) = {\lmax(T)}/{\lmin(T)} \eq 1,
   \]
   independent of mesh size, polynomial degree, and penalty parameter $\eta$.
\end{thm}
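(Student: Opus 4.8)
The plan is to bound the maximum and minimum eigenvalues of $T = T_B + T_C$ separately using the abstract identity \eqref{eq:Tinv-inf-identity} from Lemma \ref{lem:inf-identity}, together with the building blocks already established: the stable decomposition $V_h = V_B + V_C$ (Lemma \ref{lem:stable-decomp}), the two-sided bounds on $T_B$ (Lemma \ref{lem:TB-inv-bounds}), and the assumed spectral equivalence $\A(T_C^{-1} u_c, u_c) \eq \A(u_c, u_c)$ for the conforming preconditioner. Since $T = \sum_i T_i$ over the full collection of subspaces $\{V_C\} \cup \{V_e\} \cup \{V_j\}$, the identity gives, for any $v_h \in V_h$,
\[
   \A(T^{-1} v_h, v_h) = \inf_{v_h = v_c + \sum_e v_e + \sum_j v_j}
      \left( \A(T_C^{-1} v_c, v_c) + \sum_e \A(P_e^{-1} v_e, v_e) + \sum_j \A(P_j^{-1} v_j, v_j) \right),
\]
and the goal is to show this quantity is $\eq \A(v_h, v_h)$.

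For the \emph{upper bound} on $\kappa(T)$ — equivalently a lower bound $\lmin(T) \gtrsim 1$, i.e.\ an upper bound on $\A(T^{-1}v_h, v_h)$ — I would exhibit the specific splitting $v_c = \Qh v_h$ and $v_b = v_h - \Qh v_h = \sum_e v_e + \sum_j v_j$ (the canonical decomposition of $v_b \in V_B$ used throughout). Then the conforming term is controlled by $\A(T_C^{-1} v_c, v_c) \eq \A(v_c, v_c) \lesssim \A(v_h, v_h)$ using Lemma \ref{lem:stable-decomp}, while the sum of the boundary-subspace terms equals $\A(T_B^{-1} v_b, v_b)$ by Lemma \ref{lem:inf-identity} applied within $V_B$, which is $\lesssim \A(v_b, v_b) \lesssim \A(v_h, v_h)$ by the upper bound \eqref{eq:TB-upper-bound} of Lemma \ref{lem:TB-inv-bounds} (note that $v_b = (I - \Qh) v_h$ is exactly the form required there) combined again with Lemma \ref{lem:stable-decomp}. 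This yields $\A(T^{-1} v_h, v_h) \lesssim \A(v_h, v_h)$, hence $\lmin(T) \gtrsim 1$.

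For the \emph{lower bound} on $\kappa(T)$ — an upper bound $\lmax(T) \lesssim 1$, i.e.\ a lower bound on $\A(T^{-1}v_h, v_h)$ over all admissible splittings — I would argue that for \emph{any} decomposition $v_h = v_c + \sum_e v_e + \sum_j v_j$, writing $v_b = \sum_e v_e + \sum_j v_j = v_h - v_c$, the boundary terms satisfy $\sum_e \A(P_e^{-1} v_e, v_e) + \sum_j \A(P_j^{-1} v_j, v_j) \geq \A(T_B^{-1} v_b, v_b) \gtrsim \A(v_b, v_b)$ by the infimum characterization and the lower bound \eqref{eq:TB-lower-bound}, while $\A(T_C^{-1} v_c, v_c) \gtrsim \A(v_c, v_c)$. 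Summing and using $\A(v_c, v_c) + \A(v_b, v_b) \gtrsim \A(v_c + v_b, v_c + v_b) = \A(v_h, v_h)$ (which follows from $\A(a,b) \leq \frac12(\A(a,a) + \A(b,b))$, so $\A(v_h,v_h) \leq 2(\A(v_c,v_c)+\A(v_b,v_b))$), and taking the infimum, gives $\A(T^{-1} v_h, v_h) \gtrsim \A(v_h, v_h)$, hence $\lmax(T) \lesssim 1$. Combining the two bounds gives $\kappa(T) \eq 1$.

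The argument is essentially an assembly of already-proven pieces, so there is no single hard technical obstacle remaining; the main point requiring care is the correct bookkeeping in applying Lemma \ref{lem:inf-identity} in its two roles — once to $T$ over the full subspace list, and once to $T_B$ over the sublist $\{V_e\} \cup \{V_j\}$ — and ensuring that the splitting used for the lower bound on $\lmin$ is the \emph{canonical} one (so that Lemma \ref{lem:TB-inv-bounds} applies, since \eqref{eq:TB-upper-bound} is only stated for arguments of the form $v_h - \Qh v_h$), whereas the bound on $\lmax$ must hold for an \emph{arbitrary} splitting (where only the subspace-independent lower bound \eqref{eq:TB-lower-bound} is available). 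Keeping these two directions straight is the crux of writing the proof cleanly.
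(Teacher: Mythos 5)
Your proof is correct and follows essentially the same route as the paper: the same canonical splitting $u_c = \Qh u_h$, $u_b = u_h - \Qh u_h$ for bounding $\A(T^{-1}u_h,u_h)$ from above, the same appeal to Lemmas \ref{lem:stable-decomp} and \ref{lem:TB-inv-bounds}, and the same triangle-inequality argument for the lower bound. The only difference is presentational --- you apply Lemma \ref{lem:inf-identity} once over the full subspace list $\{V_C\}\cup\{V_e\}\cup\{V_j\}$ and then regroup, whereas the paper applies it at the two-term level $T_B + T_C$ and leaves the nested application to $T_B$ implicit.
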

\begin{proof}
   By Lemma \ref{lem:inf-identity}, we have
   \begin{align*}
      \A(T^{-1} u_h, u_h)
      &= \inf_{u_b + u_c = u_h} \left( \A(T_B^{-1} u_b, u_b) + \A(T_C^{-1} u_c, u_c) \right)\\
      &\leq \A(T_B^{-1} (I-\Qh) u_h, (I-\Qh)u_h) + \A(T_C^{-1} \Qh u_h, \Qh u_h) \\
      &\lesssim \A( u_h - \Qh u_h, u_h - \Qh u_h) + \A(\Qh u_h, \Qh u_h) \\
      &\lesssim \A(u_h, u_h),
   \end{align*}
   using Lemmas \ref{lem:stable-decomp} and \ref{lem:TB-inv-bounds}.
   This establishes the upper bound.

   For the lower bound, by Lemma \ref{lem:TB-inv-bounds} and the triangle inequality, we have
   \begin{align*}
      \A(T^{-1} u_h, u_h)
      &= \inf_{u_b + u_c = u_h} \left( \A(T_B^{-1} u_b, u_b) + \A(T_C^{-1} u_c, u_c) \right)\\
      &\gtrsim \inf_{u_b + u_c = u_h} \left( \A(u_b, u_b) + \A(u_c, u_c) \right)\\
      &\geq \A(u_h, u_h). \qedhere
   \end{align*}
\end{proof}

\subsection{Matrix-free preconditioners for the conforming problem}
\label{sec:lor}

In the above analysis, the approximate projection $T_C$ onto the conforming subspace $V_C = V_h \cap H^1(\Omega)$ corresponds to a preconditioned system for the standard $H^1$-conforming finite element problem.
As long as this preconditioned system is well conditioned, i.e. it satisfies
\[
   \A_C( T_C^{-1} u_c, u_c ) \eq \A_C(u_c, u_c) \quad\text{for all $u_c \in V_C$},
\]
where $A_C(\dotdot)$ is the standard $H^1$ bilinear form $\A_C(u,v) = \int_\Omega \nabla u\cdot\nabla v\,d\bm x$, then the result of Theorem \ref{thm:conditioning} holds and the proposed preconditioner is efficient.
% The goal of this section is to develop \textit{matrix-free} preconditioners for the conforming problem, which do not require the process of assembling the system matrix, which can be costly at high polynomial degrees.

However, constructing a good preconditioner for the general $hp$-conforming problem is challenging, because assembling the linear system matrix can be costly at high polynomial degrees.
Therefore, in this section we propose \textit{matrix-free} preconditioners for the conforming problem which are much more efficient at higher orders.
While such preconditioners have been previously considered in the case of $h$-refinement with a fixed $p$, the extension to the general $hp$-refinement case is new and is one of the contributions of this paper.

Our approach is based on a low-order refined methodology \cite{Orszag1980,Canuto2010}.
It is well known that a low-order ($p=1$) finite element discretization on a Gauss--Lobatto refined mesh is spectrally equivalent to the high-order conforming discretization \cite{Canuto1994}.
This equivalence is often also referred to as the finite element method--spectral element method (FEM-SEM) equivalence \cite{Canuto2006}.
Low-order refined preconditioners with parallel subspace corrections have been used in the context of discontinuous Galerkin discretizations on conforming meshes in \cite{Pazner2019a}.
However, in the case of nonconforming meshes or variable polynomial degrees, the low-order refined meshes do not match at coarse element interfaces, and the refined spaces corresponding to different polynomial degrees are not nested.
An illustration of one such mesh is shown in Figure \ref{fig:lor-example}.
For these reasons it is not immediately clear how to construct an equivalent low-order refined discretization in either the case of $p$-refinement or nonconforming meshes.

\begin{figure}
   \centering
   \includegraphics{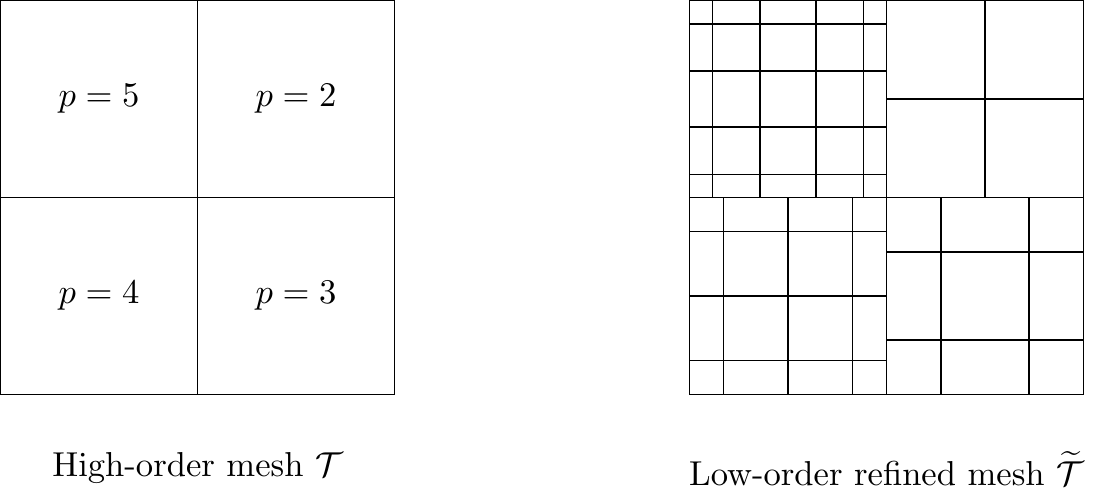}
   \caption{Example of high-order mesh $\T$ and corresponding low-order refined mesh $\widetilde{\T}$, illustrating the resulting non-matching, non-nested interfaces.}
   \label{fig:lor-example}
\end{figure}

In this work, we make use of the \textit{variational restriction} perspective for nonconforming adaptive mesh refinement \cite{Cerveny2019}.
Let $P : V_C \to V_h$ denote the natural injection, which we call the \textit{conforming prolongation} operator.
Let $\Ahat$ denote the matrix corresponding to the bilinear form $\widehat{\A}(u,v) = \int_\Omega \nabla_h u \cdot \nabla_h v \, d\bm x$.
Note that $\Ahat : V_h \to V_h$ is a block diagonal matrix, since it is defined on the ``broken'' DG space $V_h$.
Then, the matrix $A_C : V_C \to V_C$ corresponding to the conforming bilinear form $\A_C$ is given by the variational restriction
\[
   A_C = P^\tr \Ahat P.
\]

Let $\Ahat_\k$ denote the diagonal block of $\Ahat$ corresponding to element $\k$.
Let $\ktilde$ denote the low-order refined mesh of element $\k$, defined as the image under the element mapping $T_\k$ of the Cartesian mesh whose vertices are the $p_\k + 1$ tensor-product Gauss--Lobatto nodes.
Then, let $\Atilde_\k$ denote the matrix corresponding to the bilinear ($p=1$) finite element stiffness matrix on the low-order refined element mesh $\ktilde$.
The spectral equivalence of low-order refined discretizations (cf.\ \cite{Canuto1994}) implies that $\Ahat_\k$ is spectrally equivalent to $\Atilde_\k$, i.e.
\begin{equation} \label{eq:local-spectral-equivalence}
   u^\tr \Ahat_\k u \eq u^\tr \Atilde_\k u \qquad\text{for all $u$},
\end{equation}
independent of the polynomial degree $p_\k$.
Let $\Atilde$ be the block diagonal matrix whose diagonal blocks correspond to the low-order refined elemental matrices $\Atilde_\k$.
Then, define $\Atilde_C$ by
\[
   \Atilde_C = P^\tr \Atilde P.
\]
This definition gives us the following simple result.
\begin{prop} \label{prop:lor-conditioning}
   The low-order refined discretization $\Atilde_C$ is spectrally equivalent to the high-order conforming discretization $A_C$.
\end{prop}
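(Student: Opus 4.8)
The plan is to derive the global spectral equivalence $u^\tr \Atilde_C u \eq u^\tr A_C u$ directly from the local equivalence \eqref{eq:local-spectral-equivalence} together with the variational restriction identities $A_C = P^\tr \Ahat P$ and $\Atilde_C = P^\tr \Atilde P$. The key observation is that both $\Ahat$ and $\Atilde$ are block diagonal over the elements $\k \in \T$, so for any $w \in V_h$ we can write $w^\tr \Ahat w = \sum_{\k\in\T} w_\k^\tr \Ahat_\k w_\k$ and $w^\tr \Atilde w = \sum_{\k\in\T} w_\k^\tr \Atilde_\k w_\k$, where $w_\k$ is the restriction of $w$ to the degrees of freedom of element $\k$.

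First I would apply the local equivalence \eqref{eq:local-spectral-equivalence} elementwise: there are constants $c, C > 0$, independent of $p_\k$ (and hence of the global polynomial degree distribution), such that $c\, w_\k^\tr \Ahat_\k w_\k \leq w_\k^\tr \Atilde_\k w_\k \leq C\, w_\k^\tr \Ahat_\k w_\k$ for every $\k$ and every $w_\k$. Summing over all elements gives $c\, w^\tr \Ahat w \leq w^\tr \Atilde w \leq C\, w^\tr \Ahat w$ for all $w \in V_h$, i.e.\ $\Ahat \eq \Atilde$ on the full broken space. Then I would specialize to $w = Pu$ for $u \in V_C$: using $A_C = P^\tr\Ahat P$ and $\Atilde_C = P^\tr\Atilde P$, we get
\[
   u^\tr \Atilde_C u = (Pu)^\tr \Atilde (Pu) \eq (Pu)^\tr \Ahat (Pu) = u^\tr A_C u,
\]
which is exactly the claimed spectral equivalence, with constants inherited from the local FEM--SEM equivalence and therefore independent of $h$, the polynomial degrees, and (trivially, since neither form involves it) the penalty parameter $\eta$.

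There is no real obstacle here: the proposition is essentially a bookkeeping consequence of the block-diagonal structure and the known local equivalence, which the paper has already invoked. The one point worth stating carefully is that the constants in \eqref{eq:local-spectral-equivalence} are uniform across the elements despite the varying $p_\k$ — this is precisely the content of the classical low-order refined / FEM--SEM equivalence results cited (\cite{Canuto1994,Canuto2006}), and it is what makes the elementwise summation produce a global equivalence with a single pair of constants. I would also note in passing that $P$ need not be injective-with-closed-range in any quantitative sense for this argument: the equivalence of quadratic forms transfers under any linear map, so restricting the already-established equivalence $\Ahat \eq \Atilde$ to the range of $P$ immediately yields $A_C \eq \Atilde_C$.
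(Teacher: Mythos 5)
Your proof is correct and follows essentially the same route as the paper: apply the local FEM--SEM equivalence \eqref{eq:local-spectral-equivalence} blockwise, sum over elements to get $\Ahat \eq \Atilde$ on $V_h$, then restrict to the range of $P$ via $A_C = P^\tr \Ahat P$ and $\Atilde_C = P^\tr \Atilde P$. The paper compresses the intermediate summation step and simply writes the Rayleigh-quotient identity $u^\tr A_C u / u^\tr \Atilde_C u = v^\tr \Ahat v / v^\tr \Atilde v \eq 1$ with $v = Pu$, but your more explicit bookkeeping is the same argument.
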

\begin{proof}
   For any $u \in V_C$ we have, by \eqref{eq:local-spectral-equivalence} and setting $v = P u$,
   \[
      \frac{u^\tr A_C u}{u^\tr \Atilde_C u}
      = \frac{u^\tr P^\tr \Ahat P u}{u^\tr P^\tr \Atilde_C P u}
      = \frac{v^\tr \Ahat v}{v^\tr \Atilde_C v}
      \eq 1. \qedhere
   \]
\end{proof}

The advantage of the discretization $\Atilde_C$ is that the elemental matrices can be assembled in constant time per degree of freedom, as opposed to the high-order discretization $A_C$, for which naive implementations require $\mathcal{O}(p^{2d})$ operations per degree of freedom (optimized implementations using sum factorizations can reduce this cost to $\mathcal{O}(p^{d+1})$ operations per degree of freedom) \cite{Orszag1980,Melenk2001}.
Then, any uniform matrix-based preconditioner for $\Atilde_C$ can be used to precondition $\Atilde$.
In particular, algebraic multigrid methods such as BoomerAMG \cite{Henson2002}, which require an assembled matrix, can be applied easily.
In this work, the approximate projection $T_C$ is given by approximating $A_C^{-1}$ by one V-cycle of the BoomerAMG preconditioner applied to $\Atilde_C$.

\begin{rem}
   The low-order refined discretization $\Atilde_C$ is of interest in and of itself.
   Let $\widetilde{\T} = \{ \ktilde : \k \in \T \}$ denote the (non-matching) low-order defined mesh.
   In this context, the elements $\k\in\T$ will be referred to as macroelements, which will be refined to obtain the mesh $\widetilde{\T}$.
   Any high-order function $v_h \in V_h$ can be identified with a low-order refined function $\tilde{v}_h$, which is given on each element $\k$ by the low-order ($p=1$) interpolant at the Gauss--Lobatto nodes.
   Let $\widetilde{I} : v_h \mapsto \tilde{v}_h$ denote this identification, and let $\widetilde{V}_h = \widetilde{I}(V_h)$ denote the image of $\widetilde{I}$, which consists of piecewise $p=1$ functions defined on the refined mesh $\widetilde{\T}$ that are continuous within each macroelement $\k$, and potentially discontinuous across macroelements.
   We define the nonconforming finite element space $\widetilde{V}_C$ by
   \[
      \widetilde{V}_C = \left\{ \tilde{v}_h \in \widetilde{V}_h : \widetilde{I}^{-1}(\tilde{v}_h) \in H^1(\Omega)  \right\},
   \]
   that is, $\widetilde{V}_C$ consists of all low-order refined functions $\tilde{v}_h$, whose corresponding high-order function $v_h = \widetilde{I}^{-1}(\tilde{v}_h)$ is conforming.
   Then, the low-order refined $\Atilde_C$ can be seen to correspond to the bilinear form $\widetilde{\A} : \widetilde{V}_C \times \widetilde{V}_C \to \R$, $\widetilde{A}(\tilde{u}_h, \tilde{v}_h) = \int_\Omega \nabla_h \tilde{u_h} \cdot \nabla_h \tilde{v}_h \, d\bm x$.

   It is straightforward to see (using the norm-equivalence of low-order refined functions), that the operator $\widetilde{A}$ is bounded and coercive, with respect to the broken $H^1$ norm, denoted $\| \cdot \|_{1,h}$.
   Using techniques similar to those of mortar element methods \cite{Bernardi2005} together with a discrete Poincar\'e inequality, it is possible to bound the approximation and consistency errors of the discretization to obtain the error estimate
   \[
      \| \tilde{u}_h - u \|_{1,h} \lesssim h^{1/2} \| u \|_1.
   \]
   By the nonconforming Aubin-Nitsche lemma (cf.\ \cite{Braess2007}), we can obtain the $L^2$ estimate
   \[
      \| \tilde{u}_h - u \|_0 \lesssim h \| u \|_2.
   \]
   These estimates indicate that the discretization $\Atilde_C$ is of limited utility in terms of accuracy of the discrete solution, however, because of Proposition \ref{prop:lor-conditioning}, it will be quite useful for preconditioning the high-order problem.
\end{rem}

\section{Numerical results}
\label{sec:results}

\subsection{Implementation and algorithmic details}

The algorithms described in this paper have been implemented in the framework for the MFEM finite element library \cite{Anderson2020, MFEMWebsite}.
The main components of the solver are:
\begin{enumerate}
   \item Efficient matrix-free evaluation of the high-order discontinuous Galerkin bilinear form $\A(\dotdot)$.
   \item Assembly of the diagonal of the discontinuous Galerkin matrix (corresponding to the subspaces $V_j$), and assembly of the diagonal blocks corresponding to the subspaces $V_e$.
   \item Assembly of the low-order refined conforming stiffness matrix $\Atilde_C$.
   \item Assembly of the conforming prolongation operator $P$.
   \item Application of a uniform preconditioner (e.g.\ BoomerAMG) approximating $\Atilde_C^{-1}$.
\end{enumerate}

We now consider the number of operations required to perform these operations.
In particular, we are interested in the scaling with respect to polynomial degree.
For operations which are local to a given element or edge, the polynomial degree $p$ will be used to refer to $p_\k$ or $p_e$, respectively.
Using matrix-free sum-factorized operator evaluation, the evaluation of the high-order discontinuous Galerkin bilinear form $\A(\dotdot)$ can be performed element-by-element, requiring $\mathcal{O}(p^{d+1}) = \mathcal{O}(p^3)$ operations and constant memory per degree of freedom \cite{Pazner2018e,Orszag1980,Melenk2001}.
Additionally, the diagonal of the matrix can be assembled in the same complexity.
Since the spaces $V_e$ are composed of degrees of freedom lying on edges, the size of $V_e$ scales like $\mathcal{O}(n_e p_e)$, where $n_e$ is the number of edges that are included in $V_e$ through the generating process.
As a result, the assembly and inversion of these local blocks can be computed in $\mathcal{O}(p^3)$ operations, which is the same scaling as operator evaluation.
The low-order refined conforming stiffness matrix has $\mathcal{O}(1)$ nonzeros per row, and therefore can be assembled in constant time per degree of freedom (i.e.\ $\mathcal{O}(p^2)$ operations).
The number of nonzeros in the conforming prolongation operator scales as $\mathcal{O}(p^2)$, and each nonzero entry can be computed in constant time.
Finally, the construction and application of the BoomerAMG preconditioner also requires constant operations per degree of freedom.

In the numerical examples below, we will study the performance of the preconditioners developed in this paper applied to several problems involving nonconforming mesh refinement and variable polynomial degree.
We consider the preconditioned system $T = T_B + T_C$, where $T_B$ is defined by \eqref{eq:TB} and denote the corresponding preconditioner as $B = B_B + B_C$, such that $T = BA$.
In addition to $B$, we also consider a simplified preconditioner $\widetilde{B} = J_B + B_C$, where $J_B$ corresponds to a simple Jacobi method applied to the subspace $V_B$ (that is, the edge spaces $V_e$ are not used in the simplified preconditioner).
This preconditioner is known to be uniform for the case of conforming meshes with uniform polynomial degree \cite{Antonietti2016,Pazner2019a}, however the failure of estimates of the form \eqref{eq:Qh-jump} to hold in the $hp$-refinement case suggests that this preconditioner will not perform well in situations similar to that shown in Figure \ref{fig:mesh-example}.
Finally, we will compare our results to one V-cycle of the BoomerAMG algebraic multigrid method with Gauss-Seidel smoothing applied to the DG system.
Conjugate gradient iteration counts are reported using a relative tolerance of $10^{-8}$.
In the examples below, the penalty parameter $\eta$ is fixed to be $\eta = 100$, with the exception of Section \ref{sec:penalty}, in which we vary $\eta$ to study the impact of the penalty parameter on the preconditioner performance.

\subsection{Adaptive refinement}

We consider two examples of adaptively refined meshes and spaces.
The first problem is the standard L-shaped domain test \cite{Szabo1991,Solin2008},
\[
\begin{aligned}
   \Delta u &= 0 &&\quad\text{in $\Omega$,}\\
   u &= g_D &&\quad\text{on $\partial\Omega$,}
\end{aligned}
\]
where the Dirichlet boundary conditions are given by
\[
   g_D(\bm x) = r(\bm x)^{2/3} \sin\left( \frac{2\theta(\bm x)}{3} + \frac{\pi}{3} \right),
\]
where $r$ and $\theta$ denote polar coordinates in $\mathbb{R}^2$.
The exact solution to this problem has a singular gradient, triggering refinements near the corner of the domain.
The mesh, polynomial degrees, and solution after 14 adaptive refinement steps are shown in Figure \ref{fig:l-shape}.

\begin{figure}
   \centering
   \settoheight{\imageheight}{%
   \includegraphics[width=0.4\linewidth]{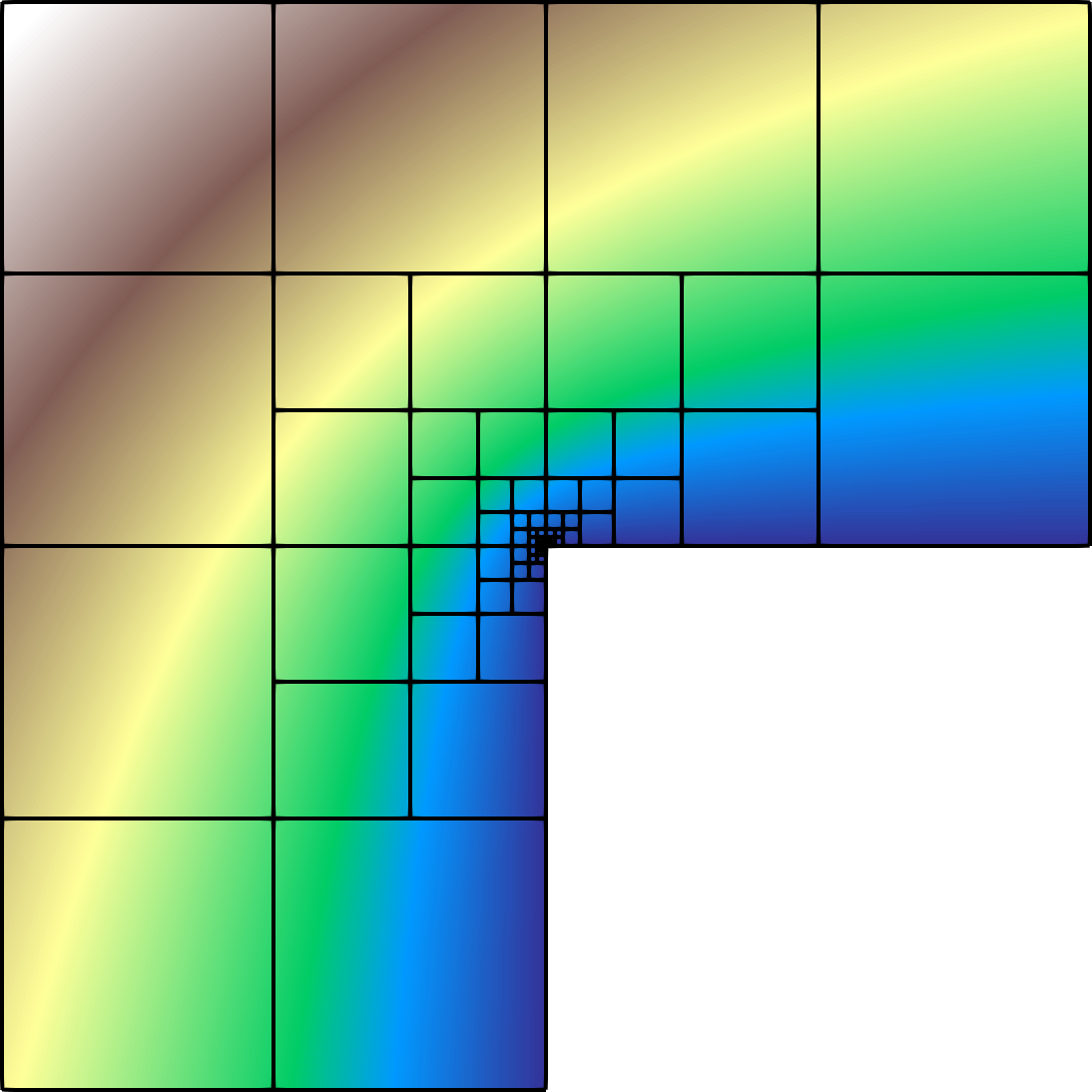}}
   \includegraphics[height=\imageheight]{fig/lshape/lshape} \hspace*{\fill}
   \includegraphics[height=\imageheight]{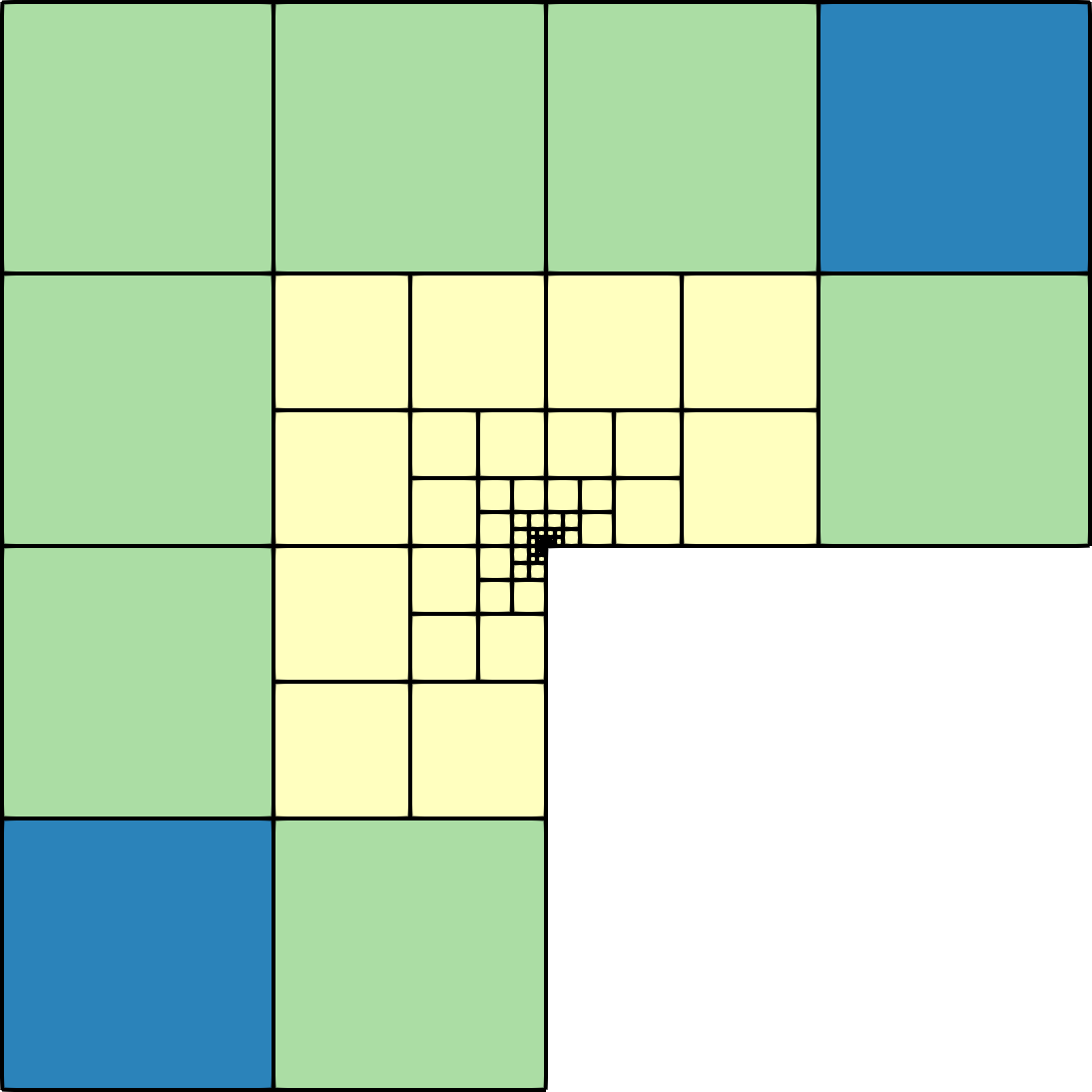}
   \includegraphics[height=\imageheight]{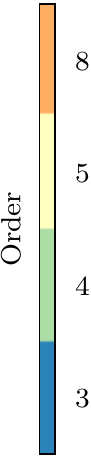}
   \caption{Solution and polynomial degrees for the L-shaped domain problem after 14 adaptive refinement steps.}
   \label{fig:l-shape}
\end{figure}

We adaptively refine the mesh and polynomial degrees 20 times, beginning with a coarse mesh with 12 elements.
The final mesh is 1-irregular, and consists of 138 elements, with a total of 5{,}207 degrees of freedom.
The conjugate gradient iteration counts for this problem are shown in Figure \ref{fig:amr-iters} and Table \ref{tab:lshape}.
We report the iteration counts for the subspace correction preconditioner $B = B_B + B_C$, where $T_B$ is defined by \eqref{eq:TB}, and $T_C$ is given by one V-cycle of BoomerAMG applied to the low-order refined discretization of the conforming problem.
The corresponding low-order refined mesh is illustrated in Figure \ref{fig:lor}.
Additionally, we show the iteration counts for the simplified preconditioner $\widetilde{B} = J_B + B_C$, where $J_B$ is a point Jacobi method applied to the subspace $V_B$.
This simplified preconditioner is expected to perform well on cases without problematic $hp$-interfaces of the type illustrated in Figure \ref{fig:mesh-example}.
Finally, we also consider one V-cycle of BoomerAMG with Gauss-Seidel smoothing applied to the DG problem.
We notice that for the L-shaped domain test case, because of the choice of $h$- and $p$-refinements, the first 18 refinements do not introduce problematic $hp$-interfaces, and both the subspace correction preconditioner $T$ and the simplified preconditioner $\widetilde{T}$ result in good performance that is essentially independent of refinement level.
The last two AMR steps introduce several problematic $hp$-interfaces, which result in degraded convergence for $\widetilde{T}$.
After only a couple of AMR steps, the BoomerAMG preconditioner applied to this problem results in large iteration counts.

\begin{table}
   \centering
   \caption{
      Convergence results and subspace sizes for the L-shaped domain test problem.
      $\# V_e$ indicates the number of nontrivial subspaces $V_e$ in the space decomposition \eqref{eq:VB-subspaces}.
      $\max\dim(V_e)$ indicates the dimension of the largest such subspace.
   }
   \label{tab:lshape}

   \vspace{\floatsep}

   \begin{tabular}{ccccc|cc}
      \toprule
      \makecell{AMR\\Step} & \# DOFs & \makecell{Iters.\\$B_B + B_C$} & \makecell{Iters.\\$J_B + B_C$} & \makecell{Iters.\\BoomerAMG} & \# $V_e$ & $\max \dim(V_e)$ \\
      \midrule
      0 & 63 & 24 & 24 & 25 & 0 & ---\\
      1 & 84 & 25 & 25 & 28 & 0 & ---\\
      2 & 121 & 27 & 27 & 36 & 0 & ---\\
      3 & 174 & 30 & 30 & 60 & 0 & ---\\
      4 & 223 & 30 & 30 & 72 & 0 & ---\\
      5 & 268 & 31 & 31 & 69 & 0 & ---\\
      6 & 319 & 32 & 32 & 87 & 0 & ---\\
      7 & 662 & 34 & 33 & 184 & 6 & 12\\
      8 & 1{,}014 & 32 & 32 & 207 & 6 & 12\\
      9 & 1{,}359 & 33 & 33 & 218 & 6 & 12\\
      10 & 1{,}683 & 33 & 33 & 226 & 6 & 12\\
      11 & 2{,}025 & 33 & 33 & 227 & 6 & 13\\
      12 & 2{,}367 & 33 & 33 & 234 & 6 & 13\\
      13 & 2{,}709 & 33 & 33 & 223 & 6 & 13\\
      14 & 3{,}042 & 32 & 32 & 223 & 6 & 13\\
      15 & 3{,}384 & 32 & 32 & 223 & 6 & 13\\
      16 & 3{,}752 & 35 & 35 & 209 & 2 & 13\\
      17 & 4{,}098 & 33 & 33 & 193 & 0 & ---\\
      18 & 4{,}422 & 34 & 34 & 194 & 0 & ---\\
      19 & 4{,}805 & 33 & 57 & 191 & 2 & 15\\
      20 & 5{,}207 & 35 & 71 & 198 & 8 & 25\\
      \bottomrule
   \end{tabular}
\end{table}

\begin{figure}
   \centering
   \includegraphics{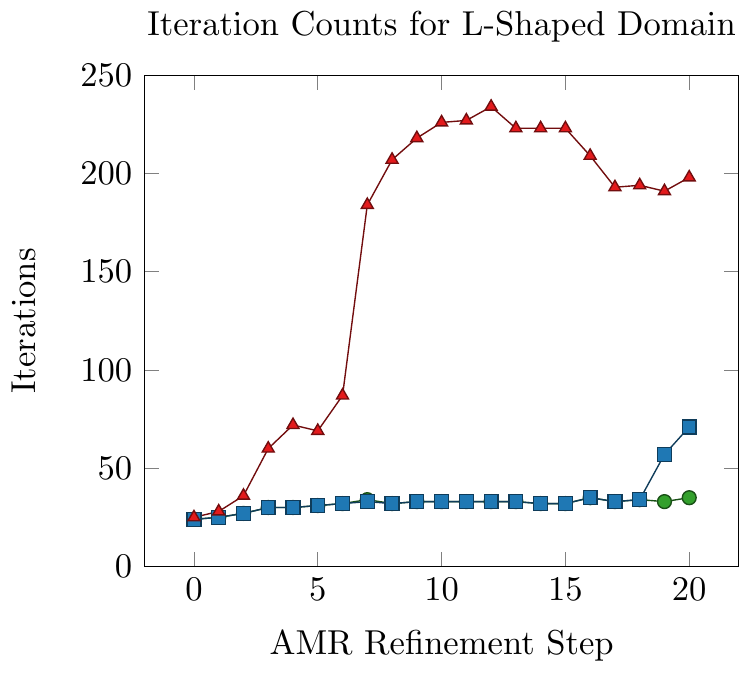}%
   \includegraphics{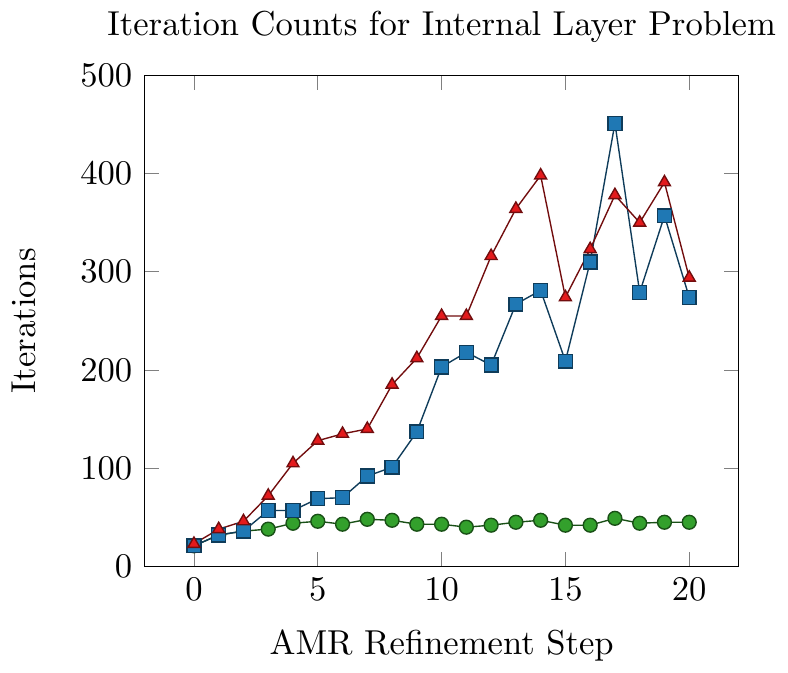}

   \vspace{\floatsep}
   \includegraphics{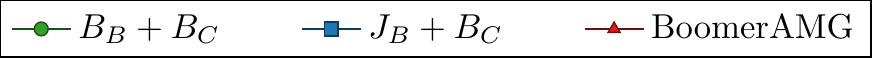}
   \caption{Conjugate gradient iteration counts for the L-shaped domain (left) and internal layer problem (right) adaptively refined problems.
   Comparison of subspace correction preconditioner $B = B_B + B_C$, simplified preconditioner $\widetilde{B} = J_B + T_C$, and BoomerAMG.}
   \label{fig:amr-iters}
\end{figure}

The second problem we consider is a problem with an internal layer \cite{Demkowicz2002,Solin2008}.
We solve the problem
\[
\begin{aligned}
   \Delta u &= f &&\quad\text{in $\Omega$,}\\
   u &= g_D &&\quad\text{on $\partial\Omega$,}
\end{aligned}
\]
where $f$ and $g_D$ are chosen to give the exact solution
\[
   u(\bm x) = \mathrm{atan}\left( 200 (r(\bm x) - 0.7) \right),
\]
where $r$ here denotes the distance from the point $(-0.05, -0.05)$.
This problem is characterized by a steep gradient near the circle centered at $(-0.05, -0.05)$ of radius $0.7$.
The results for this problem are shown in Figure \ref{fig:amr-iters} and Table \ref{tab:layer}.
In contrast to the previous test case, the adaptive refinement procedure in this case results in a large number of difficult $hp$-interfaces, as indicated by the large number of nontrivial $V_e$ subspaces.
As a consequence, the simplified preconditioner $\widetilde{T} = J_B = T_C$ does not perform well for this problem.
On the other hand, the subspace correction preconditioner results in iteration counts that remain bounded independent of the refinement level.
We note that the number of nontrivial subspaces $V_e$ increases with refinement level, thus increasing the cost of the preconditioner.
However, the majority of these subspaces are quite small, and they can be processed independently and in parallel.
For example, after 20 AMR steps, there are 121 nontrivial $V_e$ subspaces, of which the majority have dimension less than 10, and only 6 of which have dimension greater than 20.

\begin{figure}
   \centering
   \settoheight{\imageheight}{%
   \includegraphics[width=0.4\linewidth]{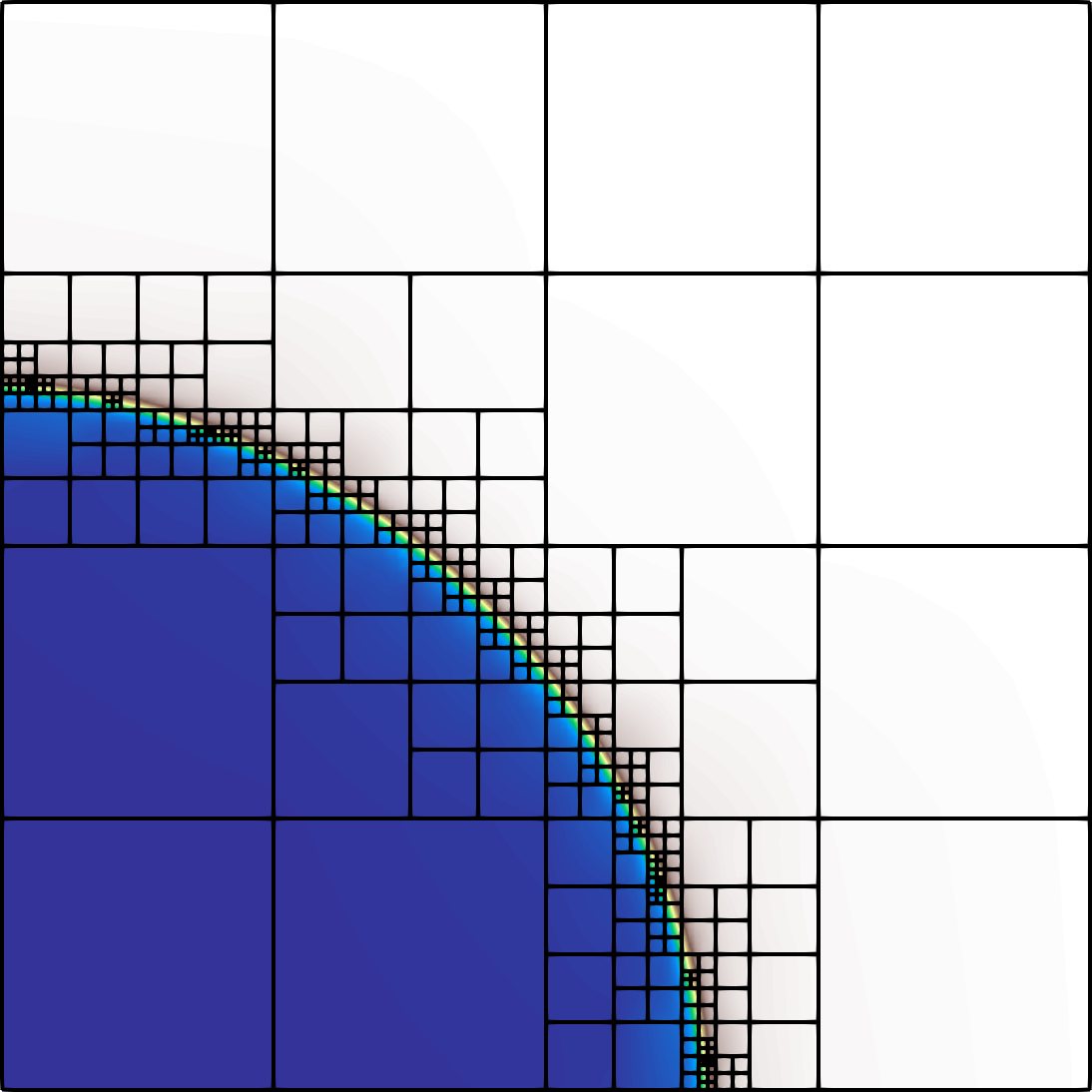}}
   \includegraphics[height=\imageheight]{fig/layer/layer} \hspace*{\fill}
   \includegraphics[height=\imageheight]{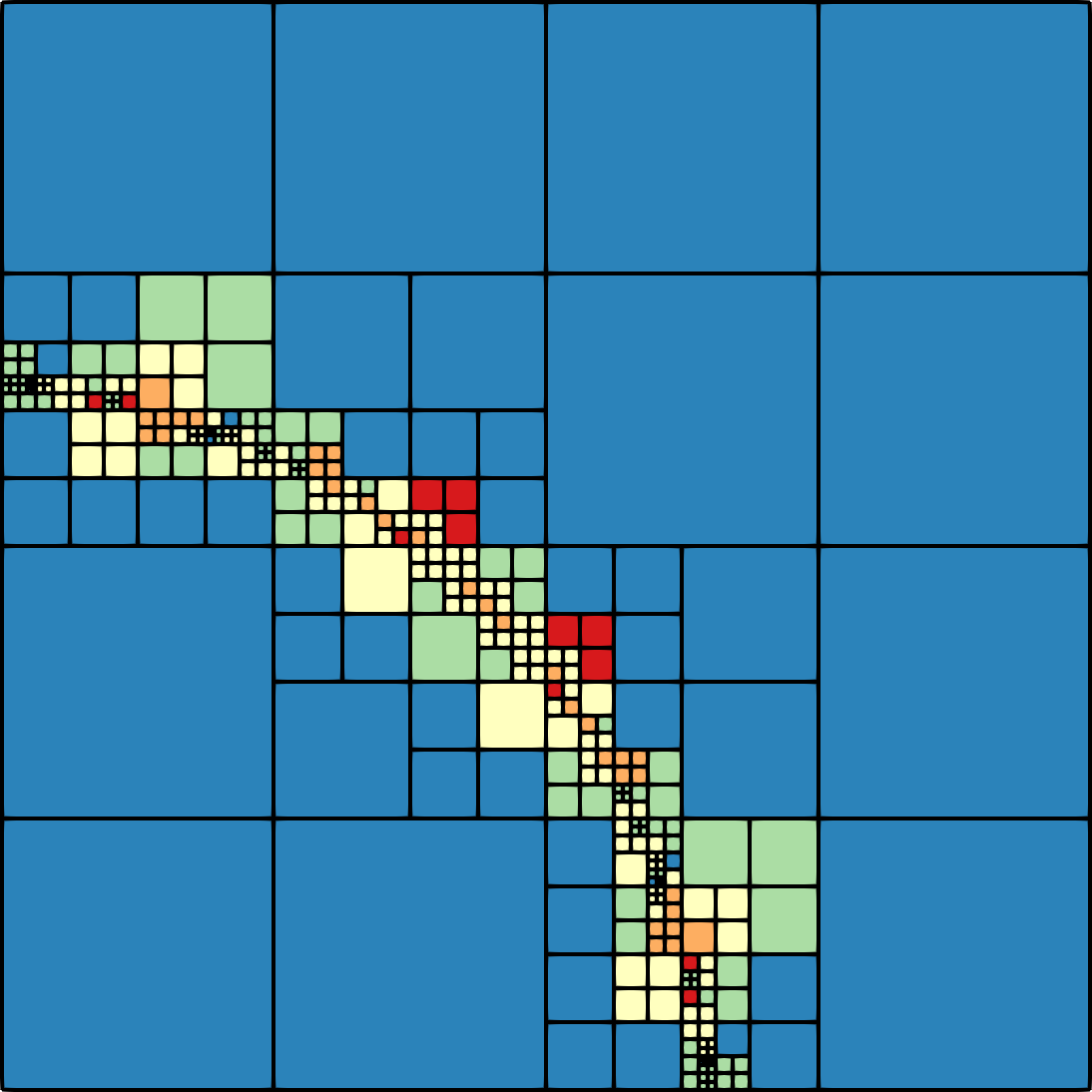}
   \includegraphics[height=\imageheight]{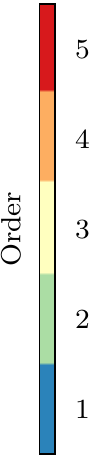}
   \caption{Solution and polynomial degrees for the internal layer problem after 20 adaptive refinement steps.}
   \label{fig:layer}
\end{figure}

\begin{table}
   \centering
   \caption{
      Convergence results and subspace sizes for the internal layer test problem.
      $\# V_e$ indicates the number of nontrivial subspaces $V_e$ in the space decomposition \eqref{eq:VB-subspaces}.
      $\max\dim(V_e)$ indicates the dimension of the largest such subspace.
   }
   \label{tab:layer}

   \vspace{\floatsep}

   \begin{tabular}{ccccc|cc}
      \toprule
      \makecell{AMR\\Step} & \makecell{\# DOFs} & \makecell{Iters.\\($B_B + B_C$)} & \makecell{Iters.\\($J_B + B_C$)} & \makecell{Iters.\\(BoomerAMG)} & \# $V_e$ & $\max \dim(V_e)$ \\
      \midrule
      0 & 64 & 21 & 21 & 23 & 0 & ---\\
      1 & 124 & 32 & 32 & 38 & 0 & ---\\
      2 & 196 & 36 & 36 & 46 & 0 & ---\\
      3 & 279 & 38 & 57 & 72 & 13 & 8\\
      4 & 354 & 44 & 57 & 105 & 13 & 12\\
      5 & 448 & 46 & 69 & 128 & 22 & 12\\
      6 & 502 & 43 & 70 & 135 & 22 & 12\\
      7 & 806 & 48 & 92 & 140 & 36 & 16\\
      8 & 958 & 47 & 101 & 185 & 39 & 20\\
      9 & 1{,}229 & 43 & 137 & 212 & 42 & 22\\
      10 & 1{,}602 & 43 & 203 & 255 & 53 & 24\\
      11 & 1{,}846 & 40 & 218 & 255 & 58 & 22\\
      12 & 2{,}086 & 42 & 205 & 316 & 63 & 22\\
      13 & 2{,}365 & 45 & 267 & 364 & 72 & 28\\
      14 & 2{,}421 & 47 & 281 & 398 & 72 & 31\\
      15 & 2{,}547 & 42 & 209 & 274 & 76 & 24\\
      16 & 2{,}892 & 42 & 310 & 323 & 84 & 27\\
      17 & 3{,}256 & 49 & 451 & 378 & 93 & 27\\
      18 & 3{,}805 & 44 & 279 & 350 & 111 & 27\\
      19 & 4{,}253 & 45 & 357 & 391 & 121 & 33\\
      20 & 4{,}401 & 45 & 274 & 294 & 121 & 34\\
      \bottomrule
   \end{tabular}
\end{table}

\begin{figure}
   \centering
   \hspace{0.05\linewidth}
   \includegraphics[width=0.4\linewidth]{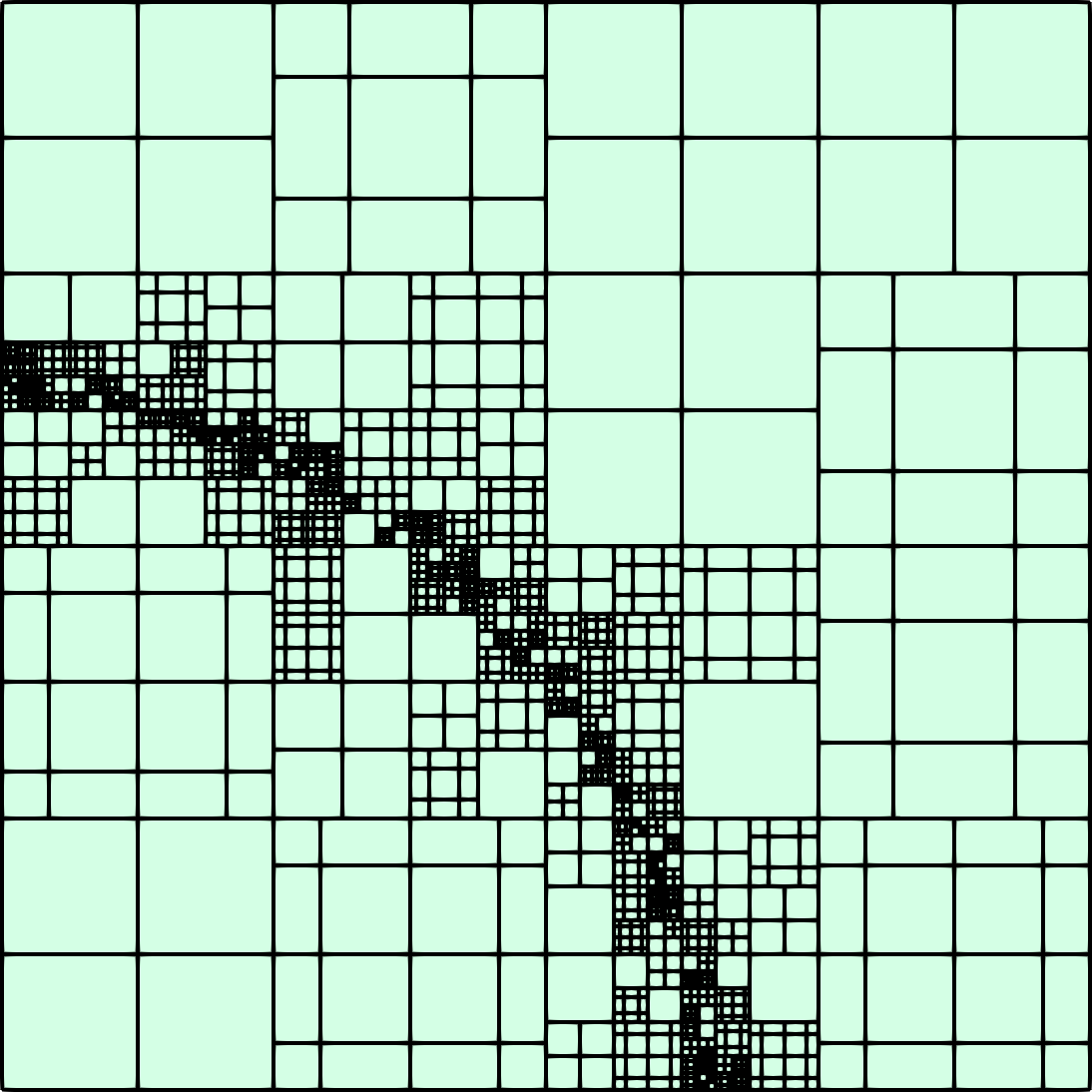}
   \hspace{\fill}
   \includegraphics[width=0.4\linewidth]{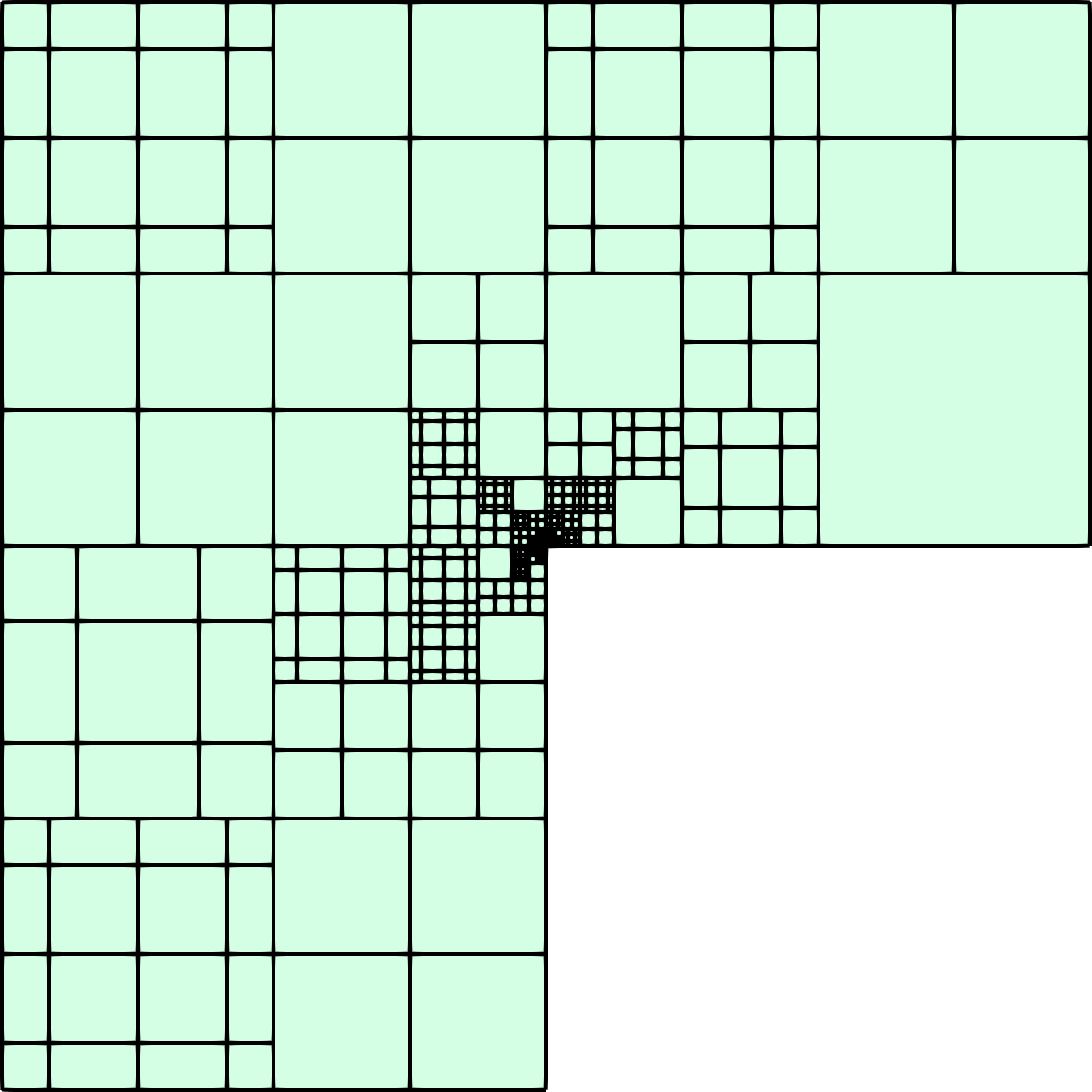}
   \hspace{0.05\linewidth}
   \caption{Low-order refined meshes for the L-shaped domain and internal layer problems.}
   \label{fig:lor}
\end{figure}

\subsection{Random refinement}
\label{sec:random-refinement}

To test the robustness of the new preconditioner, we now consider a sequence of nonconforming refinements made randomly.
Starting with an initial mesh, each element is marked for refinement with probability 0.5.
It is possible to limit the degree of irregularity of the final mesh (i.e. to ensure that an $\ell$-irregular mesh is obtained for given $\ell$) by propagating certain refinements.
We consider both the case of 1-irregular meshes and meshes with no limit on the degree of irregularity.
After the final mesh is obtained through this random refinement process, polynomial degrees are randomly assigned to each element.
We solve the problem
\[
   \nabla \cdot (a \nabla u) = f,
\]
where $a(\bm x)$ is a piecewise constant diffusion coefficient, which takes values of 1 and 20 according to a numbering of the elements the initial coarse mesh.
The coefficient and an example of a randomly refined mesh are shown in Figure \ref{fig:random}.

We study the convergence of the preconditioner for this problem using a combination of random and uniform refinements, and considering both 1-irregular meshes, and arbitrary $\ell$-irregular meshes.
The results are presented in Table \ref{tab:random}.
To begin, we refine the mesh once randomly, and then twice uniformly.
We note that the conjugate gradient iterations remain roughly constant with each uniform refinement.
Furthermore, the maximum dimension of the edge subspaces $V_e$ does not increase with uniform refinement.
We also consider increasing levels of random refinement.
We note that with increased random refinements (and increased irregularity of the mesh), we observe a slight degradation in the quality of the preconditioner.
Additionally, the dimension of the spaces $V_e$ is seen to grow rapidly with the irregularity of the mesh.
However, if we require that the mesh be 1-irregular, then the preconditioner performance and dimension of the spaces $V_e$ remains constant when performing random refinements.

\begin{figure}
   \centering
   \settoheight{\imageheight}{%
   \includegraphics[width=0.35\linewidth]{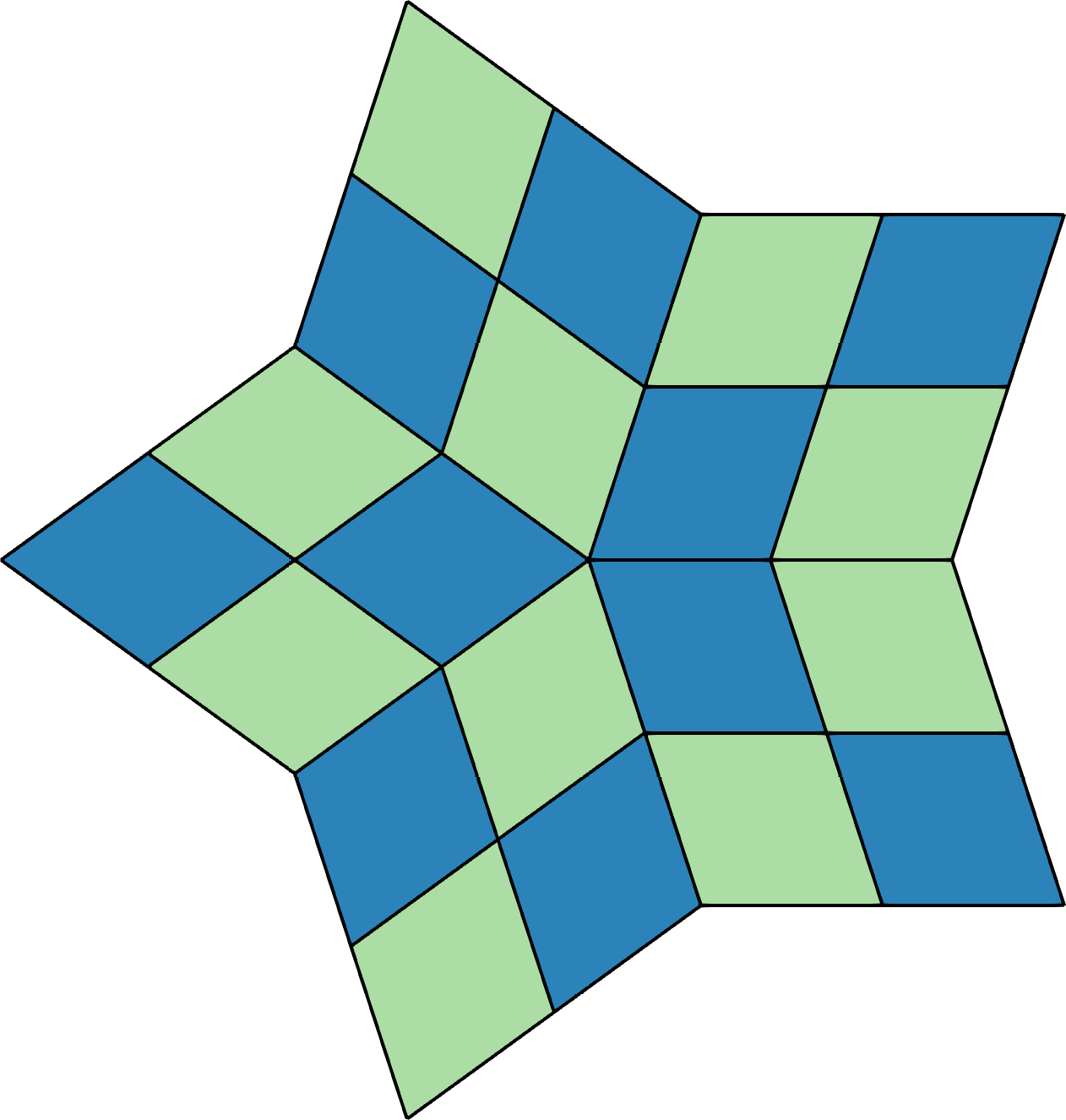}}
   \raisebox{-0.5\height}{\includegraphics[height=\imageheight]{fig/random/coeff}}
   \hspace{12pt}
   \raisebox{-0.5\height}{\includegraphics[height=0.85\imageheight]{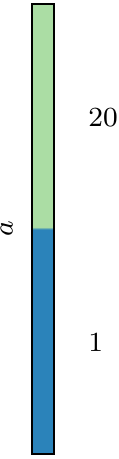}}
   \raisebox{-0.5\height}{\includegraphics[height=\imageheight]{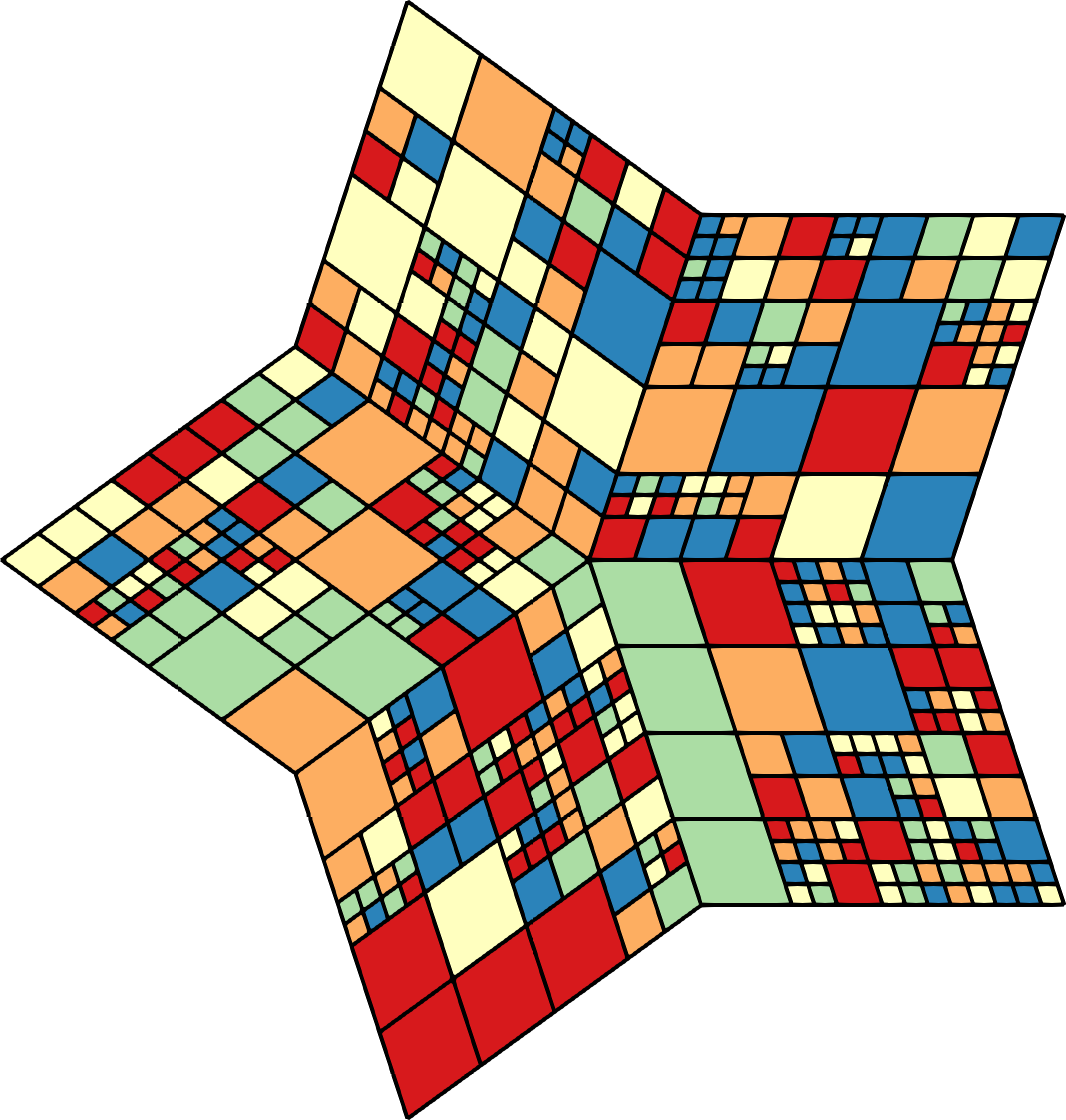}}
   \hspace{12pt}
   \raisebox{-0.5\height}{\includegraphics[height=0.85\imageheight]{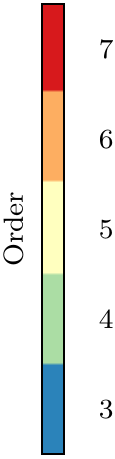}}

   \caption{Left: values of piecewise constant diffusion coefficient $a$.
   Right: example of randomly generated nonconforming mesh with randomly assigned polynomial degrees.}
   \label{fig:random}
\end{figure}

\begin{table}
   \centering
   \caption{Convergence results for the random refinement test case with piecewise constant diffusion coefficient.}
   \label{tab:random}
   \vspace{\floatsep}
   \begin{tabular}{lcccc|cc}
      \toprule
      Refinement & \# DOFs & \makecell{Iters.\\$T_B + T_C$} & \makecell{Iters.\\$J_B + T_C$} & \makecell{Iters.\\BoomerAMG} & \# $V_e$ & $\max \dim(V_e)$ \\
      \midrule
      {Initial mesh} & 672 & 42 & 42 & 148 & 0 & ---\\
      \midrule
      {1 random} & 2{,}042 & 54 & 223 & 237 & 18 & 29\\
      {1 random, 1 uniform} & 8{,}168 & 59 & 247 & 298 & 28 & 19\\
      {1 random, 2 uniform} & 32{,}672 & 61 & 316 & 288 & 56 & 19\\
      \midrule
      {2 random (2-irregular)} & 5{,}402 & 63 & 441 & 322 & 51 & 65\\
      {2 random (1-irregular)} & 5{,}737 & 60 & 305 & 288 & 54 & 29\\
      \midrule
      {3 random (3-irregular)} & 13{,}149 & 79 & 673 & 399 & 115 & 106\\
      {3 random (1-irregular)} & 15{,}300 & 61 & 390 & 286 & 138 & 30\\
      \bottomrule
   \end{tabular}
\end{table}

\subsection{Dependence on penalty parameter}
\label{sec:penalty}

An attractive feature of the preconditioners developed in this work is that the conditioning of the preconditioned system is independent of the value of the penalty parameter $\eta$.
Generally, larger values of the penalty parameter result in systems that are worse-conditioned, and more difficult to solve using standard preconditioners and multigrid methods \cite{Castillo2002,Shahbazi2005}.
In this section, we numerically study the dependence of the preconditioner on the choice of penalty parameter.
The same mesh is used as in Section \ref{sec:random-refinement}, with one level of random refinements.
Each element of the mesh is randomly assigned a polynomial degree $5 \leq p_\k \leq 9$.
The symmetric interior penalty parameter $\eta$ is increased from 10 to 10{,}000 by factors of 10.
The resulting iteration counts are shown in Figure \ref{fig:penalty}.
We note that the preconditioned system $T = T_B + T_C$ remains uniformly well-conditioned, independent of the choice of $\eta$, whereas both the simplified preconditioner $\widetilde{B}$ and BoomerAMG result in severely degraded convergence for large values of $\eta$.

We also consider an alternative DG formulation, known as the second method of Bassi and Rebay (BR2) \cite{Bassi2000}.
The BR2 method proceeds by defining, for each edge $e\in\Gamma$, a \textit{lifting operator} $r_e : [ L^1(e) ]^2 \to [ V_h ]^2$ given by
\begin{equation}
   \int_\Omega r_e(\bm \varphi) \cdot \bm \tau \, dx
      = - \int_e \bm \varphi \cdot \{ \bm \tau \} \, ds
      \qquad \text{for all $\bm\tau \in [V_h]^2$}.
\end{equation}
The BR2 bilinear form $\A_{\BR}$ is obtained by replacing the symmetric interior penalty term $\int_\Gamma \eta \frac{p_e^2}{h_e} \llb u_h \rrb \cdot \llb v_h \rrb \, ds$ with an alternative stabilization term of the form $\sum_e \int_\Omega \eta r_e(\llb u_h \rrb) \cdot r_e(\llb v_h \rrb) \, d\bm x$.
This method has the advantage that that the penalty parameter $\eta$ can be chosen to be $\mathcal{O}(1)$, and the scaling by a factor of $p_e^2/h_e$ is not required.
However, the presence of the lifting operators $r_e$ can result in degraded convergence for multigrid methods \cite{Fortunato2019}.
It can be seen that the norm $\|\cdot\|_{\BR}$ induced by the bilinear form $\A_{\BR}$ is equivalent to the DG norm $\|\cdot\|_{\DG}$, independent of mesh size and polynomial degree \cite{Pazner2019a}.
Consequently, we expect the preconditioner $B = B_B + B_C$ to result in performance independent of the magnitude of BR2 penalization.
This agrees with the numerical results presented in Figure \ref{fig:penalty}.

\begin{figure}
   \centering
   \includegraphics{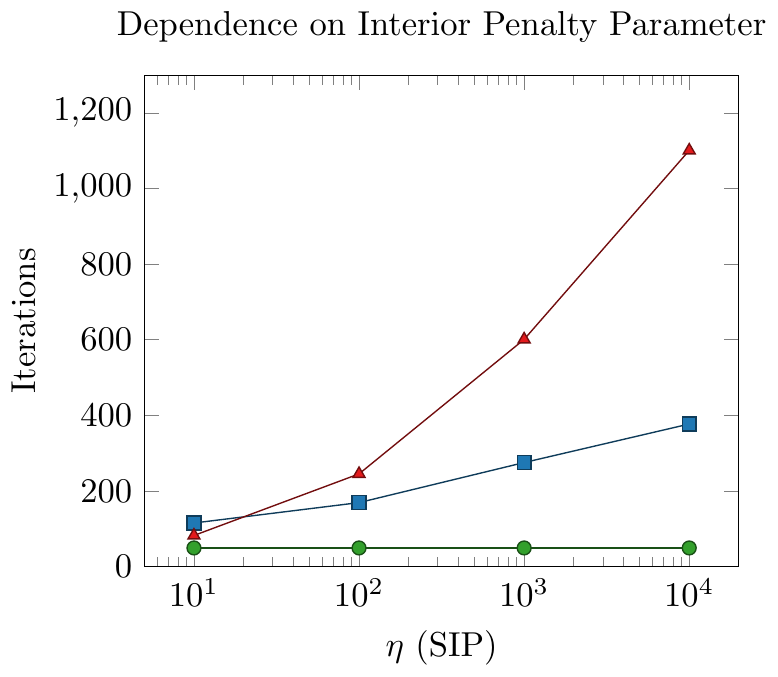}%
   \includegraphics{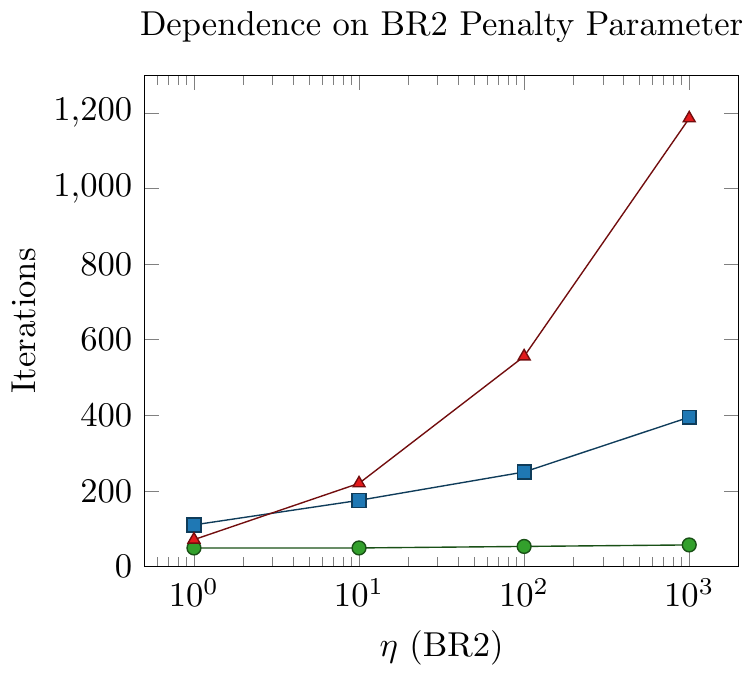}

   \vspace{\floatsep}
   \includegraphics{fig/amr_iters_legend}
   \caption{Dependence of number of conjugate gradient iterations on choice of penalty parameter.
   Left: $\int_\Gamma \eta \frac{p_e^2}{h_e} \llb u_h \rrb \cdot \llb v_h \rrb \, ds$ stabilization (symmetric interior penalty method).
   Right: $\sum_e \int_\Omega \eta r_e(\llb u_h \rrb) \cdot r_e(\llb v_h \rrb) \, d\bm x$ stabilization (BR2 method).}
   \label{fig:penalty}
\end{figure}

\section{Conclusions}
\label{sec:conclusions}

In this work we presented new preconditioners for discontinuous Galerkin methods applied to $hp$-refined meshes.
The preconditioners are based on a subspace decomposition, using a coarse space of $H^1$-conforming functions, together with subspaces corresponding to nonconforming interfaces.
%% The coarse space can be handled with standard preconditioners.
%% In this work, we use a new matrix-free low-order refined preconditioner which is shown to be spectrally equivalent to the high-order conforming problem.
For the coarse space we proposed a new matrix-free low-order refined preconditioner which is shown to be spectrally equivalent to the high-order conforming problem.
The nonconforming interface subspaces are generally small in size, and can be processed independently in parallel.
Analysis of the overall preconditioner shows that the condition number of the resulting system is independent of the mesh size, polynomial degree, and penalty parameter.
Numerical examples are presented on both adaptively refined and randomly refined meshes.
Comparisons to alternative preconditioners, including a simplified preconditioner with diagonal correction, and an algebraic multigrid preconditioner demonstrate the utility and benefits of the current approach.

\section{Acknowledgments}

This work was performed under the auspices of the U.S. Department of Energy by Lawrence Livermore National Laboratory under Contract DE-AC52-07NA27344 (LLNL-JRNL-814157).
This document was prepared as an account of work sponsored by an agency of the United States government.
Neither the United States government nor Lawrence Livermore National Security, LLC, nor any of their employees makes any warranty, expressed or implied, or assumes any legal liability or responsibility for the accuracy, completeness, or usefulness of any information, apparatus, product, or process disclosed, or represents that its use would not infringe privately owned rights.
Reference herein to any specific commercial product, process, or service by trade name, trademark, manufacturer, or otherwise does not necessarily constitute or imply its endorsement, recommendation, or favoring by the United States government or Lawrence Livermore National Security, LLC.
The views and opinions of authors expressed herein do not necessarily state or reflect those of the United States government or Lawrence Livermore National Security, LLC, and shall not be used for advertising or product endorsement purposes.

\bibliographystyle{siamplain}
\bibliography{refs}

\begin{thebibliography}{10}

\bibitem{Anderson2020}
{\sc R.~Anderson, J.~Andrej, A.~Barker, J.~Bramwell, J.-S. Camier, J.~{\v
  C}erven{\'y}, V.~Dobrev, Y.~Dudouit, A.~Fisher, T.~Kolev, W.~Pazner,
  M.~Stowell, V.~Tomov, J.~Dahm, D.~Medina, and S.~Zampini}, {\em {MFEM}: a
  modular finite element methods library}, Computers {\&} Mathematics with
  Applications,  (2020), \url{https://doi.org/10.1016/j.camwa.2020.06.009}.

\bibitem{Antonietti2007}
{\sc P.~F. Antonietti and B.~Ayuso}, {\em Schwarz domain decomposition
  preconditioners for discontinuous {G}alerkin approximations of elliptic
  problems: non-overlapping case}, {ESAIM}: Mathematical Modelling and
  Numerical Analysis, 41 (2007), pp.~21--54,
  \url{https://doi.org/10.1051/m2an:2007006}.

\bibitem{Antonietti2010}
{\sc P.~F. Antonietti and P.~Houston}, {\em A class of domain decomposition
  preconditioners for $hp$-discontinuous {G}alerkin finite element methods},
  Journal of Scientific Computing, 46 (2010), pp.~124--149,
  \url{https://doi.org/10.1007/s10915-010-9390-1}.

\bibitem{Antonietti2015}
{\sc P.~F. Antonietti, M.~Sarti, and M.~Verani}, {\em Multigrid algorithms for
  $hp$-discontinuous {G}alerkin discretizations of elliptic problems}, {SIAM}
  Journal on Numerical Analysis, 53 (2015), pp.~598--618,
  \url{https://doi.org/10.1137/130947015}.

\bibitem{Antonietti2016}
{\sc P.~F. Antonietti, M.~Sarti, M.~Verani, and L.~T. Zikatanov}, {\em A
  uniform additive {S}chwarz preconditioner for high-order discontinuous
  {G}alerkin approximations of elliptic problems}, Journal of Scientific
  Computing, 70 (2016), pp.~608--630,
  \url{https://doi.org/10.1007/s10915-016-0259-9}.

\bibitem{Arnold1982}
{\sc D.~N. Arnold}, {\em An interior penalty finite element method with
  discontinuous elements}, SIAM Journal on Numerical Analysis, 19 (1982),
  pp.~742--760, \url{https://doi.org/10.1137/0719052}.

\bibitem{Arnold2002}
{\sc D.~N. Arnold, F.~Brezzi, B.~Cockburn, and L.~D. Marini}, {\em Unified
  analysis of discontinuous {G}alerkin methods for elliptic problems}, SIAM
  Journal on Numerical Analysis, 39 (2002), pp.~1749--1779,
  \url{https://doi.org/10.1137/S0036142901384162}.

\bibitem{Bassi2000}
{\sc F.~Bassi and S.~Rebay}, {\em A high order discontinuous {G}alerkin method
  for compressible turbulent flows}, in Discontinuous Galerkin Methods,
  B.~Cockburn, G.~E. Karniadakis, and C.-W. Shu, eds., Springer Berlin
  Heidelberg, 2000, pp.~77--88,
  \url{https://doi.org/10.1007/978-3-642-59721-3_4}.

\bibitem{Bernardi2005}
{\sc C.~Bernardi, Y.~Maday, and F.~Rapetti}, {\em Basics and some applications
  of the mortar element method}, GAMM-Mitteilungen, 28 (2005), pp.~97--123,
  \url{https://doi.org/10.1002/gamm.201490020}.

\bibitem{Braess2007}
{\sc D.~Braess}, {\em Finite elements: theory, fast solvers, and applications
  in solid mechanics}, Cambridge University Press, 2007,
  \url{https://doi.org/10.1017/cbo9780511618635}.

\bibitem{Brix2014}
{\sc K.~Brix, M.~Campos~Pinto, C.~Canuto, and W.~Dahmen}, {\em Multilevel
  preconditioning of discontinuous {G}alerkin spectral element methods. part
  {I}: geometrically conforming meshes}, IMA Journal of Numerical Analysis, 35
  (2014), pp.~1487--1532, \url{https://doi.org/10.1093/imanum/dru053}.

\bibitem{Brix2008}
{\sc K.~Brix, M.~C. Pinto, and W.~Dahmen}, {\em A multilevel preconditioner for
  the interior penalty discontinuous {G}alerkin method}, SIAM Journal on
  Numerical Analysis, 46 (2008), pp.~2742--2768,
  \url{https://doi.org/10.1137/07069691x}.

\bibitem{Burman2007}
{\sc E.~Burman and A.~Ern}, {\em Continuous interior penalty $hp$-finite
  element methods for advection and advection-diffusion equations}, Mathematics
  of Computation, 76 (2007), pp.~1119--1141,
  \url{https://doi.org/10.1090/s0025-5718-07-01951-5}.

\bibitem{Canuto1994}
{\sc C.~Canuto}, {\em Stabilization of spectral methods by finite element
  bubble functions}, Computer Methods in Applied Mechanics and Engineering, 116
  (1994), pp.~13--26.

\bibitem{Canuto2010}
{\sc C.~Canuto, P.~Gervasio, and A.~Quarteroni}, {\em Finite-element
  preconditioning of {G}-{NI} spectral methods}, SIAM Journal on Scientific
  Computing, 31 (2010), pp.~4422--4451,
  \url{https://doi.org/10.1137/090746367}.

\bibitem{Canuto2006}
{\sc C.~Canuto, M.~Y. Hussaini, A.~Quarteroni, and T.~A. Zang}, {\em Spectral
  methods: fundamentals in single domains}, Springer Berlin Heidelberg, 2006,
  \url{https://doi.org/10.1007/978-3-540-30726-6}.

\bibitem{Canuto1982}
{\sc C.~Canuto and A.~Quarteroni}, {\em Approximation results for orthogonal
  polynomials in {S}obolev spaces}, Mathematics of Computation, 38 (1982),
  pp.~67--67, \url{https://doi.org/10.1090/s0025-5718-1982-0637287-3}.

\bibitem{Castillo2002}
{\sc P.~Castillo}, {\em Performance of discontinuous {G}alerkin methods for
  elliptic {PDEs}}, SIAM Journal on Scientific Computing, 24 (2002),
  pp.~524--547, \url{https://doi.org/10.1137/s1064827501388339}.

\bibitem{Cerveny2019}
{\sc J.~{\v C}erven{\'y}, V.~Dobrev, and T.~Kolev}, {\em Nonconforming mesh
  refinement for high-order finite elements}, SIAM Journal on Scientific
  Computing, 41 (2019), pp.~C367--C392,
  \url{https://doi.org/10.1137/18m1193992}.

\bibitem{Chung2013}
{\sc E.~T. Chung, H.~H. Kim, and O.~B. Widlund}, {\em Two-level overlapping
  {S}chwarz algorithms for a staggered discontinuous {G}alerkin method}, {SIAM}
  Journal on Numerical Analysis, 51 (2013), pp.~47--67,
  \url{https://doi.org/10.1137/110849432}.

\bibitem{Cockburn1998}
{\sc B.~Cockburn and C.-W. Shu}, {\em The local discontinuous {G}alerkin method
  for time-dependent convection-diffusion systems}, {SIAM} Journal on Numerical
  Analysis, 35 (1998), pp.~2440--2463,
  \url{https://doi.org/10.1137/s0036142997316712}.

\bibitem{Cockburn2001}
{\sc B.~Cockburn and C.-W. Shu}, {\em {R}unge-{K}utta discontinuous {G}alerkin
  methods for convection-dominated problems}, Journal of Scientific Computing,
  16 (2001), pp.~173--261, \url{https://doi.org/10.1023/a:1012873910884}.

\bibitem{Demkowicz2002}
{\sc L.~Demkowicz, W.~Rachowicz, and P.~Devloo}, {\em A fully automatic
  $hp$-adaptivity}, Journal of Scientific Computing, 17 (2002), pp.~117--142,
  \url{https://doi.org/10.1023/a:1015192312705}.

\bibitem{Dobrev2006}
{\sc V.~A. Dobrev, R.~D. Lazarov, P.~S. Vassilevski, and L.~T. Zikatanov}, {\em
  Two-level preconditioning of discontinuous {G}alerkin approximations of
  second-order elliptic equations}, Numerical Linear Algebra with Applications,
  13 (2006), pp.~753--770, \url{https://doi.org/10.1002/nla.504}.

\bibitem{Dryja1990}
{\sc M.~Dryja and O.~B. Widlund}, {\em Some domain decomposition algorithms for
  elliptic problems}, in Iterative Methods for Large Linear Systems, D.~R.
  Kincaid and L.~J. Hayes, eds., Academic Press, 1990, pp.~273 -- 291,
  \url{https://doi.org/https://doi.org/10.1016/B978-0-12-407475-0.50022-X}.

\bibitem{Fortunato2019}
{\sc D.~Fortunato, C.~H. Rycroft, and R.~Saye}, {\em Efficient
  operator-coarsening multigrid schemes for local discontinuous {G}alerkin
  methods}, SIAM Journal on Scientific Computing, 41 (2019), pp.~A3913--A3937,
  \url{https://doi.org/10.1137/18m1206357}.

\bibitem{Gopalakrishnan2003}
{\sc J.~Gopalakrishnan and G.~Kanschat}, {\em A multilevel discontinuous
  {G}alerkin method}, Numerische Mathematik, 95 (2003), pp.~527--550,
  \url{https://doi.org/10.1007/s002110200392}.

\bibitem{Griebel1995}
{\sc M.~Griebel and P.~Oswald}, {\em On the abstract theory of additive and
  multiplicative {S}chwarz algorithms}, Numerische Mathematik, 70 (1995),
  pp.~163--180, \url{https://doi.org/10.1007/s002110050115}.

\bibitem{Henson2002}
{\sc V.~E. Henson and U.~M. Yang}, {\em {BoomerAMG}: a parallel algebraic
  multigrid solver and preconditioner}, Applied Numerical Mathematics, 41
  (2002), pp.~155--177, \url{https://doi.org/10.1016/s0168-9274(01)00115-5}.

\bibitem{Houston2007}
{\sc P.~Houston, D.~Sch{\"o}tzau, and T.~P. Wihler}, {\em Energy norm a
  posteriori error estimation of $hp$-adaptive discontinuous {G}alerkin methods
  for elliptic problems}, Mathematical Models and Methods in Applied Sciences,
  17 (2007), pp.~33--62, \url{https://doi.org/10.1142/s0218202507001826}.

\bibitem{Houston2007a}
{\sc P.~Houston, E.~S{\"u}li, and T.~P. Wihler}, {\em A posteriori error
  analysis of $hp$-version discontinuous {G}alerkin finite-element methods for
  second-order quasi-linear elliptic {PDEs}}, IMA Journal of Numerical
  Analysis, 28 (2007), pp.~245--273,
  \url{https://doi.org/10.1093/imanum/drm009}.

\bibitem{Karakashian2003}
{\sc O.~A. Karakashian and F.~Pascal}, {\em A posteriori error estimates for a
  discontinuous {G}alerkin approximation of second-order elliptic problems},
  SIAM Journal on Numerical Analysis, 41 (2003), pp.~2374--2399,
  \url{https://doi.org/10.1137/s0036142902405217}.

\bibitem{Melenk2001}
{\sc J.~Melenk, K.~Gerdes, and C.~Schwab}, {\em Fully discrete $hp$-finite
  elements: fast quadrature}, Computer Methods in Applied Mechanics and
  Engineering, 190 (2001), pp.~4339--4364,
  \url{https://doi.org/10.1016/s0045-7825(00)00322-4}.

\bibitem{MFEMWebsite}
{\em {MFEM}: Modular finite element methods {[Software]}}.
\newblock \url{mfem.org}, \url{https://doi.org/10.11578/dc.20171025.1248}.

\bibitem{Orszag1980}
{\sc S.~A. Orszag}, {\em Spectral methods for problems in complex geometries},
  Journal of Computational Physics, 37 (1980), pp.~70--92,
  \url{https://doi.org/10.1016/0021-9991(80)90005-4}.

\bibitem{Oswald1993}
{\sc P.~Oswald}, {\em On a {BPX}-preconditioner for {P1} elements}, Computing,
  51 (1993), pp.~125--133, \url{https://doi.org/10.1007/bf02243847}.

\bibitem{Pazner2019a}
{\sc W.~Pazner}, {\em Efficient low-order refined preconditioners for
  high-order matrix-free continuous and discontinuous {G}alerkin methods}, SIAM
  Journal on Scientific Computing (In Press),  (2020).

\bibitem{Pazner2018e}
{\sc W.~Pazner and P.-O. Persson}, {\em Approximate tensor-product
  preconditioners for very high order discontinuous {G}alerkin methods},
  Journal of Computational Physics, 354 (2018), pp.~344--369,
  \url{https://doi.org/10.1016/j.jcp.2017.10.030}.

\bibitem{Peraire2008}
{\sc J.~Peraire and P.-O. Persson}, {\em The compact discontinuous {G}alerkin
  ({CDG}) method for elliptic problems}, {SIAM} Journal on Scientific
  Computing, 30 (2008), pp.~1806--1824,
  \url{https://doi.org/10.1137/070685518}.

\bibitem{Shahbazi2005}
{\sc K.~Shahbazi}, {\em An explicit expression for the penalty parameter of the
  interior penalty method}, Journal of Computational Physics, 205 (2005),
  pp.~401--407, \url{https://doi.org/10.1016/j.jcp.2004.11.017}.

\bibitem{Solin2008}
{\sc P.~{\v S}ol{\'i}n, J.~{\v C}erven{\'y}, and I.~Dole{\v z}el}, {\em
  Arbitrary-level hanging nodes and automatic adaptivity in the $hp$-{FEM}},
  Mathematics and Computers in Simulation, 77 (2008), pp.~117--132,
  \url{https://doi.org/10.1016/j.matcom.2007.02.011}.

\bibitem{Szabo1991}
{\sc B.~Szab{\'o} and I.~Babu{\v{s}}ka}, {\em Finite element analysis}, Wiley
  Series in Computational Mechanics, Wiley, 1991.

\bibitem{Toselli2005}
{\sc A.~Toselli and O.~B. Widlund}, {\em Domain decomposition methods ---
  algorithms and theory}, Springer Series in Computational Mathematics,
  (2005), \url{https://doi.org/10.1007/b137868}.

\bibitem{Wang2013}
{\sc Z.~Wang, K.~Fidkowski, R.~Abgrall, F.~Bassi, D.~Caraeni, A.~Cary,
  H.~Deconinck, R.~Hartmann, K.~Hillewaert, H.~Huynh, and et~al.}, {\em
  High-order {CFD} methods: current status and perspective}, International
  Journal for Numerical Methods in Fluids, 72 (2013), pp.~811--845,
  \url{https://doi.org/10.1002/fld.3767}.

\bibitem{Wihler2003}
{\sc T.~Wihler, P.~Frauenfelder, and C.~Schwab}, {\em Exponential convergence
  of the $hp$-{DGFEM} for diffusion problems}, Computers {\&} Mathematics with
  Applications, 46 (2003), pp.~183--205,
  \url{https://doi.org/10.1016/s0898-1221(03)90088-5}.

\bibitem{Xu1992}
{\sc J.~Xu}, {\em Iterative methods by space decomposition and subspace
  correction}, SIAM Review, 34 (1992), pp.~581--613,
  \url{https://doi.org/10.1137/1034116}.

\bibitem{Xu2001}
{\sc J.~Xu}, {\em The method of subspace corrections}, Journal of Computational
  and Applied Mathematics, 128 (2001), pp.~335--362,
  \url{https://doi.org/10.1016/s0377-0427(00)00518-5}.

\bibitem{Xu2002}
{\sc J.~Xu and L.~Zikatanov}, {\em The method of alternating projections and
  the method of subspace corrections in {H}ilbert space}, Journal of the
  American Mathematical Society, 15 (2002), pp.~573--598,
  \url{https://doi.org/10.1090/s0894-0347-02-00398-3}.

\bibitem{Zhu2011}
{\sc L.~Zhu, S.~Giani, P.~Houston, and D.~Sch{\"o}tzau}, {\em Energy norm a
  posteriori error estimation for $hp$-adaptive discontinuous {G}alerkin
  methods for elliptic problems in three dimensions}, Mathematical Models and
  Methods in Applied Sciences, 21 (2011), pp.~267--306,
  \url{https://doi.org/10.1142/s0218202511005052}.

\bibitem{Zhu2010}
{\sc L.~Zhu and D.~Sch{\"o}tzau}, {\em A robust a posteriori error estimate for
  $hp$-adaptive {DG} methods for convection-diffusion equations}, IMA Journal
  of Numerical Analysis, 31 (2010), pp.~971--1005,
  \url{https://doi.org/10.1093/imanum/drp038}.

\end{thebibliography}

\end{document}